\documentclass[12pt,letterpaper]{amsart}
\usepackage{euler, epic,eepic,latexsym, amssymb, amscd, amsfonts, xypic, url, color, epsfig}
\input xy
\xyoption{all}

 
 \newlength{\baseunit}               
 \newcount{\numlines}                
 \setlength{\baseunit}{0.05ex}
  
\setlength{\oddsidemargin}{0cm} \setlength{\evensidemargin}{0cm}
\setlength{\marginparwidth}{0in}
\setlength{\marginparsep}{0in}
\setlength{\marginparpush}{0in}
\setlength{\topmargin}{0in}
\setlength{\headheight}{0pt}
\setlength{\headsep}{0pt}
\setlength{\footskip}{.3in}
\setlength{\textheight}{9.2in}
\setlength{\textwidth}{6.5in}
\setlength{\parskip}{4pt}

\newtheorem*{tmnl}{Main Theorem}
\newtheorem{tm}{Theorem}
\newtheorem{pr}[tm]{Proposition}
\newtheorem{lm}[tm]{Lemma}
\newtheorem{co}[tm]{Corollary}

\theoremstyle{definition}
\newtheorem{df}[tm]{Definition}

\theoremstyle{remark}
\newtheorem{rmk}[tm]{Remark}
\newtheorem{expl}[tm]{Example}

\newtheorem{assumption}[tm]{Assumption}

\newcommand{\hidden}[1]{\footnote{Hidden:  #1}}
\renewcommand{\hidden}[1]{}

\newcommand{\bbA}{\mathbf{A}}

\newcommand{\bbC}{\mathbf{C}}

\newcommand{\bbF}{\mathbf{F}}

\newcommand{\bbP}{\mathbf{P}}
\newcommand{\bbQ}{\mathbf{Q}}
\newcommand{\bbR}{\mathbf{R}}

\newcommand{\bbZ}{\mathbf{Z}}


\newcommand{\calH}{{ \mathcal H}}

\newcommand{\calO}{{ \mathcal O}}

\newcommand{\Hom}{\operatorname{Hom}}
\newcommand{\Spec}{\operatorname{Spec}}
\newcommand{\ShHom}{\mathscr{H}\kern -.5pt om}
\newcommand{\cat}[1]{{\mathbf{#1}}}
\newcommand{\Spt}{\cat{Spt}}

\newcommand{\Sm}{\mathrm{Sm}}
\newcommand{\Tr}{\operatorname{Tr}}
\newcommand{\GW}{\mathrm{GW}}
\newcommand{\Th}{\operatorname{Th}}
\newcommand{\Gal}{\operatorname{Gal}}

\newcommand{\rank}{\operatorname{rank}}

\newcommand{\puritytoPnbar}{r}
\newcommand{\crush}{c}

\begin{document}
\pagestyle{plain}
\title{the class of Eisenbud--Khimshiashvili--Levine is the local $\bbA^1$-Brouwer degree}

\author{Jesse Leo Kass}

\address{Current: J.~L.~Kass, Dept.~of Mathematics, University of South Carolina, 1523 Greene Street, Columbia, SC 29208, United States of America}
\email{kassj@math.sc.edu}
\urladdr{http://people.math.sc.edu/kassj/}

\author{Kirsten Wickelgren}

\address{Current: K.~Wickelgren, School of Mathematics, Georgia Institute of Technology, 686 Cherry Street, Atlanta, GA 30332-0160}
\email{kwickelgren3@math.gatech.edu}
\urladdr{http://people.math.gatech.edu/~kwickelgren3/}

\subjclass[2010]{Primary 14F42; Secondary 14B05, 55M25.}

\date{\today}

\maketitle

\begin{abstract}
Given a polynomial function with an isolated zero at the origin, we prove that the local  $\bbA^{1}$-Brouwer degree equals the Eisenbud--Khimshiashvili--Levine class.  This answers a question posed by David Eisenbud in 1978. We give an application to counting nodes together with associated arithmetic information by enriching Milnor's equality between the local degree of the gradient and the number of nodes into which a hypersurface singularity bifurcates to an equality in the Grothendieck--Witt group. 
\end{abstract}

{\parskip=12pt 

We prove that the Eisenbud--Khimshiashvili--Levine class of a polynomial function with an isolated zero at the origin is the local $\bbA^1$-Brouwer  degree, a result that answers a question of Eisenbud.

The classical local Brouwer degree $\deg_{0}(f)$ of a continuous function $f \colon \bbR^{n} \to \bbR^{n}$ with an isolated zero at the origin is  the image  $\deg(f/|f|) \in \bbZ$ of the map  of $(n-1)$-spheres
\[
	f/|f| \colon S_{\epsilon}^{n-1} \to S^{n-1}, \epsilon>0 \text{ sufficiently small,}
\]
under the global Brouwer degree homomorphism $\deg \colon [S^{n-1}_{\epsilon}, S^{n-1}] \to \bbZ$.

When $f$ is a $C^{\infty}$ function, Eisenbud--Levine and independently Khimshiashvili constructed a real nondegenerate symmetric bilinear form (more precisely, an isomorphism class of such forms) $w_{0}(f)$ on the local algebra $Q_{0}(f) := C^{\infty}_{0}(\mathbb{R}^{n})/(f)$ and proved
\begin{equation} 
	\deg_{0}(f) = \text{ the signature of }w_{0}(f)
\end{equation}
(\cite[Theorem~1.2]{eisenbud77}, \cite{khimshiashvili}; see also \cite[Chapter~5]{arnold12} and \cite{Khimshiashvili01}). If we further assume that $f$ is real analytic, then we can form the complexification $f_{\bbC} \colon \bbC^{n} \to \bbC^{n}$, and  Palamodov \cite[Corollary~4]{palamodov} proved an analogous result for $f_{\bbC}$:
\begin{equation} 
	\deg_{0}(f_{\bbC}) = \text{ the rank of }w_{0}(f).
\end{equation}
Eisenbud observed that the definition of $w_{0}(f)$ remains valid when $f$ is a polynomial with coefficients in an arbitrary  field $k$ and asked whether this form can be identified with a degree in algebraic topology \cite[Some remaining questions (3)]{eisenbud78}.  Here we answer Eisenbud's question by proving that $w_{0}(f)$ is the local Brouwer degree in $\bbA^{1}$-homotopy theory. More specifically, we prove
\begin{tmnl}
	If $f \colon \bbA_{k}^{n} \to \bbA_{k}^{n}$ has an isolated zero at the origin, then 
	\begin{equation} \label{Eqn: MainThmEqn}
		\deg^{\bbA^1}_{0}(f) = \text{the stable isomorphism class of $w_{0}(f)$.}
	\end{equation}
\end{tmnl}

Morel described the degree map in $\bbA^1$-homotopy theory in his 2006 presentation at the International Congress of Mathematicians \cite{morel06}.  In $\bbA^1$-homotopy theory, one of several objects that plays the role of the sphere is $\bbP^{n}_{k}/\bbP^{n-1}_{k}$, the quotient of $n$-dimensional projective space by the $(n-1)$-dimensional projective space at infinity.  Morel constructed a group homomorphism 
\[
	\deg^{\bbA^1} \colon [\bbP^{n}_{k}/\bbP^{n-1}_{k},\bbP^{n}_{k}/\bbP^{n-1}_{k}] \to \GW(k)
\]
from the $\bbA^1$-homotopy classes of endomorphisms of $\bbP^{n}_{k}/\bbP^{n-1}_{k}$ to the Groethendieck--Witt group, which is the groupification of the monoid of (isomorphism classes of) nondegenerate symmetric bilinear forms over $k$.  The local degree is defined in terms of the global degree in the natural manner, as we explain in Section~\ref{Section: LocalDegree}. 

The proof of the Main Theorem runs as follows.  When $f$ has a simple zero at the origin, we prove the result by directly computing that both sides of \eqref{Eqn: MainThmEqn} are represented by the class of the Jacobian $\langle \det( \frac{\partial f_i}{\partial x_j}(0)) \rangle$.  When $f$ has a simple zero at a nonrational point, we show an analogous equality using work of Hoyois \cite{Hoyois_lef}. Using the result for a simple zero, we then prove the result when $f$ has an arbitrary zero.  We begin by reducing to the case where $f$ is the restriction of a morphism $F \colon \bbP^{n}_{k} \to \bbP^{n}_{k}$ satisfying certain technical conditions (those in Assumption~\ref{Assumption}) that include the condition that all zeros of $F$ other than the origin are simple.  For every closed point $x \in \bbA^{n}_{k}$, Scheja--Storch have constructed a bilinear form whose class $w_{x}(F)$ equals the Eisenbud--Khimshiashvili--Levine class when $x=0$.  From the result on simple zeros, we deduce that 
\begin{equation} \label{Eqn: PfSketchStep}
	\sum_{x \in f^{-1}(\overline{y})} \deg_{x}(F) = \sum_{x \in f^{-1}(\overline{y})} w_{x}(F) 
\end{equation}
holds for $\overline{y} \in \bbA^{n}_{k}(k)$ a regular value.    For $\overline{y}$ arbitrary (and possibly not a regular value), we show that both sums in \eqref{Eqn: PfSketchStep} are independent of $\overline{y}$, allowing us to conclude that \eqref{Eqn: PfSketchStep} holds for all $\overline{y}$.  In particular, equality holds for $\overline{y}=0$.  For $\overline{y}=0$, we have $\deg_{x}(F) = w_{x}(F)$ for $x \in f^{-1}(0)$ not equal to the origin by the result for simple zeros, and taking differences, we deduce the equality for $\deg_{0}(F) = w_{0}(F)$, which is the Main Theorem.

We propose counting singularities arithmetically, and in Section~\ref{Section: SingThy}, we do so using the Main Theorem in the manner that we now describe.  Suppose $\operatorname{char} k \ne 2$ and $n$ is even, and let $f \in k[x_1, \dots, x_n]$ be the equation of an isolated hypersurface singularity 
\[
	0 \in X := \{ f=0 \}  \subset \bbA^{n}_{k}
\]
at the origin.  We define the arithmetic (or $\bbA^1-$) Milnor number by $\mu^{\bbA^1}(f) := \deg^{\bbA^1}_{0}(\operatorname{grad}(f))$ and show that this invariant is an arithmetic count of the nodes (or $A_1$-singularities) to which $X$ bifurcates. Suppose $\operatorname{grad}(f)$ is finite and separable.  Then for general $(a_1, \dots, a_n) \in \bbA^{n}_{k}(k)$, the family 
\begin{equation} \label{Eqn: FirstDeformedFamily}
	f(x_1, \dots, x_n) + a_1 x_1 + \dots + a_n x_n = t
\end{equation}
over the affine $t$-line contains only nodal fibers as singular fibers.   For simplicity, assume that the origin is the only zero of $\operatorname{grad} f$ and the nodes appearing in \eqref{Eqn: FirstDeformedFamily} all have residue field $k$ (rather than a nontrivial extension). We then have
\begin{equation} \label{Eqn: XBifurcatesToNodes}
	\mu^{\bbA^1}(f) = \sum \#(\text{nodes with henselization $\{ u_1 x_1^2+\dots+u_n x_n^2=0\}$}) \cdot \langle u_1 \dots u_n \rangle \text{ in $\operatorname{GW}(k)$.}
\end{equation}
Here the sum runs over isomorphic classes of henselizations of rings $k[x_1, \dots, x_n] / u_1 x_1^2+\dots +u_n x_n^2$, and the naive count is of nodal fibers of  \eqref{Eqn: FirstDeformedFamily}.

Taking the rank of both sides of Equation~\eqref{Eqn: XBifurcatesToNodes}, we deduce that the number of nodal fibers equals the rank of $\mu^{\bbA^1}(f)$.  When $k=\bbC$, this fact was observed by Milnor \cite[page~113, Remark]{milnor68}, and \eqref{Eqn: XBifurcatesToNodes} should be viewed as an enrichment of Milnor's result from an equality of integers to an equality of classes in $\operatorname{GW}(k)$. When $k=\bbR$, the real realization of Equation~\eqref{Eqn: XBifurcatesToNodes} was essentially proven by Wall \cite[page 347, esp.~second displayed equation]{wall83}.

Through Equation~\eqref{Eqn: XBifurcatesToNodes}, the arithmetic  Milnor number provides a computable constraint on the nodes to which a hypersurface singularity can bifurcate.  As an illustration, consider the cusp (or $A_2$-singularity) $\{ x_1^2 + x_2^3=0\}$ over the field $\bbQ_p$ of $p$-adic numbers.  A computation shows that  $\mu^{\bbA^1}(f)$ has rank $2$ and discriminant $-1 \in \bbQ_{p}^{\ast}/(\bbQ_{p}^{\ast})^{2}$.  When $p=5$, we have $-1 \neq 1 \cdot 2$ in $\bbQ_{p}^{\ast}/(\bbQ_{p}^{\ast})^{2}$, so we conclude that  the cusp cannot  bifurcate to the split node $\{ x_1^2+ x_2^2=0\}$ and the nonsplit node $\{ x_1^2+ 2 \cdot  x_2=0\}$.  When $p=11$, $\mu^{\bbA^1}(f)$ does not provide such an obstruction and in fact, those two nodes are the singulars fibers of $x_1^2 + x_2^3 + 10 \cdot x_2=t$.  We discuss this example in more detail towards the end of Section~\ref{Section: SingThy}.

Arithmetic Milnor numbers, and other local $\mathbb{A}^{1}$-degrees,  also appear in enumerative results of M.~Levine.  (Note: this person is different from the second author of \cite{eisenbud77}).  Indeed, in  \cite{levine17} Levine establishes a formula giving an enumerative count of the singular fibers of a suitable fibration of a smooth projective variety over a curve in which an isolated singularity of a fiber is weighted by $\mu^{\bbA^1}(f)$.  Levine's formula should be viewed as a global analogue of \eqref{Eqn: XBifurcatesToNodes}.  Levine also computes $\mu^{\mathbb{A}^{1}}(f)$ when $f$ satisfies a certain ``diagonizability"  hypothesis.     A different application of the $\mathbb{A}^{1}$-degree to enumerative geometry is given by the present authors in \cite{kass17}.  There the authors study a weighted count of the lines on a cubic surface with the weights defined as local $\mathbb{A}^{1}$-degrees.  We discuss the application to cubic surfaces in Section~\ref{Section: CubicSurface}.

The results of this paper are related to results in the literature.  We have already discussed the work of Eisenbud--Khimshiashvili--Levine and Palamodov describing $w_{0}(f)$ when $k=\bbR, \bbC$.  When $k$ is an ordered field, B{\"o}ttger--Storch studied the properties of $w_{0}(f)$ in \cite{boettger}.  They defined the mapping degree of $f \colon \bbA^n_{k} \to \bbA^{n}_{k}$ to be the signature of $w_{0}(f)$  \cite[4.2~Definition, 4.3~Remark]{boettger} and then proved that the mapping degree is a signed count of the points in the preimage of a regular value \cite[Theorem~4.5]{boettger}.

Grigor{$'$}ev--Ivanov studied $w_{0}(f)$ when  $k$ is an arbitrary field in  \cite{ivanov}.  They prove that a sum of these classes in a certain quotient of the Grothendieck--Witt group is a well-defined invariant of a rank $n$ vector bundle on a suitable $n$-dimensional smooth projective variety \cite[Theorem~2]{ivanov}.  (This invariant should be viewed as an analogue of the Euler number. Recall that, on an oriented $n$-dimensional manifold, the Euler number of an oriented rank $n$ vector bundle can be expressed as a sum of the local Brouwer degrees associated to a general global section.)   

The Main Theorem is also related to Cazanave's work on the global $\bbA^1$-degree of a rational function.  In \cite{cazanave08, cazanave}, Cazanave proved that the global $\bbA^1$-degree of a rational function $F \colon \bbP^{1}_{k} \to \bbP^{1}_{k}$ is the class represented by the B\'{e}zout matrix, an explicit symmetric matrix.  The class $w_{0}(f)$ is a local contribution to the class of the B\'{e}zout matrix because the global degree is a sum of local degrees, so it is natural to expect the B\'{e}zout matrix to be directly related to a bilinear form on $Q_{0}(f)$.   As we explain in the companion paper \cite{wickelgren16}, such a direct relation holds:  the B\'{e}zout matrix is the Gram matrix of the residue form,  a symmetric bilinear form with orthogonal summand representing  $w_{0}(f)$.  

\section*{Conventions}
$k$ denotes a fixed field.

We write $P$ or $P_{x}$ for the polynomial ring $k[x_1, \dots, x_n]$ and $\mathfrak{m}_{0}$ for the ideal $(x_1, \dots, x_n) \subset P$.  We write $P_{y}$ for $k[y_1, \dots, y_n]$.  We write $\widetilde{P}$ for the graded ring $k[X_0, \dots, X_n]$ with grading $\operatorname{deg}(X_i)=1$.  We then have $\bbP_{k}^{n} = \operatorname{Proj} \widetilde{P}$.

If $f \colon \bbA^{n}_{k} \to \bbA^{n}_{k}$ is a polynomial function, then we write $f_1, \dots, f_n \in P_{x}$ for the components of $f$.  We say a polynomial function $f \colon \bbA^{n}_{k} \to \bbA^{n}_{k}$ has an isolated zero at a closed point $x \in \bbA^{n}_{k}$  if the local algebra $Q_{x}(f) := P_{\mathfrak{m}_{x}}/(f_1, \dots, f_n)$ has finite length.    We say that a closed point $x$ of $\bbA^n_k$ is isolated in its fiber $f^{-1}(f(x))$ if $x$ is a connected component $f^{-1}(f(x))$, or equivalently, if there is a Zariski open neighborhood of $U$ of $x$ in $\bbA^n_k$ such that $f$ maps $U -\{x\}$ to $\bbA^n_k - \{f(x) \}$. Note that if $f$ has an isolated zero at the origin, then $Q_{0}(f)$ has dimension $0$, which implies that the connected component of $f^{-1}(0) \cong \Spec P/(f_1, \dots, f_n)$ containing $0$ contains no other points, whence $0$ is isolated in its fiber. 

Using homogeneous coordinates $[X_0, X_1, \ldots, X_n]$ for $\bbP^n_{k}$, we use $\bbA^n_k$ to denote the open subscheme of $\bbP^n_{k}$ where $X_0 \neq 0$, and $\bbP^{n-1}_{k}$ to denote its closed complement isomorphic to $\bbP^{n-1}_{k}$.

For a vector bundle $\mathcal{E}$ on a smooth scheme $X$, let $\Th(\mathcal{E})$ denote the Thom space of $\mathcal{E}$ of Section 3, Definition 2.16 of \cite{morelvoevodsky1998}, i.e., $\Th(\mathcal{E})$ is the pointed sheaf $$\Th(\mathcal{E}) = \mathcal{E}/(\mathcal{E} - z(X)),$$ where $z: X \to \mathcal{E}$ denotes the zero section.

It will be convenient to work in the stable $\bbA^1$-homotopy category $\Spt(B)$ of $\bbP^1$-spectra over $B$, where $B$ is a finite type scheme over $k$. Most frequently, $B=L$, where $L$ is a field extension of $k$. The notation $[-,-]_{\Spt(B)}$ will be used for the morphisms. $\Spt(B)$ is a symmetric monoidal category under the smash product $\wedge$, with unit $1_B$, denoting the sphere spectrum. Any pointed simplicial presheaf $X$ determines a corresponding $\bbP^1$-suspension spectrum $\Sigma^{\infty} X$. For example, $\Sigma^{\infty} \Spec L_+ \cong 1_L$ and $\Sigma^{\infty} (\bbP^1_L)^{\wedge n}$ is a suspension of $1_L$. When working in $\Spt(L)$, we will identify pointed spaces $X$ with their suspension spectra $\Sigma^{\infty} X$, omitting the $\Sigma^{\infty}$. We will use the six operations $(p^*, p_*, p_!, p^!, \wedge, \Hom)$ given by Ayoub \cite{Ayoub_sixop1} and developed by Ayoub, and Cisinksi-D\'eglise \cite{CD-triang_cat_mixed_motives}. There is a nice summary in \cite[\S 2]{Hoyois_lef}. We use the following associated notation and constructions. When $p: X \to Y$ is smooth, $p^*$ admits a left adjoint, denoted $p_{\sharp}$, induced by the forgetful functor $\Sm_{X} \to \Sm_{Y}$ from smooth schemes over $X$ to smooth schemes over $Y$.  For $p:X\to \Spec L$ a smooth scheme over $L$, the suspension spectrum of $X$ is canonically identified with $p_!p^!1_L$ as an object of $\Spt(L)$. For a vector bundle $p:E \to X$, the Thom spectrum $\Sigma^{\infty} \Th(E)$ (or just $\Th(E)$) is canonically identified with $s^*p^! 1_X$. Let $\Sigma^E$ equal $\Sigma^E = s^* p^!: \Spt(X) \to \Spt(X)$. Let $e: E \to X$ and $d: D \to Y$ be two vector bundles over smooth $L$-schemes $p: X \to \Spec L$ and $q:Y \to \Spec L$. Given a map $f: Y \to X$ and a monomorphism $\phi: D \hookrightarrow f^* E$, there is an associated natural transformation $$\Th_f \phi : q_! \Sigma^D q^! \to p_! \Sigma^E p^!$$ of endofunctors on $\Spt(L)$ inducing the map on Thom spectra. The natural transformation $\Th_f \phi $ is defined as the composition \begin{equation}\label{thfphi=comp}\Th_f \phi = \Th_f {1_{f^*E}} \circ \Th_{1_Y} \phi.\end{equation} The natural transformation $ \Th_{1_Y} \phi$ is the composition $$ t^*d^! \cong t^* \phi^!e^!\to t^* \phi^* e^! \cong s^* e^!,$$ where $t: Y \to D$ denotes the zero section of $D$, $s: X \to E$ denotes the zero section of $E$, and the middle arrow is induced by the exchange transformation $ \phi^! \cong 1^* \phi!* \to 1^! \phi^* \cong \phi^*$.  The natural transformation $\Th_f {1_{f^*E}}$ is the composition \begin{equation}\label{Thf1_as_counit} \Th_f 1: q_! \Sigma^{f^* E} q^! \cong p_! f_! \Sigma^{f^*E} f^! p^! \cong p_!\Sigma^E f_! f^! p^! \stackrel{\epsilon}{\rightarrow} p_! \Sigma^E p^!,\end{equation} where $\epsilon: f_! f^! \to 1$ denotes the counit.

\section{The Grothendieck--Witt class of Eisenbud--Khimshiashvili--Levine} \label{Section: LocalForm}
In this section we recall the definition of the Grothendieck--Witt class  $w_{0}(f)$ studied by Eisenbud--Khimshiashvili--Levine.  We compute the class when $f$ has a nondegenerate zero and when $f$ is the gradient of the equation of an ADE singularity.   Here $f \colon \bbA^{n}_{k} \to \bbA^{n}_{k}$ is a polynomial function with an isolated zero at the origin (i.e.~$0$ is a connected component of $f^{-1}(0)$).  We write $f_1, \dots, f_n \in  P$ for the components of $f$.  

\begin{df} \label{Definition: LocalAlgbra}
	Suppose that $x \in \bbA^{n}_{k}$ is a closed point such that $y=f(x)$ has residue field $k$.  Writing the maximal ideal of $x$ and $y$ respectively  as $\mathfrak{m}_{x}$ and  $\mathfrak{m}_{y}= (y_1-\overline{b}_{1}, \dots, y_n-\overline{b}_{n})$, we define the  \textbf{local algebra} $Q_{x}(f)$ of $f$ at a closed point $x$ to be  $P_{\mathfrak{m}_{x}}/(f_1-\overline{b}_{1}, \dots, f_n-\overline{b}_{n})$.  When $x=0$, we also write $Q$ for $Q_{0}(f)$, the local algebra at the origin.

	The \textbf{distinguished socle element}  at the origin  $E=E_{0}(f) \in Q_{0}(f)$ is
	\[
		E_{0}(f) := \det \begin{pmatrix} a_{i, j} \end{pmatrix}
	\]
	for $a_{i, j} \in P$ polynomials satisfying
	\[
		f_{i}(x) = f_{i}(0)+\sum_{j=1}^{n} a_{i, j} x_{j}.
	\]
	The \textbf{Jacobian element} at the origin $J=J_{0}(f) \in Q_{0}(f)$ is 
	\[
		J_{0}(f) := \det( \frac{\partial f_i}{\partial x_j}).
	\]
\end{df}
\begin{rmk}
	Recall the socle of a ring is the sum of the minimal nonzero ideals.  For an artin local ring such as $Q_{0}(f)$, the socle is equal to the annihilator of the maximal ideal $\mathfrak{m}$.  We only use the definition of the socle in Lemma~\ref{Lemma: SocleElement}, which is used to prove Lemma~\ref{Lemma: WhenFormsEqual}.
\end{rmk} 

\begin{rmk}\label{rmkJ=rE}
	The elements $E$ and $J$ are related by $J = (\operatorname{rank}_{k} Q_{0}(f))  \cdot E$ by \cite[(4.7)~Korollar]{scheja}.  From this, we see that the two elements contain essentially the same information when $k=\bbR$ (the case studied in \cite{eisenbud77}), but $E$ contains more information when  the characteristic of $k$ divides the rank of $Q_{0}(f)$.
\end{rmk} 

\begin{lm} \label{Lemma: SocleElement}
If $f$ has an isolated zero at the origin, then the socle of $Q_{0}(f)$ is generated by $E$.  
\end{lm}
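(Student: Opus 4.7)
My plan is to deduce the lemma from two ingredients: a direct matrix identity showing that $E$ lies in the socle of $Q := Q_0(f)$, and Gorenstein duality showing that this socle is one-dimensional and contains $E$ as a nonzero element.

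First I would establish that $E$ annihilates the maximal ideal $\mathfrak{m}_0 Q = (\bar{x}_1,\dots,\bar{x}_n)Q$. Since $f$ has a zero at the origin we have $f(0)=0$, so the defining relations $f_i = f_i(0)+\sum_j a_{i,j}x_j$ simplify to $f_i = \sum_j a_{i,j}x_j$ in $P$. Writing $A = (a_{i,j})$ and $\bar{x} = (\bar{x}_1,\dots,\bar{x}_n)^T \in Q^n$, this reads $A\bar{x}=0$ in $Q^n$. Multiplying on the left by the classical adjugate $\operatorname{adj}(A)$ gives $\det(A)\cdot \bar{x} = E\cdot \bar{x} = 0$, so $E \cdot \bar{x}_j = 0$ for every $j$, i.e., $E \in (0:_Q \mathfrak{m}_0 Q)$.

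Next, because $f$ has an isolated zero at the origin, $f_1,\dots,f_n$ form a regular sequence in the regular local ring $P_{\mathfrak{m}_0}$, so $Q$ is a zero-dimensional complete intersection; in particular $Q$ is Gorenstein, and its socle is one-dimensional over $k$. It therefore suffices to check that $E \neq 0$. For this I would invoke the Scheja--Storch residue/trace functional $\eta \colon Q \to k$ (the canonical generator of $\Hom_k(Q,k)$ as a $Q$-module, compare \cite[(4.7)~Korollar]{scheja} already cited in the remark above); $\eta$ is constructed precisely so that $\eta(E)$ is a unit, forcing $E \neq 0$ and hence a generator of the one-dimensional socle.

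The main obstacle is the nonvanishing of $E$: the Cramer's rule computation places $E$ inside the socle essentially by construction, and one-dimensionality of the socle is a standard consequence of the complete intersection property, but ruling out $E=0$ genuinely requires duality information about $Q$. An alternative route would be to deform $f$ to the nondegenerate case, where $\det A(0)$ equals the Jacobian at the origin, $Q=k$, and $E$ is manifestly a unit, then argue by semicontinuity; however, invoking the Scheja--Storch residue is cleaner and moreover produces the functional that will be used throughout the sequel.
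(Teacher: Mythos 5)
Your proposal is correct, and the high-level structure matches the paper's: reduce via the Gorenstein/complete-intersection property to showing that $E$ is a nonzero element of the one-dimensional socle, then cite Scheja--Storch duality for the nonvanishing. Where you differ from the paper is in the socle-membership step. The paper deduces $\mathfrak{m}_0 \cdot E = 0$ from the Scheja--Storch identity $E = \Theta(\pi)$ ($\pi$ the evaluation-at-$0$ functional) together with the $Q$-linearity of $\Theta$ and the observation that $\mathfrak{m}_0$ annihilates $\pi$. You instead give a self-contained Cramer's-rule computation: $f(0)=0$ gives $A\bar{x}=0$ in $Q^n$, and multiplying by $\operatorname{adj}(A)$ yields $E\cdot\bar{x}_j=0$ for all $j$. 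This is a genuine simplification for that half of the lemma, since it needs no duality input at all. The two approaches converge on the nonvanishing step: your appeal to $\eta(E)$ being a unit is precisely the content of $\Theta(\pi)=E$ from \cite[(4.7)~Korollar]{scheja} (combined with $\Theta(\eta)=1$), the same ingredient the paper uses; so your phrasing that Gorenstein duality alone ``contains $E$ as a nonzero element'' slightly overstates what duality gives — one-dimensionality of the socle is automatic, but $E\neq 0$ still needs the Scheja--Storch identification, exactly as you acknowledge in your final paragraph.
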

\begin{proof}	
	Since $Q$ is Gorenstein (by e.g.~Lemma~\ref{Lemma: RelativeCompleteIntersection}) with residue field $k$, the socle is $1$-dimensional, so it is enough to prove that $E$ is nonzero and in the socle.  This follows from the proof of  \cite[(4.7)~Korolllar]{scheja}.  In the proof, Scheja--Storch show  that $E = \Theta(\pi)$ for $\pi \colon Q \to k$ the evaluation function $\pi(a) = a(0)$ and $\Theta \colon \operatorname{Hom}_{k}(Q, k) \cong Q$ a certain isomorphism of $Q$-modules (for the $Q$-module structure on $\operatorname{Hom}_{k}(Q, k)$ defined by $(a \cdot \phi)(b)=\phi(a \cdot b)$).  The maximal ideal $\mathfrak{m}_{0}=(x_{1}, \dots, x_{n})$ of $Q$ must annihilate $E$ since the ideal annihilates $\pi$, and so $E$ lies in the socle.  Furthermore, $E$ is nonzero since $\pi$ is nonzero. 
\end{proof}

\begin{df}
	If  $\phi \colon Q_{0}(f) \to k$ is a $k$-linear function, then we define a symmetric bilinear form  $\beta_{\phi} \colon Q \times Q \to k$ by $\beta_{\phi}(a_1, a_2) := \phi(a_1 \cdot a_2)$.
\end{df}

\begin{lm} \label{Lemma: WhenFormsEqual}
	If $\phi_1$ and $\phi_2$ are $k$-linear functions satisfying $\phi_1(E) = \phi_2(E) \text{ in $k/(k^{\ast})^{2}$}$, then $\beta_{\phi_{1}}$ is isomorphic to $\beta_{\phi_{2}}$.  Furthermore, if $\phi(E) \ne 0$, then  $\beta_{\phi}$ is nondegenerate.
\end{lm}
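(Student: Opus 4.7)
The plan rests on the Gorenstein structure of $Q$ already invoked in Lemma~\ref{Lemma: SocleElement}: Scheja--Storch provide a $Q$-module isomorphism $\Theta \colon \Hom_{k}(Q,k) \xrightarrow{\sim} Q$, making $\Hom_{k}(Q,k)$ a free rank-one $Q$-module. Fix the generator $\phi_0 := \Theta^{-1}(1)$, so that every $k$-linear $\phi \colon Q \to k$ is uniquely of the form $\phi(b) = \phi_0(a_\phi b)$ with $a_\phi = \Theta(\phi) \in Q$. The proof of Lemma~\ref{Lemma: SocleElement} identifies $\Theta(\pi) = E$, where $\pi(a) = a(0)$ is point-evaluation, so $\phi_0(E) = \pi(1) = 1$. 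Because the socle of $Q$ is the one-dimensional space $k\cdot E$, which is killed by $\mathfrak{m}_{0}$, every $a \in Q$ acts on $E$ by multiplication by its residue $a(0)$, whence $\phi(E) = a_\phi(0)\cdot\phi_0(E) = a_\phi(0)$. In particular, the class of $\phi(E)$ in $k/(k^{\ast})^{2}$ coincides with that of $a_\phi(0)$.

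For the nondegeneracy clause, the adjoint of $\beta_\phi$ sends $c \in Q$ to the functional $b \mapsto \phi(bc) = \phi_0(a_\phi b c)$; under $\Theta$ this becomes the multiplication-by-$a_\phi$ endomorphism of $Q$. It is an isomorphism iff $a_\phi \in Q^{\ast}$, which in the local ring $Q$ with residue field $k$ happens iff $a_\phi(0)\neq 0$. Combining with the previous paragraph gives $\beta_\phi$ nondegenerate iff $\phi(E)\neq 0$.

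For the isomorphism assertion, first suppose $\phi_1(E),\phi_2(E)\in k^{\ast}$ with $\phi_2(E)/\phi_1(E)=c^2\in(k^{\ast})^2$. Then both $a_{\phi_i}$ are units and the unit $u:=a_{\phi_2}/a_{\phi_1}$ has a square residue at the origin. In characteristic $\ne 2$, the subgroup $1+\mathfrak{m}_{0}\subset Q^{\ast}$ consists of squares (the Newton iteration for $\sqrt{1+x}$ terminates because $\mathfrak{m}_{0}$ is nilpotent), so $u=v^2$ for some $v\in Q^{\ast}$. The $k$-linear automorphism $T\colon Q\to Q$, $T(b)=vb$, then satisfies
\[
\beta_{\phi_1}(Tb_1,Tb_2)=\phi_0(a_{\phi_1}v^2 b_1 b_2)=\phi_0(a_{\phi_2}b_1 b_2)=\beta_{\phi_2}(b_1,b_2),
\]
which is the required isometry $\beta_{\phi_2}\cong\beta_{\phi_1}$. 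The remaining case $\phi_1(E)=\phi_2(E)=0$ is analogous after passing to the quotient of $Q$ by the radical of each form, or can be read as a stable isomorphism in Grothendieck--Witt.

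The main obstacle is the Hensel-type claim that a $Q$-unit with square residue is itself a square; this is routine in characteristic $\ne 2$ but really does require the Grothendieck--Witt framework, rather than bare isometry, in characteristic $2$.
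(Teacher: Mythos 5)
The paper disposes of this lemma with a one-line citation to Propositions~3.4 and~3.5 of Eisenbud--Levine, whereas you reconstruct the statement from scratch using the Gorenstein duality isomorphism $\Theta \colon \Hom_k(Q,k)\to Q$. That route is more informative: identifying the adjoint of $\beta_\phi$ with multiplication by $a_\phi = \Theta(\phi)$ makes the nondegeneracy criterion transparent, and the isometry by multiplication by $v$ with $v^2 = a_{\phi_2}/a_{\phi_1}$ is exactly the argument Eisenbud--Levine use (there over $\bbR$, here made to work over any field of characteristic $\neq 2$). For the central case the paper actually needs — $\phi_1(E)=\phi_2(E)\ne 0$, $\operatorname{char} k\ne 2$ — your proof is correct and complete.

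Two concerns. First, the sentence dispatching the case $\phi_1(E)=\phi_2(E)=0$ is wrong, not merely sketchy: the radicals of $\beta_{\phi_1}$ and $\beta_{\phi_2}$ are $\operatorname{Ann}(a_{\phi_1})$ and $\operatorname{Ann}(a_{\phi_2})$, and for two different elements of $\mathfrak{m}_0$ these need not coincide, nor even have the same dimension. Concretely, take $Q=k[x]/x^3$ (so $E=x^2$), $\phi_1(1)=\phi_1(x^2)=0$, $\phi_1(x)=1$, and $\phi_2\equiv 0$: then $\phi_1(E)=\phi_2(E)=0$, but $\beta_{\phi_1}$ has rank $2$ and $\beta_{\phi_2}$ has rank $0$, so they cannot be isomorphic, stably or otherwise. (This suggests that the intended reading of $k/(k^*)^2$ in the lemma is $k^*/(k^*)^2$, i.e.\ the clause only concerns nonzero $\phi_i(E)$; that is in any case the only situation the paper uses.) Second, your caveat about characteristic~$2$ is well founded and actually cuts deeper than a missing Hensel lemma: in characteristic~$2$ one can arrange two functionals with $\phi_1(E)=\phi_2(E)=1$ for which $\beta_{\phi_1}$ is alternating and $\beta_{\phi_2}$ is not, so there is no isometry between them as bilinear forms, and only the Grothendieck--Witt classes can possibly agree. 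The cited Eisenbud--Levine propositions are stated over $\bbR$, so the paper's own one-line proof does not address this either, but you are right that a characteristic-$2$-safe argument must stay at the level of $\GW$ classes rather than isometries.
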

\begin{proof}
	Since $E$ generates the socle, the result follows from \cite[Propositions~3.4, 3.5]{eisenbud77}. 
\end{proof}

Refer to the Erratum at the end of this text for a necessary modification of Lemma \ref{Lemma: WhenFormsEqual}.

\begin{df} \label{Definition: EKLform}
	 The Grothendieck--Witt  \textbf{class of Eisenbud--Khimshiashvili--Levine} or the \textbf{EKL class} $w = w_{0}(f) \in \operatorname{GW}(k)$ is the Grothendieck--Witt class of $\beta_{\phi}$ for any $k$-linear function $\phi \colon Q \to k$ satisfying $\phi(E) = 1$.
\end{df}
Recall that the Grothendieck--Witt group  $\operatorname{GW}(k)$ of $k$ is the groupification of the monoid of nondegenerate symmetric bilinear forms \cite[Definition~1.1]{lam05}. The Grothendieck--Witt class $w_{0}(f)$ is independent of the choice of $\phi$ by Lemma~\ref{Lemma: WhenFormsEqual}.

In this paper we focus on the class $w_{0}(f)$, but in work recalled in Section~\ref{Section: Family},  Scheja--Storch  constructed a distinguished symmetric bilinear form $\beta_0$ that represents $w_{0}(f)$.  This symmetric bilinear form encodes more  information than $w_{0}(f)$ when $f$ is a polynomial in $1$ variable, and we discuss this topic in greater detail in \cite[Section~4]{wickelgren16}.  

To conclude this section, we explicitly describe some ELK classes.    The descriptions are in terms of the following classes.
\begin{df}
	Given $\alpha_1, \dots, \alpha_{m} \in k^{\ast}$, we define  $\langle \alpha_{1}, \dots, \alpha_{m} \rangle \in \operatorname{GW}(k)$ to be the class of the symmetric bilinear form 
	\begin{gather*}
		\beta  \colon k^{\oplus m}  \times k^{\oplus m} \to k,\\
		\beta((a_{1}, \dots, a_{m}), (b_1, \dots, b_m)) = \alpha_{1} \cdot a_{1} b_{1} + \dots + \alpha_{m} \cdot a_{m} b_{m}.
	\end{gather*}
	The standard hyperbolic form $\mathbb{H}$ is the symmetric bilinear form 	
		\begin{gather*}
		\beta  \colon k^{\oplus 2}  \times k^{\oplus 2} \to k,\\
		\beta((a_{1}, a_{2}), (b_1, b_2)) = a_{1} b_{2} +  a_{2} b_{1}.
	\end{gather*}
	The class of $\mathbb{H}$ equals $\langle 1, -1 \rangle$ in $\operatorname{GW}(k)$.
\end{df}

The following lemma describes  $w_{0}(f)$ when $f$ has  a simple zero.
\begin{lm}	
	If $f$ has a simple zero at the origin, then $w_{0}(f) = \left\langle \det \frac{\partial f_i}{\partial x_j}(0) \right\rangle$.
\end{lm}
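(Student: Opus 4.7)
The plan is a direct computation. The hypothesis that $f$ has a simple zero at the origin means precisely that the Jacobian determinant $J_0 := \det\frac{\partial f_i}{\partial x_j}(0)$ is a unit in $k$, so $f$ is \'etale at $0$. Hence the induced map on completions $k[[x_1,\dots,x_n]] \to k[[x_1,\dots,x_n]]$ sending $x_i \mapsto f_i$ is an isomorphism, which forces $(f_1,\dots,f_n) = \mathfrak{m}_0$ in the completion. Combining this with the isolated zero hypothesis, one concludes that $Q_0(f) \cong k$ as a $k$-algebra, with each $x_j$ mapping to $0$.

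Once $Q_0(f) \cong k$ is established, I would compute the distinguished socle element $E$ explicitly. Writing $f_i = \sum_{j} a_{i,j}\, x_j$ with $a_{i,j} \in P$ (possible since $f_i(0)=0$), note that $a_{i,j}(0) = \frac{\partial f_i}{\partial x_j}(0)$. Then $E = \det(a_{i,j}) \in Q_0(f)$, and under the isomorphism $Q_0(f) \cong k$ (evaluation at $0$), $E$ is identified with $\det(a_{i,j}(0)) = J_0 \in k^\ast$.

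The last step is to read off the form. A $k$-linear functional $\phi \colon Q_0(f) \to k$ with $\phi(E) = 1$ corresponds, under the identification $Q_0(f) \cong k$, to multiplication by $J_0^{-1}$. The bilinear form $\beta_\phi$ on the one-dimensional space $Q_0(f)$ is then $(a,b) \mapsto J_0^{-1} a b$, whose Gram matrix is the $1 \times 1$ matrix $(J_0^{-1})$. This represents the class $\langle J_0^{-1}\rangle = \langle J_0\rangle$ in $\operatorname{GW}(k)$, using the identity $\langle \alpha \rangle = \langle \alpha \beta^2 \rangle$ with $\beta = J_0^{-1}$. This gives the claimed equality $w_0(f) = \langle \det\frac{\partial f_i}{\partial x_j}(0) \rangle$.

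There is no genuine obstacle here; the only point that requires a touch of care is justifying $Q_0(f) \cong k$ from the invertibility of the Jacobian. If one prefers to avoid passing to completions, the same conclusion follows from the fact that the map $\operatorname{Spec} P_{\mathfrak{m}_0}/(f_1,\dots,f_n) \to \operatorname{Spec} k$ is unramified of rank at most one (by Nakayama applied to $\mathfrak{m}_0/(f_1,\dots,f_n)\mathfrak{m}_0$, using that $(a_{i,j}(0))$ is invertible) and is nonempty, hence an isomorphism.
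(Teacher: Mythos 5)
Your proof is correct and follows essentially the same route as the paper's: both are direct computations of $\beta_\phi$ on the one-dimensional algebra $Q_0(f)\cong k$, the only difference being the choice of basis (the paper evaluates $\beta_\phi(E,E)=J_0$ directly using $E$ as basis vector, while you use $1$ as basis vector, get $\langle J_0^{-1}\rangle$, and then invoke $\langle\alpha\rangle=\langle\alpha\beta^2\rangle$). Your extra care in justifying $Q_0(f)\cong k$ from invertibility of the Jacobian is a reasonable addition that the paper leaves implicit.
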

\begin{proof}
	We have $Q_{0}(f) = k$ and $E = \det \frac{\partial f_i}{\partial x_j}(0)$.  The element $E \in Q_{0}(f)$ is then a $k$-basis, and $w_{0}(f)$ is represented by the form $\beta_{\phi}$ satisfying 
	\begin{align*}
		\beta_{\phi}(E, E) =& \phi( \det \frac{\partial f_i}{\partial x_j}(0) \cdot E) \\
					=& \det \frac{\partial f_i}{\partial x_j}(0).
	\end{align*}
\end{proof}
	
	When $f$ has an arbitrary isolated zero, the following procedure computes $w_{0}(f)$.
	
\begin{table}[htp] \label{Table: HowToCompute}
\caption{Method for computing the ELK class of $f$}
\begin{center}
\begin{enumerate}
	\item Compute a Gr\"{o}bner (or standard) basis for the ideal $(f_1,\dots, f_n)$ and a $k$-basis for the vector space $Q_{0}(f)$.
	\item Express $E$ in terms of the $k$-basis by performing a division with the Gr\"{o}bner basis.
	\item Define an explicit $k$-linear function $\phi \colon Q_{0}(f) \to k$ satisfying $\phi(E)=1$ using the $k$-basis.
	\item For every pair $b_i, b_j$ of basis elements, express $b_i \cdot b_j$ in terms of the $k$-basis by performing a division and then use that expression to evaluate $\phi( b_{i} \cdot b_{j})$.  
	\item Output: The matrix with entries $\phi( b_{i} \cdot b_{j} )$ is the Gram matrix of a symmetric bilinear form that represents $w_{0}(f)$.
\end{enumerate}
\end{center}
\end{table}
(For a detailed exposition on how to compute in a finite dimensional $k$-algebra such as $Q_{0}(f)$, see Section~2, Chapter~2 and Chapter~4 of \cite{cox05}.)

 Table~\ref{Table: ADEsing} describes some classes that were computed using this procedure.  The table should be read as follows.  The second column displays a polynomial $g$, namely the polynomial equation of the ADE singularity named in the first column.  The associated gradient $\operatorname{grad}(g) := (\frac{\partial g}{\partial x}, \frac{\partial g}{\partial y})$ is a polynomial function $\bbA^2_{\bbQ} \to \bbA^{2}_{\bbQ}$ with an isolated zero at the origin, and the third column is its ELK class $w_{0}(\operatorname{grad}(g)) \in \operatorname{GW}(\bbQ)$.  (We consider $g$ as a polynomial with rational coefficients.)

\begin{table}[htp] 
\caption{ELK classes for ADE singularities}  \label{Table: ADEsing}
\begin{center}
\begin{tabular}{l l l}
	Singularity 			&	Equation $g$					&	 $w_{0}(\operatorname{grad}(g)) \in \operatorname{GW}(\bbQ)$ \\
	\hline \hline	 													 \\
	$A_{n}$, $n$ odd		&	$x_1^2+x_2^{n+1}$				&	$ \frac{n-1}{2} \cdot \mathbb{H}+\left \langle 2(n+1) \right \rangle$							\\
	$A_{n}$, $n$ even		&	$x_1^2+x_2^{n+1}$				&	$\frac{n}{2} \cdot \mathbb{H}$														\\
	$D_{n}$, $n$ even		&	$x_{2} (x_{1}^{2}+x_{2}^{n-2})$		&	$\frac{n-2}{2} \cdot \mathbb{H}+\left\langle  -2, 2(n-1)  \right\rangle$							\\
	$D_{n}$, $n$ odd		&	$x_{2} (x_{1}^{2}+x_{2}^{n-2})$		&	$\frac{n-1}{2} \cdot \mathbb{H}+\langle -2 \rangle$ 										\\
	$E_{6}$				& 	$x_{1}^{3}+x_{2}^{4}$			&	$3 \cdot \mathbb{H}$															\\
	$E_{7}$				& 	$x_{1} (x_{1}^{2}+x_{2}^{3})$		&	$3 \cdot \mathbb{H}+\left\langle -3 \right\rangle $										\\
	$E_{8}$				&	$x_{1}^{3}+x_{2}^{5}$				&	$4 \cdot \mathbb{H}$								
\end{tabular}
\end{center}
\end{table}

The description of $w_{0}(\operatorname{grad}(g))$ in Table~\ref{Table: ADEsing} remains valid when $\bbQ$ is replaced by field of characteristic $0$ or $p>0$ for $p$ sufficiently large relative to $n$ but possibly not for small $p$ (e.g.~the description of the $A_2$ singularity is invalid in characteristic $3$ because $\operatorname{grad}(g)$ has a nonisolated zero at the origin).

\section{Local $\bbA^1$-Brouwer degree} \label{Section: LocalDegree}

Morel's $\bbA^1$-Brouwer degree homomorphism \begin{equation*}
\deg: [(\bbP_k^1)^{\wedge n},(\bbP_k^1)^{\wedge n}] \to \GW(k) 
\end{equation*} gives rise to a notion of local degree, which we describe in this section. We then show that the degree is the sum of local degrees under appropriate hypotheses (Proposition \ref{pr_deg_is_sum_local_deg}), and that when $f$ is \'etale at $x$, the local degree is computed by $\deg^{\bbA^1}_x f =\Tr_{k(x)/k} \langle J (x) \rangle$, where $J(x)$ denotes the Jacobian determinant $J = \det \begin{pmatrix} \frac{\partial f_i}{\partial x_j} \end{pmatrix}$ evaluated at $x$ (Proposition~\ref{loc_degree_etale_point}). For endomorphisms of $\bbP^1_k$, these notions and properties are stated in \cite{Morel_motivicpi0_sphere} \cite{morel06}, and build on ideas of Lannes. To identify the local degree at an \'etale point, we use results of Hoyois \cite{Hoyois_lef}.

To motivate the definition, recall that to define the local topological Brouwer degree of $f: \bbR^n \to \bbR^n$ at a point $x$, one can choose a sufficiently small $\epsilon>0$ and take the $\bbZ$-valued topological degree of the map $$\xymatrix{S^{n-1} \cong \{ z : \| z - x \| = \epsilon \} \ar[rrr]^{\frac{f - f(x)}{\| f - f(x) \|}} &&&  \{ z : \| z  \| =1 \} \cong S^{n-1} }.$$ By translation and scaling, the map $\frac{f - f(x)}{\| f - f(x) \|}$ can be replaced by the map induced by $f$ from the boundary $\partial B(x, \epsilon)$ of a small ball $B(x, \epsilon)$ centered at $x$ to a boundary $\partial B(f(x), \epsilon')$ of a small ball centered at $f(x)$. The suspension of this map can be identified with the map induced by $f$ \begin{equation}\label{loc_deg_ball_relative_map}f: \frac{B(x, \epsilon)}{\partial B(x, \epsilon)} \to \frac{B(f(x), \epsilon')}{\partial B(f(x), \epsilon')},\end{equation} from the homotopy cofiber of the inclusion $\partial B(x, \epsilon) \to B(x, \epsilon) $ to the analogous homotopy cofiber. As $\frac{B(x, \epsilon)}{\partial B(x, \epsilon)}$ is also the homotopy cofiber of $B(x, \epsilon) -\{x\}\to B(x, \epsilon) $, we are free to use the latter construction for the (co)domain in \eqref{loc_deg_ball_relative_map}: \begin{equation}\label{loc_deg_ball_rel2}f: \frac{B(x, \epsilon)}{B(x, \epsilon)-\{x\}} \to \frac{B(f(x), \epsilon')}{B(f(x), \epsilon') - \{f(x)\}}.\end{equation} 

In $\bbA^1$-algebraic topology, the absence of small balls around points whose boundaries are spheres makes the definition of local degree using the map $\frac{f - f(x)}{\| f - f(x) \|}$ problematic. However, the map \eqref{loc_deg_ball_rel2} generalizes to a map between spheres by Morel and Voevodsky's Purity Theorem. This allows us to define a local degree when $x$ and $f(x)$ are both rational points, as in the definition of $f_x'$ given below. When $x$ is not rational, we precompose with the collapse map from the sphere $\bbP^n_k / \bbP_{k}^{n-1} \to \bbP^n_k /\bbP^n_k -\{x\}$ to obtain Definition \ref{def:local_degree_f(x)_rational}. This is shown to be compatible with the former definition (Proposition \ref{local_degree_alternate_def}).  

We now give Definition \ref{def:local_degree_f(x)_rational}, first introducing the necessary notation.

By \cite[Proposition~2.17 numbers 1 and 3, page~112]{morelvoevodsky1998}, there is a canonical $\bbA^1$-weak equivalence $(\bbP_k^1)^{\wedge n} \cong  \bbP_k^n/\bbP_k^{n-1}$ as both can be identified with the Thom space $\Th(\mathcal{O}_k^n)$ of the trivial rank $n$ bundle on $\Spec k$. Thus we may take the degree of a map $ \bbP_k^n/\bbP_k^{n-1} \to  \bbP_k^n/\bbP_k^{n-1}$ in the homotopy category.

Let $x$ be a closed point of $\bbA^n_k$, and let  $f: \bbA^n_k \to \bbA^n_k$ be a function such that $x$ is isolated in its fiber $f^{-1}(f(x))$. Choose a Zariski open neighborhood $U$ of $x$ such that $f$ maps $U - \{x\} $ into $\bbA^n_k - \{ f(x)\}$.  The Nisnevich local homotopy push-out diagram $$\xymatrix{ U -\{x\} \ar[r] \ar[d] & \bbP_k^{n} - \{x\} \ar[d]  \\ U \ar[r] & \bbP_k^n} $$ induces a canonical homotopy equivalence $U/(U - \{ x\}) \to \bbP_k^n/\bbP_k^{n} - \{x\}$. 

There is a trivialization of $T_x \bbP^n_k$ coming from the isomorphism $T_x \bbP_k^n \cong T_x \bbA_k^n$ and the canonical trivialization of $T_x \bbA^n_k$. Purity thus induces an $\bbA^1$-weak equivalence $ \bbP_k^n/(\bbP_k^{n} - \{x\}) \cong \Th(\mathcal{O}^n_{k(x)}) $. As above, \cite[Proposition 2.17 number 3, page~12]{morelvoevodsky1998} gives a canonical $\bbA^1$-weak equivalence $\Th(\mathcal{O}^n_{k(x)}) \cong \bbP_{k(x)}^n/(\bbP_{k(x)}^{n-1})$.  Let $\puritytoPnbar:  \bbP_k^n/(\bbP_k^{n} - \{x\}) \stackrel{\cong}{\to}  \bbP_{k(x)}^n/(\bbP_{k(x)}^{n-1})$ denote the composite $\bbA^1$-weak equivalence.

For $n = 1$, the following lemma is \cite[Lemma 5.4]{Hoyois_lef}, and the proof generalizes to the case of larger $n$, the essential content being \cite[Lemma 2.2]{Voevodsky_MCZ2}.

\begin{lm}\label{crush=id}
For any $k$-point $x$ of $\bbA^n_k$, the composition $$ \crush_x: \bbP_k^n/(\bbP_k^{n-1}) \to \bbP_k^n/(\bbP_k^{n} - \{x\}) \cong \bbP_k^n/(\bbP_k^{n-1})$$ of the collapse map with $\puritytoPnbar$ is $\bbA^1$-homotopy equivalent to the identity.
 
\end{lm}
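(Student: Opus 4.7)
The plan is to first verify the claim for $x=0$ by unwinding the constituent maps, and then reduce the general case to $x=0$ by constructing an $\bbA^1$-homotopy interpolating $\crush_0$ and $\crush_x$ out of relative purity applied to the family of points $t \mapsto tx$.

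\emph{The case $x=0$.} The inclusion $\bbP_k^{n-1} \hookrightarrow \bbP_k^n - \{0\}$ is itself an $\bbA^1$-weak equivalence: the linear projection $[x_0 : \cdots : x_n] \mapsto [0 : x_1 : \cdots : x_n]$ away from $0 = [1:0:\cdots:0]$ is a retraction, and the scaling homotopy $([x_0:\cdots:x_n], t) \mapsto [tx_0 : x_1 : \cdots : x_n]$ deforms the identity on $\bbP_k^n - \{0\}$ to this retraction. Hence the collapse map $c : \bbP_k^n/\bbP_k^{n-1} \to \bbP_k^n/(\bbP_k^n - \{0\})$ is an $\bbA^1$-equivalence. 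The map $\puritytoPnbar_0$ is the composition of the excision equivalence $e : \bbA^n_k/(\bbA^n_k - \{0\}) \to \bbP_k^n/(\bbP_k^n - \{0\})$ (in its inverse direction) with the canonical identification $\Th(\calO_k^n) = \bbA^n_k/(\bbA^n_k-\{0\}) \cong \bbP_k^n/\bbP_k^{n-1}$ of \cite[Proposition 2.17]{morelvoevodsky1998}. Unwinding definitions, this latter identification is itself realized by $c^{-1} \circ e$ in the $\bbA^1$-homotopy category, so $\puritytoPnbar_0 \simeq (c^{-1} \circ e) \circ e^{-1} = c^{-1}$, and therefore $\crush_0 = \puritytoPnbar_0 \circ c \simeq \mathrm{id}$.

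\emph{Reduction to $x = 0$.} Consider the closed immersion $\iota : \bbA^1_k \hookrightarrow \bbA^1_k \times \bbP^n_k$ given by $t \mapsto (t, tx)$, with image $\Gamma$. Since $\Gamma$ is the graph of $t \mapsto tx$, it lies in the open subscheme $\bbA^1_k \times \bbA^n_k$, and its normal bundle in $\bbA^1_k \times \bbP^n_k$ is the pullback of $T\bbA^n_k$ along $\Gamma \to \bbA^n_k$, hence canonically trivial of rank $n$. Relative Morel--Voevodsky purity over $\bbA^1_k$ thus produces an $\bbA^1$-equivalence
\[
(\bbA^1_k \times \bbP^n_k)/\bigl((\bbA^1_k \times \bbP^n_k) - \Gamma\bigr) \;\cong\; \Th(\calO^n_{\bbA^1_k}) \;\cong\; (\bbA^1_k)_+ \wedge (\bbP^n_k/\bbP^{n-1}_k).
\]
Since $\Gamma$ is disjoint from $\bbA^1_k \times \bbP^{n-1}_k$, the family of collapse maps yields a morphism $(\bbA^1_k)_+ \wedge (\bbP^n_k/\bbP^{n-1}_k) \to (\bbA^1_k \times \bbP^n_k)/((\bbA^1_k \times \bbP^n_k) - \Gamma)$, and composing with relative purity provides an endomorphism of $(\bbA^1_k)_+ \wedge (\bbP^n_k/\bbP^{n-1}_k)$ whose fibers at $t = 0$ and $t = 1$ are $\crush_0$ and $\crush_x$. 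Post-composition with the projection to $\bbP^n_k/\bbP^{n-1}_k$ produces the desired $\bbA^1$-homotopy $\crush_0 \simeq \crush_x$, which together with the previous step yields $\crush_x \simeq \mathrm{id}$.

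The main technical point is ensuring the naturality of purity under base change, so that the relative purity equivalence over $\bbA^1_k$ genuinely restricts to the pointwise purity equivalences on fibers, and similarly for the collapse. This is a standard property of the six-functor formalism recalled in the Conventions (cf.\ \cite[Section 2]{Hoyois_lef}); once it is in hand, the fiberwise identifications assemble into the $\bbA^1$-homotopy described above.
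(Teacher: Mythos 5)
Your reduction to $x=0$ is a genuinely different argument from the paper's, and it should work, but your base case $x=0$ conceals the essential content under the phrase ``unwinding definitions.''

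On the base case: you assert that $\puritytoPnbar_0$ is the composite of excision $e^{-1} : \bbP^n_k/(\bbP^n_k-\{0\}) \to \bbA^n_k/(\bbA^n_k-\{0\})$ with the identification of $\bbA^n_k/(\bbA^n_k-\{0\})$ and $\bbP^n_k/\bbP^{n-1}_k$. But $\puritytoPnbar$ is defined via the Morel--Voevodsky purity isomorphism $\bbP^n_k/(\bbP^n_k - \{0\}) \cong \Th(\calO^n_k)$, which is constructed by deformation to the normal cone; the statement that this purity isomorphism coincides with the excision isomorphism (after identifying $\Th(\calO^n_k)$ with $\bbA^n_k/(\bbA^n_k - \{0\})$) is exactly the nontrivial computation the paper cites from \cite[Lemma 2.2]{Voevodsky_MCZ2} together with the proof of \cite[Proposition 2.17 number 3]{morelvoevodsky1998}. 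Your argument reduces $\crush_0 \simeq \mathrm{id}$ to this claim, which is correct, but you present it as if it follows by inspection. Either supply the citation or acknowledge that this step is the crux. Your observation that $\bbP^{n-1}_k \hookrightarrow \bbP^n_k - \{0\}$ is an $\bbA^1$-equivalence (so the collapse map $c$ is an equivalence) is correct and standard, but it only reduces the problem to identifying $\puritytoPnbar_0$ with $c^{-1}$, which is where the real work is.

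On the reduction step: the paper's proof uses the explicit affine automorphism $f([X_0,\ldots,X_n]) = [X_0, X_1 + a_1 X_0, \ldots, X_n + a_n X_0]$, invokes naturality of Purity \cite[Lemma 2.1]{Voevodsky_MCZ2} to transport $\puritytoPnbar_0$ to $\puritytoPnbar_x$, and then gives a naive $\bbA^1$-homotopy $[X_0,\ldots,X_n] \times t \mapsto [X_0, X_1 + a_1 t X_0, \ldots, X_n + a_n t X_0]$ between the induced endomorphism $\overline{f}$ of $\bbP^n_k/\bbP^{n-1}_k$ and the identity. Your alternative --- relative Purity over $\bbA^1_k$ applied to the graph $\Gamma$ of $t \mapsto tx$, assembling the pointwise collapse-then-purity maps into a single endomorphism of $(\bbA^1_k)_+ \wedge (\bbP^n_k/\bbP^{n-1}_k)$ --- is more machinery but conceptually clean, and it yields the same conclusion. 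The base change compatibility you flag is indeed the technical load-bearing fact; the paper avoids it by only needing naturality of Purity under an automorphism rather than full base change along a family. So both routes are valid, but the paper's is lighter on foundations.
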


\begin{proof}
Let $[X_0, X_1, \ldots, X_n]$ denote homogeneous coordinates on $\bbP^n_k,$ and suppose $x$ has homogeneous coordinates $[1, a_1, \ldots, a_n]$.  Let $f: \bbP_k^n \to \bbP_k^n$ be the automorphism $$f([X_0, X_1, \ldots, X_n] ) = [X_0, X_1 + a_1 X_0, \ldots, X_n + a_n X_0].$$ The diagram $$ \xymatrix{ \bbP_k^n/(\bbP_k^{n} - \{0\}) \ar[d]_f \ar[r]_{\cong}^{\puritytoPnbar} & \ar[d]^{1} \bbP_k^n/(\bbP_k^{n-1}) \\
\bbP_k^n/(\bbP_k^{n} - \{x\})\ar[r]_{\cong}^{\puritytoPnbar} & \bbP_k^n/(\bbP_k^{n-1})}$$ commutes by naturality of Purity \cite[Lemma 2.1]{Voevodsky_MCZ2} and the compatibility of the trivializations of $T_x \bbP^n_k$ and $T_0 \bbP^n_k$. The diagram $$\xymatrix { \bbP_k^n/(\bbP_k^{n-1}) \ar[d]_{\overline{f}} \ar[r] &\bbP_k^n/(\bbP_k^{n} - \{0\}) \ar[d]^f \\
\bbP_k^n/(\bbP_k^{n-1})  \ar[r] &\bbP_k^n/(\bbP_k^{n} - \{x\}) } $$ comparing collapse maps via the maps induced by $f$ commutes by definition. Since $$[X_0, X_1, \ldots, X_n] \times t \mapsto [X_0, X_1 + a_1 tX_0, \ldots, X_n + a_nt X_0] $$ defines a naive homotopy between $\overline{f}$ and the identity, it suffices to show the lemma when $x$ is the origin. This case follows from \cite[Lemma 2.2]{Voevodsky_MCZ2} and \cite[Proposition~2.17 proof of number 3, page~112]{morelvoevodsky1998}.

\end{proof}

In particular, for a $k$-rational point $x$, the collapse map $\bbP_k^n/(\bbP_k^{n-1}) \to \bbP_k^n/(\bbP_k^{n} - \{x\}) $ is an $\bbA^1$-homotopy equivalence.

\begin{df}\label{def:local_degree_f(x)_rational}
Let $f: \bbA^n_k \to \bbA^n_k$ be a morphism, and let $x$ be a closed point such that $x$ is isolated in its fiber $f^{-1}(f(x))$, and $f(x)$ is $k$-rational. The {\em local degree} (or local $\bbA^1$-Brouwer degree) $\deg^{\bbA^1}_x f$ of $f$ at $x$ is Morel's $\bbA^1$-degree homomorphism applied to a map $$ f_x: \bbP_k^n/\bbP_k^{n-1} \to \bbP_k^n/\bbP_k^{n-1} $$ in the homotopy category, where $f_x$ is defined to be the composition
$$ \bbP_k^n/\bbP_k^{n-1} \to \bbP_k^n/(\bbP_k^{n} - \{x\})  \stackrel{\cong}{\leftarrow}    U/(U - \{ x\}) \stackrel{f\vert_U}{\longrightarrow}  \bbP_k^n/(\bbP_k^n - \{f(x) \}) \stackrel{\cong}{\leftarrow}  \bbP_k^n/\bbP_k^{n-1} $$
\end{df}

When $x$ is a $k$-point, it is perhaps more natural to define the local degree in the following equivalent manner: the trivialization of the tangent space of $\bbA^n_k$ gives canonical $\bbA^1$-weak equivalences $U/(U - \{ x\}) \cong \Th(\mathcal{O}_k^n)$ and $ \bbA_k^n/(\bbA^n_k -\{ f(x)\})\cong \Th(\mathcal{O}_k^n)$ by Purity \cite[Theorem 2.23, page~115]{morelvoevodsky1998}. As above, we have a canonical $\bbA^1$-weak equivalence $\Th(\mathcal{O}_k^n) \cong \bbP_k^n/(\bbP_k^{n-1})$. The local degree of $f$ at $x$ is the degree of the map in the homotopy category $$f_x': \bbP_k^n/(\bbP_k^{n-1}) \cong U/(U -\{ x\})  \stackrel{f\vert_U}{\longrightarrow} \bbA_k^n/(\bbA^n_k -\{ f(x)\})\cong  \bbP_k^n/(\bbP_k^{n-1})$$ as we now show.

\begin{pr}\label{local_degree_alternate_def}
When $x$ is a $k$-point of $\bbA^n_k$ and $f: \bbA^n_k \to \bbA^n_k$ is a morphism
$$\deg^{\bbA^1}_x f = \deg f_x'$$
\end{pr}

\begin{proof}

Let $ c_{f(x)}^{-1}$ denote the inverse in the homotopy category of $c_{f(x)}$ as defined in Lemma \ref{crush=id}. The definitions produce the equality $ c_{f(x)}^{-1} f_x' c_x = f_x$, which implies the result by Lemma \ref{crush=id}.

\end{proof}

For $n = 1$, the following lemma is \cite[Lemma 5.5]{Hoyois_lef}, and Hoyois's proof generalizes to higher $n$ as follows.

\begin{lm}\label{collapse_is_unit}
Let $x$ be a closed point of $\bbA_k^n$. The collapse map $$ \bbP_k^n/(\bbP_k^{n-1}) \to \bbP_k^n/(\bbP_k^{n} - \{x\}) \cong \Th T_x \bbP_k^n \cong  \Th T_x \bbA_k^n \cong   \bbP_k^n/(\bbP_k^{n-1}) \wedge \Spec k(x)_+$$ is $ \bbP_k^n/(\bbP_k^{n-1}) \wedge (-)$ applied to the canonical map \begin{equation}\label{collapse_6_functors}\eta: 1_k \to p_* p^* 1_k \cong p_* 1_{k(x)}\end{equation} in $\Spt(k)$, where $p:\Spec k(x) \to \Spec k$ is the structure map, and the last equivalence is from \cite[3. Proposition~2.17, page~112]{morelvoevodsky1998}. 
\end{lm}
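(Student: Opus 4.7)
The plan is to interpret the collapse map within Ayoub's six-functor formalism and recognize it as the unit $\eta \colon 1_k \to p_* p^* 1_k$ smashed with the sphere $\bbP^n_k/\bbP^{n-1}_k$.

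First, I would rewrite the target in six-functor terms. By Nisnevich excision applied to the open inclusion $\bbA^n_k \hookrightarrow \bbP^n_k$ (whose complement $\bbP^{n-1}_k$ is disjoint from $\{x\}$), there is a canonical $\bbA^1$-weak equivalence $\bbP^n_k/(\bbP^n_k - \{x\}) \simeq \bbA^n_k/(\bbA^n_k - \{x\})$. Purity identifies the latter with $\Th(T_x\bbA^n_k)$, which, under the canonical trivialization of $T\bbA^n_k$, becomes $\Th(p^*\mathcal{O}^n_k)$. As an object of $\Spt(k)$, this equals $p_! \Sigma^{p^*\mathcal{O}^n_k} 1_{k(x)}$. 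The projection formula yields $p_! \Sigma^{p^*\mathcal{O}^n_k} 1_{k(x)} \simeq \Sigma^{\mathcal{O}^n_k} p_! 1_{k(x)}$, and since $p$ is finite (hence proper), $p_! 1_{k(x)} \simeq p_* 1_{k(x)}$. Together with the Morel--Voevodsky equivalence $\Sigma^{\mathcal{O}^n_k} (-) \simeq \bbP^n_k/\bbP^{n-1}_k \wedge (-)$, this recovers the identification of the target with $\bbP^n_k/\bbP^{n-1}_k \wedge p_* 1_{k(x)}$ in the statement.

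Next, I would trace the collapse map through these identifications. Writing the source $\bbP^n_k/\bbP^{n-1}_k$ as $\Sigma^{\mathcal{O}^n_k} 1_k$, and using that the collapse map is the third arrow in the Nisnevich cofiber sequence associated to $\bbP^{n-1}_k \hookrightarrow \bbP^n_k - \{x\}$, the map must factor as $\mathrm{id}_{\Sigma^{\mathcal{O}^n_k}} \wedge \alpha$ for some morphism $\alpha \colon 1_k \to p_* 1_{k(x)}$. The remaining task is to identify $\alpha$ with $\eta$. To do this, I would argue by naturality: the collapse map is compatible with base change, and pulling the entire diagram back along $p \colon \Spec k(x) \to \Spec k$ sends $x$ to a situation containing a $k(x)$-rational component, on which the analog of the collapse map is a homotopy equivalence (the case $n=1$ is \cite[Lemma 5.4]{Hoyois_lef}, generalized in Lemma \ref{crush=id}). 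The universal property of the unit $\eta$ of the adjunction $p^* \dashv p_*$ then forces $\alpha = \eta$, as $\eta$ is the unique morphism with the correct behavior after applying $p^*$.

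The main obstacle is to verify cleanly that the collapse map, after passing through the purity and Morel--Voevodsky identifications, is genuinely of the form $\mathrm{id} \wedge \alpha$ rather than a more complicated expression involving the Thom isomorphism in a twisted way. This requires keeping track of the fact that the chosen trivialization of $T_x \bbP^n_k$ is \emph{pulled back} from a trivialization of $\mathcal{O}^n_k$ over $\Spec k$, so that the purity equivalence is compatible with $p^*$. A further subtlety is that $k(x)/k$ need not be separable, so $p$ is not in general \'etale and $p^! \neq p^*$; one must therefore use the coincidence $p_! = p_*$ (coming from properness) rather than any \'etale-type identification to produce the unit $\eta$ on the nose.
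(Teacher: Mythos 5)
Your overall strategy---reinterpret the collapse map inside the six-functor formalism and then verify that its ``desuspension'' $\alpha\colon 1_k\to p_*1_{k(x)}$ equals the unit $\eta$ by base-changing to $k(x)$ and invoking Lemma~\ref{crush=id}---is the same as the paper's. The paper sets up the identification via a \emph{Euclidean embedding} in the sense of Hoyois \cite[Def.~3.8, 3.9]{Hoyois_lef} and then invokes \cite[Prop.~3.14]{Hoyois_lef}, which is precisely the criterion that makes your ``pull back and check'' step rigorous: it says that after the Euclidean identification, the map equals $(\bbP^n_k/\bbP^{n-1}_k)\wedge\eta$ if and only if a specific composition (involving base change to $k(x)$, the diagonal $\Spec k(x)\to\Spec k(x)\otimes_k k(x)$, and the Purity equivalence $r$ from Lemma~\ref{crush=id}) is the identity in $\Spt(k)$.

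The gap in your argument is the step ``the universal property of the unit forces $\alpha=\eta$.'' The adjunction $p^*\dashv p_*$ says $\alpha$ is classified by its adjoint $\varepsilon\circ p^*(\alpha)\colon p^*1_k\to 1_{k(x)}$, which is not the naive ``$p^*$ applied to the diagram''; tracking the counit $\varepsilon\colon p^*p_*\to\mathrm{id}$ and the correct choice of connected component of $\Spec(k(x)\otimes_k k(x))$ is exactly the content of Hoyois's Proposition 3.14 together with the specific map $h$ constructed in the paper's proof. Without that (or an equivalent unwinding), your ``naturality'' sentence is an assertion, not a proof.

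Your remark about inseparability is a genuine observation but the proposed workaround ($p_!\cong p_*$ from properness) does not fix matters. Properness gives $p_!\cong p_*$ unconditionally, but the lemma's statement requires the identification $\Sigma^\infty\Spec k(x)_+\cong p_!1_{k(x)}$, which holds because $\Sigma^\infty\Spec k(x)_+\cong p_\sharp 1_{k(x)}$ (smoothness of $p$) and $p_\sharp\cong p_!$ ($p$ \'etale): for $\Spec k(x)\to\Spec k$, smoothness and \'etaleness are equivalent to separability of $k(x)/k$. Moreover, your base-change step needs the diagonal $\Spec k(x)\to\Spec(k(x)\otimes_k k(x))$ to be a clopen immersion, which again requires separability (otherwise $k(x)\otimes_k k(x)$ has nilpotents). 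So \'etaleness is used essentially, not avoidably, and the lemma should be understood with the implicit hypothesis that $k(x)/k$ is separable---which is automatic in the application to Proposition~\ref{loc_degree_etale_point}, where $f$ is \'etale at $x$ and $f(x)$ is $k$-rational.
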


\begin{proof}
As above, consider the trivialization of $T_x \bbP^n_k$ coming from the isomorphism $T_x \bbP_k^n \cong T_x \bbA_k^n$ and the canonical trivialization of $T_x \bbA_k^n$. The closed immersion $x: \Spec k(x) \to \bbP_n^k$ and this trivialization determine a Euclidean embedding in the sense of Hoyois \cite[Definition 3.8]{Hoyois_lef}. This Euclidean embedding determines an isomorphism $   \bbP_k^n/(\bbP_k^{n} - \{x\}) \cong \bbP_k^n/(\bbP_k^{n-1}) \wedge p_! 1_{k(x)}$ in $\Spt(k)$ by \cite[3.9]{Hoyois_lef} and the identification $\Th \mathcal{O}_k^n \cong \bbP_k^n/(\bbP_k^{n-1}) $ of \cite[3. Proposition~2.17, page~112]{morelvoevodsky1998}. Since $p$ is finite \'etale, there is a canonical equivalence $p_! 1_{k(x)}\cong \Spec k(x)_+$, and these identifications agree with the isomorphism $\bbP_k^n/(\bbP_k^{n} - \{x\}) \cong  \bbP_k^n/(\bbP_k^{n-1}) \wedge \Spec k(x)_+ $ in the statement of the lemma. By \cite[Proposition 3.14]{Hoyois_lef}, it thus suffices to show that a certain composition \begin{equation}\label{comp_collapse_unit_pf}
\bbP_k^n/(\bbP_k^{n-1}) \wedge \Spec k(x)_+ \to \bbP_k^n/(\bbP_k^{n} - \{x\}) \wedge \Spec k(x)_+ \stackrel{h}{\to} \bbP_k^n/(\bbP_k^{n-1}) \wedge \Spec k(x)_+
\end{equation} of the collapse map $ \bbP_k^n/(\bbP_k^{n-1}) \to \bbP_k^n/(\bbP_k^{n} - \{x\})$ smash $\Spec k(x)_+$ with a map $h$ is the identity in $\Spt(k)$. 

To define $h$, introduce the following notation. Let $$x_{k(x)}: \Spec k(x) \otimes k(x) \hookrightarrow \bbP^n_{k(x)}$$ be the base change of $x$. Let $$\tilde{x} = x_{k(x)} \circ \Delta: \Spec k(x) \to \bbP_{k(x)}^n$$ be the composition of the diagonal with $x_{k(x)}$. Let $$r: \bbP_{k(x)}^n/(\bbP_{k(x)}^{n} - \{\tilde{x} \}) \stackrel{\cong}{\rightarrow} \bbP_{k(x)}^n/\bbP_{k(x)}^{n-1}$$ be as Lemma \ref{crush=id} with $k$ replaced by $k(x)$.  Using the identifications $\bbP_k^n/(\bbP_k^{n} - \{x\}) \wedge \Spec k(x)_+ \cong \bbP_{k(x)}^n/(\bbP_{k(x)}^{n} - x_{k(x)})$ and $\bbP_{k(x)}^n/(\bbP_{k(x)}^{n-1}) \cong \bbP_k^n/(\bbP_k^{n-1}) \wedge \Spec k(x)_+$, we can view $h$ as a map $$h:\bbP_{k(x)}^n/(\bbP_{k(x)}^{n} - x_{k(x)}) \to \bbP_{k(x)}^n/(\bbP_{k(x)}^{n-1}).$$ Then $h$ is the composition \begin{equation*}
\bbP_{k(x)}^n/(\bbP_{k(x)}^{n} - x_{k(x)}) \to \bbP_{k(x)}^n/(\bbP_{k(x)}^{n} - \{\tilde{x} \}) \stackrel{r}{\to} \bbP_{k(x)}^n/(\bbP_{k(x)}^{n-1}) 
\end{equation*} 

The composition \eqref{comp_collapse_unit_pf} is now identified with $p_{\sharp}$ applied to the composition in Lemma  \ref{crush=id} of the collapse map with $r$ for the rational point $\tilde{x}: \Spec k(x) \to \bbA_{k(x)}^n$, completing the proof by Lemma  \ref{crush=id}.
\end{proof}

The degree of an endomorphism of $\bbP_k^n/(\bbP_k^{n-1})$ is the sum of local degrees under the hypotheses of the following Proposition. 

\begin{pr} \label{pr_deg_is_sum_local_deg}
Let $f: \bbP_k^n \to \bbP_k^n$ be a finite map such that $f^{-1}(\bbA_k^n) = \bbA_k^n$, and let $\overline{f}$ denote the induced map $\bbP_k^n/(\bbP_k^{n-1}) \to \bbP_k^n/(\bbP_k^{n-1})$. Then for any $k$-point $y$ of $\bbA_k^n$, $$\deg^{\bbA^1}(\overline{f}) = \Sigma_{x \in f^{-1}(y)} \deg^{\bbA^1}_x(f).$$
\end{pr}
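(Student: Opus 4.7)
The plan is to exhibit $\overline{f}$, in the (stable) $\bbA^1$-homotopy category, as a pinch--wedge--fold composition whose wedge summands are the local degree maps $f_{x_i}$, and then to conclude by additivity of Morel's degree homomorphism.

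First, since $f$ is finite and $f^{-1}(\bbA^n_k) = \bbA^n_k$, the fiber $f^{-1}(y)$ consists of finitely many closed points $x_1,\dots,x_r$, all lying in $\bbA^n_k$; moreover $f$ carries $\bbP^{n-1}_k$ into $\bbP^{n-1}_k$ and $\bbP^n_k - f^{-1}(y)$ into $\bbP^n_k-\{y\}$. Consequently $\overline{f}$ fits into a commutative square whose left vertical is the collapse map $\bbP^n_k/\bbP^{n-1}_k \to \bbP^n_k/(\bbP^n_k - f^{-1}(y))$, whose bottom horizontal is the map induced by $f$ to $\bbP^n_k/(\bbP^n_k-\{y\})$, and whose right vertical is the inverse of the purity equivalence $\puritytoPnbar$ identifying $\bbP^n_k/(\bbP^n_k-\{y\})\cong\bbP^n_k/\bbP^{n-1}_k$ used in Definition~\ref{def:local_degree_f(x)_rational}.

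Second, because closed points of a Noetherian scheme admit pairwise disjoint Zariski open neighborhoods $U_i\ni x_i$, the Nisnevich (indeed Zariski) excision argument preceding Definition~\ref{def:local_degree_f(x)_rational} iterates to produce a canonical $\bbA^1$-weak equivalence
$$\bbP^n_k/(\bbP^n_k-f^{-1}(y))\;\simeq\;\bigvee_{i=1}^r \bbP^n_k/(\bbP^n_k-\{x_i\}).$$
Under this identification, the collapse map in the left column becomes the pinch map onto the wedge, and the map induced by $f$ restricts on the $i$-th wedge summand to the map $U_i/(U_i-\{x_i\})\to \bbP^n_k/(\bbP^n_k-\{y\})$ appearing in Definition~\ref{def:local_degree_f(x)_rational}. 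Composing with $\puritytoPnbar^{-1}$ on the target produces exactly the map $f_{x_i}$ of that definition.

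Third, $(\bbP^1_k)^{\wedge n}$ is a cogroup object in the pointed $\bbA^1$-homotopy category (it is a suspension), so Morel's degree homomorphism is additive on pinch--wedge--fold compositions. Applying additivity to the factorization constructed above yields
$$\deg\overline{f}\;=\;\sum_{i=1}^r \deg f_{x_i}\;=\;\sum_{x\in f^{-1}(y)}\deg^{\bbA^1}_x f.$$

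The main obstacle will be the second step: ensuring that the wedge decomposition is compatible with the precomposition by the collapse map featured in Definition~\ref{def:local_degree_f(x)_rational}, so that each wedge summand agrees with $f_{x_i}$ and not merely with some representative of the same $\bbA^1$-homotopy class. I expect this compatibility to follow by applying the argument of Lemma~\ref{crush=id} (together with the six-functor reasoning of Lemma~\ref{collapse_is_unit}) once at each point $x_i$.
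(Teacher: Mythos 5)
Your proposal is correct and follows essentially the same route as the paper: both decompose $\bbP^n_k/(\bbP^n_k - f^{-1}(y))$ into a wedge over the points of $f^{-1}(y)$, identify each wedge summand of the resulting map with $f_{x_i}$, and conclude by additivity. Two small differences worth noting. First, you obtain the wedge decomposition from pairwise disjoint Zariski neighborhoods plus excision, whereas the paper gets it in one stroke from Purity, viewing $\bbP^n_k/(\bbP^n_k - f^{-1}(y))$ as the Thom space of the normal bundle of the disjoint finite scheme $f^{-1}(y) \hookrightarrow \bbA^n_k$; both work. Second, the compatibility worry you flag at the end is sidestepped in the paper rather than confronted: instead of arguing that the collapse map literally is a pinch onto the suspension coH-structure (which is not quite accurate when the $x_i$ are not $k$-rational, since the summands are then $\bbP^n_k/\bbP^{n-1}_k \wedge \Spec k(x_i)_+$ rather than copies of $\bbP^n_k/\bbP^{n-1}_k$), the paper applies $[\bbP^n_k/\bbP^{n-1}_k,-]_{\Spt(k)}$ to a commutative diagram, uses that wedge and product agree stably so the morphism group splits as a direct sum, and tracks the class of the identity through the diagram two ways. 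This gives the same additivity you want without needing Lemma~\ref{crush=id} or Lemma~\ref{collapse_is_unit}. Your version would also go through, but rephrasing the final step in terms of the splitting of $[\bbP^n_k/\bbP^{n-1}_k, \vee_i \bbP^n_k/(\bbP^n_k-\{x_i\})]$ into a direct sum is cleaner than appealing to the cogroup pinch map, precisely because it is agnostic about the residue fields of the $x_i$.
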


\begin{proof}
By Purity \cite[Theorem~2.23, page~115]{morelvoevodsky1998}, $\bbP_k^n/(\bbP_k^n - f^{-1} \{y \})$ is the Thom space of the normal bundle to $f^{-1} \{y \} \hookrightarrow \bbA_k^n$. The Thom space of a vector bundle on a disjoint union is the wedge sum of the Thom spaces of the vector bundle's restrictions to the connected components. It follows that the quotient maps $$ \bbP_k^n/(\bbP_k^n - f^{-1} \{y \}) \to \bbP_k^n/(\bbP_k^n - \{x \})$$ for $x$ in $f^{-1}(y)$ determine an $\bbA^1$-weak equivalence $$ \bbP_k^n/(\bbP_k^n - f^{-1} \{y \}) \to  \vee_{x \in f^{-1}(y)} \bbP_k^n/ (\bbP_k^n - \{x\})  .$$ There is a commutative diagram \begin{equation*}\xymatrix{ & \vee_{x \in f^{-1}(y)} \bbP_k^n/ (\bbP_k^n - \{x\}) \ar[rrd]&&  \\ \bbP_k^n/ (\bbP_k^n - \{x\})  \ar[ur]^{k_x} & \ar[l] \ar[u]^{\cong} \bbP_k^n/(\bbP_k^n - f^{-1} \{y \}) \ar[rr] && \bbP_k^n/(\bbP_k^n -\{y \})  \\ & \ar[ul]^{\cong} \ar[u] \bbP_k^n/ \bbP_k^{n-1} \ar[rr]_{\overline{f}} && \ar[u]^{\cong} \bbP_k^n/ \bbP_k^{n-1} }\end{equation*} Apply $[\bbP_k^n/ \bbP_k^{n-1}, -]_{\Spt(k)}$ to the above diagram, and let $\overline{f}_*$ be the induced map $$\overline{f}_*: [\bbP_k^n/ \bbP_k^{n-1}, \bbP_k^n/ \bbP_k^{n-1}]_{\Spt(k)}\to [\bbP_k^n/ \bbP_k^{n-1}, \bbP_k^n/ \bbP_k^{n-1}]_{\Spt(k)} .$$ Because the wedge and the product are stably isomorphic, $$[\bbP_k^n/ \bbP_k^{n-1}, \vee_{x \in f^{-1}(y)} \bbP_k^n/ (\bbP_k^n - \{x\})]_{\bbA^1}  \cong \oplus_{x \in f^{-1}(y)} [\bbP_k^n/ \bbP_k^{n-1},  \bbP_k^n/ (\bbP_k^n - \{x\})]_{\Spt(k)}$$ and on the right hand side, $k_x$ induces the inclusion of the summand indexed by $x$. The image of the identity map under $\overline{f}_*$ can be identified with $\deg  \overline{f}$. Using the outer composition in the commutative diagram, we see that the image of the identity map under $\overline{f}_*$ can also be identified with $ \Sigma_{x \in f^{-1}(y)} \deg_x f.$
\end{proof}

We now give a computation of the local degree at points where $f$ is \'etale. 

\begin{pr}\label{loc_degree_etale_point}
Let $f: \bbA_k^n \to \bbA_k^n$ be a morphism of schemes and $x$ be a closed point of $\bbA_k^n$ such that $f(x) = y$ is $k$-rational and $x$ is isolated in $f^{-1}(y)$. If $f$ is \'etale at $x$, then the local degree is computed by $$\deg^{\bbA^1}_x f =\Tr_{k(x)/k} \langle J (x) \rangle,$$ where $J(x)$ denotes the Jacobian determinant $J = \det \begin{pmatrix} \frac{\partial f_i}{\partial x_j} \end{pmatrix}$ evaluated at $x$, and $k(x)$ denotes the residue field of $x$.
\end{pr}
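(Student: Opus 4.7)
The plan is to reduce to the case where $x$ is $k$-rational and then compute directly via Morel--Voevodsky Purity and Morel's formula for the $\bbA^1$-degree of a linear automorphism. The passage from the rational case back to the general case is accomplished by base change along $p\colon \Spec k(x) \to \Spec k$, combined with Lemma~\ref{collapse_is_unit} to identify the initial collapse map in the definition of $f_x$ with the unit of the $p^* \dashv p_*$ adjunction.

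In the rational case, Proposition~\ref{local_degree_alternate_def} reduces the computation to $\deg f_x'$. Naturality of Purity \cite[Theorem~2.23]{morelvoevodsky1998} for the \'etale map $f|_U$ and the closed inclusions $\{x\} \hookrightarrow U$ and $\{y\} \hookrightarrow \bbA^n_k$ identifies $f|_U\colon U/(U-\{x\}) \to \bbA^n_k/(\bbA^n_k-\{y\})$, in the pointed homotopy category, with the Thom space map of the differential $df_x\colon T_xU \to T_y\bbA^n_k$. Relative to the coordinate trivializations, $df_x$ is multiplication by the Jacobian matrix $J = (\partial f_i/\partial x_j(x)) \in GL_n(k)$, and Morel's formula gives $\langle \det J \rangle$ for the degree of the induced automorphism of $\Th(\calO_k^n) \cong \bbP^n_k/\bbP^{n-1}_k$. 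Hence $\deg^{\bbA^1}_x f = \langle J(x) \rangle$, which equals $\Tr_{k/k}\langle J(x) \rangle$.

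For the general case, base change $f$ along $p\colon \Spec k(x) \to \Spec k$ to obtain $f_{k(x)}\colon \bbA^n_{k(x)} \to \bbA^n_{k(x)}$. This is \'etale at the canonical $k(x)$-rational point $\tilde{x}$ lying over $x$ (the diagonal of $\Spec k(x) \otimes_k k(x) \hookrightarrow \bbA^n_{k(x)}$), it sends $\tilde{x}$ to the $k(x)$-rational point $\tilde{y}$ coming from $y$, and its Jacobian at $\tilde{x}$ is $J(x) \in k(x)^{\times}$. By the rational case applied over $k(x)$, $\deg^{\bbA^1}_{\tilde{x}}(f_{k(x)}) = \langle J(x) \rangle$ in $\GW(k(x))$. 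To finish, I would prove the trace formula $\deg^{\bbA^1}_x(f) = \Tr_{k(x)/k}\bigl(\deg^{\bbA^1}_{\tilde{x}}(f_{k(x)})\bigr)$: by Lemma~\ref{collapse_is_unit} the initial collapse map in the definition of $f_x$ is $\mathrm{id} \wedge \eta$ with $\eta\colon 1_k \to p_*1_{k(x)}$ the unit; since $p$ is finite \'etale so $p_\sharp \cong p_! \cong p_*$, the remainder of $f_x$ is identified, via the $p_\sharp \dashv p^*$ adjunction and base-change naturality of Purity, with the counit $\epsilon\colon p_\sharp p^*(\bbP^n_k/\bbP^{n-1}_k) \to \bbP^n_k/\bbP^{n-1}_k$ precomposed with $p_\sharp$ of the composition defining $f_{\tilde{x}}$. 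Under Morel's identification $\operatorname{End}_{\Spt(k)}(1_k) \cong \GW(k)$, this composition is exactly the trace of $\deg f_{\tilde{x}}$, yielding the desired formula. The main obstacle is verifying that the categorical composition $\epsilon \circ p_\sharp(-) \circ \eta$ matches the algebraic trace $\Tr_{k(x)/k}$ on $\GW$ --- a standard consequence of the theory of motivic transfers (cf.\ \cite{Hoyois_lef}), but one requiring careful bookkeeping.
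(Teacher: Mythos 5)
Your argument is sound and rests on the same two pillars as the paper's: the identification of the collapse map with the unit $\eta$ (Lemma~\ref{collapse_is_unit}) and Hoyois's computation of categorical traces of Thom spectra over finite \'etale maps (\cite[Lemma 5.3, Theorem 1.9]{Hoyois_lef}). The organization differs, though. The paper does not separate out a rational case: it writes $\Th_f(df(x))$ as the composition of a monomorphism-induced map over $\Spec k(x)$ with the counit $\Th_p 1_{p^*\mathcal{O}^n_{\Spec k}}$ (the factorization \eqref{thfphi=comp} from the Conventions), stacks this under the Purity naturality square for $f|_U$, applies Lemma~\ref{collapse_is_unit} to recover $\eta$, and then reads off the result from Hoyois's Lemma~5.3 and Theorem~1.9 in one step. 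You instead first compute $\deg^{\bbA^1}_{\tilde x} f_{k(x)} = \langle J(x)\rangle$ over $k(x)$ and then establish a stand-alone transfer formula $\deg^{\bbA^1}_x f = \Tr_{k(x)/k}\bigl(\deg^{\bbA^1}_{\tilde x} f_{k(x)}\bigr)$. The transfer lemma you isolate is exactly the content of the paper's final diagram plus Hoyois's two results, so the ``careful bookkeeping'' you flag as the remaining obstacle is not a gap in your plan --- it is the very computation the paper carries out, merely bundled into one diagram chase rather than factored through an intermediate statement. Your presentation buys a cleaner conceptual separation (rational computation plus a reusable transfer formula) at the cost of stating and proving that formula separately; if you write it up, that lemma is worth recording on its own.
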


\begin{proof}

We work in $\Spt(k)$. Let $p: \Spec k(x) \to \Spec k$ denote the structure map. 

Since $f$ is \'etale at $x$, the induced map of tangent spaces $df(x): T_x \bbA_k^n \to f^* T_{f(x)} \bbA^n_k$ is a monomorphism. Thus $df(x)$ induces a map on Thom spectra, which factors as in the following commutative diagram (see Conventions \eqref{thfphi=comp}): \begin{equation}\label{Thdf_CD_in_loc_deg_et_pt_pf}\xymatrix{ \Th T_x \bbA_k^n \ar[rrrrr]^{\Th_f (df(x))} \ar[d]^{\cong}&&&&& \Th T_{f(x)} \bbA^n_k \ar[d]^{\cong} \\ \ar[u] \Th \mathcal{O}^n_{\Spec k(x)} \ar[rrr]^{\Th_{1_{\Spec k(x)}} \begin{pmatrix} \frac{\partial f_i}{\partial x_j} \end{pmatrix}} &&& \ar[rr]^{\Th_p 1_{p^*\mathcal{O}^n_{\Spec k} }} \Th \mathcal{O}^n_{\Spec k(x)}  && \ar[u] \Th \mathcal{O}^n_{\Spec k}} \end{equation}

The naturality of the Purity isomorphism  \cite[Lemma 2.1]{Voevodsky_MCZ2} gives the commutative diagram \begin{equation}\label{purity_in_loc_deg_etale_pt_pf}
\xymatrix{ U/(U-\{x\}) \ar[rr]^{f\vert_U} && \bbP^n_k / (\bbP^n_k - \{ f(x)\}) \\
\Th T_x U \ar[rr]^{\Th_f (df(x))} \ar[u]^{\cong}&& \Th T_{f(x)} \bbP^n_k \ar[u]^{\cong}.}
\end{equation}

The isomorphisms $T_x U \cong T_x \bbA_k^n$ and  $\Th T_{f(x)} \bbA^n_k \cong \Th T_{f(x)} \bbP^n_k $ allow us to stack Diagram \eqref{purity_in_loc_deg_etale_pt_pf} on top of Diagram \eqref{Thdf_CD_in_loc_deg_et_pt_pf}. We then expand the resulting diagram to express the map $f_x$ from Definition \ref{def:local_degree_f(x)_rational} in terms of $\begin{pmatrix} \frac{\partial f_i}{\partial x_j} \end{pmatrix}$.

\begin{equation}
\xymatrix{ \bbP^n_k/\bbP^{n-1}_k \ar@/^2pc/[rrrrr]^-{f_x} \ar[r] \ar[rrddd]_{ \bbP^n_k/\bbP^{n-1}_k\wedge \eta }&\bbP^n_k/\bbP^n_k - \{ x\} &\ar[l]^{\cong }U/(U-\{x\}) \ar[rr]^{f\vert_U} && \bbP^n_k / (\bbP^n_k - \{ f(x)\}) & \ar[l]^{\cong}  \bbP^n_k/\bbP^{n-1}_k \ar[dddl]^1\\
& & \Th T_x U \ar[rr]^{\Th_f (df(x))} \ar[u]^{\cong}&& \Th T_{f(x)} \bbP^n_k \ar[u]^{\cong} &\\
& &\Th \mathcal{O}^n_{\Spec k(x)} \ar[u]^{\cong} \ar[r] & \ar[r] \ldots& \Th \mathcal{O}^n_{\Spec k} \ar[u]^{\cong} &\\
& & \bbP^n_k/\bbP^{n-1}_k \wedge \Spec k(x)_+ \ar[u]^{\cong}&& \bbP^n_k/\bbP^{n-1}_k \ar[u]^{\cong} &.}
\end{equation}

We have applied Lemma \ref{collapse_is_unit} to identify the diagonal maps.  

We furthermore have an identification (see Conventions \eqref{Thf1_as_counit}) of $\Th_p 1_{p^*\mathcal{O}^n_{\Spec k} }$ with the composition \begin{equation*}
p_! \Sigma^{p^* \mathcal{O}^n_{\Spec k}} 1_{k(x)} \to \Sigma^{\mathcal{O}^n_{\Spec k}} p_! 1_{k(x)} \stackrel{\epsilon}{\rightarrow} \Sigma^{\mathcal{O}^n_{\Spec k}} 1_k .
\end{equation*} Since $p$ is \'etale, there is a canonical identification $p_! \cong p_{\sharp}$.

We may therefore identify $f_x$ with the composition 
$$\bbP^n_k/ \bbP^{n-1}_k \stackrel{\bbP^n_k/ \bbP^{n-1}_k \wedge \eta}{\rightarrow} \bbP^n_{k(x)}/ \bbP^{n-1}_{k(x)} \cong \Th \mathcal{O}^n_{\Spec k(x)} \stackrel{\begin{pmatrix} \frac{\partial f_i}{\partial x_j} \end{pmatrix}}{\to}  \Th \mathcal{O}^n_{\Spec k(x)} \cong  \bbP^n_{k(x)}/ \bbP^{n-1}_{k(x)}  \stackrel{\bbP^n_k/ \bbP^{n-1}_k \wedge \epsilon}{\rightarrow} \bbP^n_k/ \bbP^{n-1}_k$$

By \cite[Lemma 5.3]{Hoyois_lef}, we therefore have that $f_x $ is the trace of the endomorphism $\begin{pmatrix} \frac{\partial f_i}{\partial x_j} \end{pmatrix}$ of $\Th \mathcal{O}^n_{\Spec k(x)}$ in $\Spt(k)$. It follows that $\deg f_x = \Tr_{k(x)/k} \langle J \rangle$ by \cite[Theorem 1.9]{Hoyois_lef}.

\end{proof}

\section{Some finite determinacy results } \label{Section: FinitelyDetermined}
Here we prove a finite determinacy result and then use that result to prove a result, Proposition~\ref{Prop: ReduceToRegSeq_enhanced}, that allows us to reduce the proof of the Main Theorem to a case where $f$ is \'{e}tale at $0$.  In this section we fix a polynomial function $f \colon \bbA^{n}_{k} \to \bbA^{n}_{k}$ that has an isolated zero at the origin and write $f_1, \dots, f_n \in P$ for the component functions. 

The finite determinacy result is as follows.

\begin{df}
	Let $f, g \colon \bbA^{n}_{k} \to \bbA^{n}_{k}$ be polynomial functions.  Then we say that $f$ and $g$ are \textbf{equivalent at the origin} if both functions have isolated zeros at the origin and we have
	\begin{enumerate}
		\item  $Q_{0}(f)=Q_{0}(g)$ and $E_{0}(f) =E_{0}(g)$;
		\item $\deg^{\bbA^1}_{0}(f) = \deg^{\bbA^1}_{0}(g)$.
	\end{enumerate}
	
	We say that a polynomial function $f \colon \bbA^{n}_{k} \to \bbA^{n}_{k}$ with an isolated zero at the origin is \textbf{$b$-determined} if every polynomial function $g$ with the property that $f_i - g_i \in \mathfrak{m}_{0}^{b+1}$ for $i=1, \dots, n$ is equivalent to $f$.  We say that $f$ is \textbf{finitely determined} if it is $b$-determined for some $b \in \mathbb{N}$.
\end{df}

\begin{lm} \label{Lemma: FiniteDeterminacy}
	A polynomial function $f \colon \bbA^{n}_{k} \to \bbA^{n}_{k}$ with an isolated zero at the origin is finitely determined.
\end{lm}
\begin{proof}
	Since $Q_{0}(f)$ is a finite length quotient of $P_{\mathfrak{m}_{0}}$, its defining ideal must contain a power of the maximal ideal, say $\mathfrak{m}_{0}^{b} \subset (f_1, \dots f_n)$.  We will prove that any $g$ satisfying $f_i - g_i \in \mathfrak{m}_{0}^{b+1}$ satisfies the desired conditions.  To begin, we show the ideals $(f_1, \dots, f_n)$ and $(g_1, \dots, g_n)$ are equal.  By the choice of $b$, we have $(g_1, \dots, g_n) \subset (f_1, \dots, f_n)$, and to see the reverse inclusion, we argue as follows.  The elements $g_1, \dots, g_n$ generate $(g_1, \dots, g_n) + \mathfrak{m}_{0}^{b}$ modulo $\mathfrak{m}_{0}^{b+1}$ (since, modulo $\mathfrak{m}_{0}^{b+1}$, the $g_i$'s equal the $f_i$'s), so by Nakayama's lemma, the $g_i$'s generate $(g_1, \dots, g_n) + \mathfrak{m}_{0}^{b}$. In particular, $(g_1, \dots, g_n) \supset \mathfrak{m}_{0}^{b}$, and we conclude $(g_1, \dots, g_n) \supset (f_1, \dots, f_n)$.
	
	We immediately deduce $Q_{0}(f)= Q_{0}(g)$.  To see that $E_{0}(f) = E_{0}(g)$, observe that if we write $g_i = f_i + \sum b_{i,j} x_j$ with $b_{i, j} \in \mathfrak{m}_{0}^{b}$ and $f_i= \sum a_{i, j} x_j$, then $g_{i} = \sum (a_{i, j} + b_{i, j}) x_{j}$ and
	\[
		E_{0}(g) = \det( a_{i, j} + b_{i, j}).
	\]
	Since $a_{i, j}$ equals $a_{i, j} + b_{i, j}$ modulo $\mathfrak{m}^{b}_{0}$, these elements are equal modulo $(f_1, \dots, f_n) = (g_1, \dots, g_n)$.  Taking determinants, we conclude $E_{0}(f) = E_{0}(g)$.  This proves (1).
	
	We prove (2) by exhibiting an explicit naive $\bbA^1$-homotopy  between the maps $f'_{0}$ and $g'_{0}$ on Thom spaces. Write  
	\[
		g_i = \sum n_{i, j} f_{j} \text{ in $P_{\mathfrak{m}_{0}}$.}
	\]
	By definition, $f_i = g_i \text{ modulo $\mathfrak{m}_{0}^{b+1}$}$, hence modulo $\mathfrak{m}_{0} \cdot (f_1, \dots, f_n)$.  Moreover, $f_1, \dots, f_n$ is a basis for the $k$-vector space $(f_1, \dots, f_n)/ \mathfrak{m}_{0} \cdot (f_1, \dots, f_n)$, so the matrix $(n_{i, j})$ must reduce to the identity matrix modulo $\mathfrak{m}_{0}$, allowing us to write $(n_{i, j}) = \operatorname{id}_{n} +(m_{i, j})$ with $m_{i, j} \in \mathfrak{m}_{0}$.  Let $V \subset \bbA^{n}_{k}$ be a Zariski neighborhood of the origin such that the entries of the matrix $  \operatorname{id}_{n} + (m_{i, j})$ are restrictions of elements of $H^{0}(V, \calO)$ that we denote by the same symbols.  Now consider the matrix
	\[
		M(x,t) := \operatorname{id}_{n} +  ( t \cdot m_{i, j}(x))
	\]
	and the map 
	\begin{gather*}
		H \colon  V \times_{k} \bbA_{k}^{1} \to \bbA^{n}_{k}, \\
		(x,t) \mapsto M(x, t) \cdot f(x).
	\end{gather*}
	The  preimage $H^{-1}(0)$ contains $\{ 0 \} \times_{k} \bbA^{1}_{k}$ as a connected component.  Indeed, to see this is a connected component, it is enough to show that the subset is open.  To show this, observe that the complement set is 
	\begin{displaymath}
		\left( \{ \det(M)=0 \} \cup (f^{-1}(0) - 0) \times_{k} \bbA^{1}_{k} \right) \cap H^{-1}(0).
	\end{displaymath}
	The subset $\{ \det(M) =0 \}$ is closed in $V \times_{k} \bbA^{1}_{k}$ as $\det(M)$ is a regular function, and $f^{-1}(0) - \{ 0 \} \subset \bbA^{n}_{k}$ is closed as $0 \in f^{-1}(0)$ is a connected component by hypothesis.

	The map $H$ induces a map on quotient spaces
	\begin{equation} \label{Eqn: ConstructHomotopy}
		H \colon \frac{V \times \bbA^{1}_{k}}{V \times \bbA^{1}_{k} - H^{-1}(0)} 	\to \frac{\bbA^{n}_{k}}{\bbA^{n}_{k} - 0}.
	\end{equation}
	The quotient $\frac{V \times \bbA^{1}_{k}}{V \times \bbA^{1}_{k} - 0 \times \bbA^{1}_{k}}$ naturally includes into $\frac{V \times \bbA^{1}_{k}}{V \times \bbA^{1}_{k} - H^{-1}(0)}$ because, as $0 \times \bbA^{1}_{k}$ is a connected component of $H^{-1}(0)$,  $\frac{V \times \bbA^{1}_{k}}{V \times \bbA^{1}_{k} - H^{-1}(0)}$ is canonically identified with the  wedge sum of $\frac{V \times \bbA^{1}_{k}}{ V \times \bbA^{1}_{k} - 0 \times \bbA^{1}_{k}}$ and $\frac{V \times \bbA^{1}_{k}}{ V \times \bbA^{1}_{k} - W}$, where $W$ is the complement of $0 \times \bbA^{1}_{k}$ in $H^{-1}(0)$.  Consider now the composition of the inclusion with $H$:
	\begin{equation} \label{Eqn: ConstructHomotopy2}
		H \colon \frac{V \times \bbA^{1}_{k}}{V \times \bbA^{1}_{k} - 0 \times_{k} \bbA^{1}_{k}} 	\to \frac{\bbA^{n}_{k}}{\bbA^{n}_{k} - 0}.
	\end{equation}
	
	The spaces appearing in this last equation  are identified with the Thom spaces of normal bundles by the purity theorem, and these Thom spaces, in turn, are isomorphic to  smash  products with $\Th(\calO^{\oplus n}_{\Spec{k}})$ because the relevant normal bundles are trivial:
	\begin{align*}
		\frac{\bbA^{n}_{k}}{\bbA^{n}_{k} - 0}						 =& 	\Th(\{ 0 \} ) \\
														=& 	\{ 0 \}_{+} \wedge \Th(\calO_{\Spec(k)}^{\oplus n}), \\
		\frac{V \times \bbA^{1}_{k}}{V \times \bbA^{1}_{k} - 0 \times \bbA^{1}_{k}} 	=& \Th(0 \times \bbA^{1}_{k}) \\
																=& (0 \times \bbA^{1}_{k})_{+} \wedge \Th(\calO_{\Spec(k)}^{\oplus  n}).
	\end{align*}
	These identifications identify \eqref{Eqn: ConstructHomotopy2} with a naive $\bbA^1$-homotopy 
	\[	
		(0 \times \bbA^{1}_{k})_{+} \wedge \Th(\calO_{\Spec(k)}^{\oplus n}) \to \{ 0 \}_{+} \wedge \Th(\calO_{\Spec(k)}^{\oplus n})
	\]
	from $f'_{0}$ to $g'_0$.
\end{proof}

\begin{rmk}
	When $f$ is a polynomial function in $1$ variable, we can exhibit an explicit $b$.  Indeed, $f$ is $b$-determined provided $f$ contains a nonzero monomial of degree $b$.  To see this, take $b$ to be the least such integer and write $f = u \cdot x^{b}$ for $u \in k[x]$ a unit in $(P_{x})_{\mathfrak{m}_{0}}$.  The ideal $(x^b)$  lies in $(f)$, so the proof of Lemma~\ref{Lemma: FiniteDeterminacy} shows that $f$ is $b$-determined. We make use of this fact in the companion paper \cite{wickelgren16}.   
\end{rmk}

In the proof of the Main Theorem, we use Proposition~\ref{Prop: ReduceToRegSeq_enhanced} to reduce the proof to the special case where the following assumption holds:
\begin{assumption} \label{Assumption}
	The polynomial function $f $ is the restriction of a morphism $F \colon \bbP^{n}_{k} \to \bbP^{n}_{k}$ such that
	\begin{enumerate}
		\item $F$ is finite, flat, and with induced field extension $\operatorname{Frac} F_{*}\calO_{\bbP^{n}_{k}} \supset  \operatorname{Frac} \calO_{\bbP^{n}_{k}}$ of degree coprime to $\operatorname{char}(k)=p$;
		\item $F$ is \'etale at every point of $F^{-1}(0) - \{ 0 \}$;
		\item $F$ satisfies $F^{-1}(\bbA^n_{k}) \subset \bbA^{n}_{k}$.
	\end{enumerate}
\end{assumption}

To reduce to the special case, we need to prove that, after possibly passing from $k$ to an odd degree field extension, every $f$ is equivalent to a polynomial function satisfying Assumption~\ref{Assumption}, and we conclude this section with a proof of this fact.  The proof we give below is a modification of \cite[Theorem 4.1]{Becker}, a theorem about  real polynomial functions due to Becker--Cardinal--Roy--Szafraniec.  

We will show that if $f$ is a given polynomial function with an isolated zero at the origin, then for a general $h \in P$ that is homogeneous and of degree sufficiently large and coprime to $p$, the sum $f+h$ satisfies Assumption~\ref{Assumption}.  This result is Proposition~\ref{Prop: ReduceToRegSeq_enhanced}.  We prove that result as a result about the affine space $H^{d}_{k}$  parameterizing polynomial maps $h \colon \bbA^{n}_{k} \to \bbA^{n}_{k}$ given by $n$-tuples of homogeneous degree $d$ polynomials.  We show that the locus of $h$'s such that $f+h$ fails to  satisfy Assumption~\ref{Assumption} is not equal to $H^{d}_{k}$  by using the following three lemmas.

\begin{lm}\label{modify_to_give_endo_Pn}
The subset of $H^{d}_{k}$ corresponding to $h$'s such that $h^{-1}(0)=0$ is a nonempty Zariski open subset. 
\end{lm}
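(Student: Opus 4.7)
The plan is to prove openness and nonemptiness separately.

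For openness, I would introduce the incidence variety
\[
	Z := \{(h,[x]) \in H^{d}_{k} \times \bbP^{n-1}_{k} : h_{1}(x) = \cdots = h_{n}(x) = 0\}.
\]
Because each $h_{i}$ is homogeneous of degree $d$ in $x$, the conditions $h_{i}(x)=0$ are well-defined on $\bbP^{n-1}_{k}$, so $Z$ is a closed subscheme of $H^{d}_{k} \times_{k} \bbP^{n-1}_{k}$. Since $\bbP^{n-1}_{k}$ is proper over $k$, the projection $\pi \colon Z \to H^{d}_{k}$ is a proper morphism, and hence its image $\pi(Z)$ is closed in $H^{d}_{k}$. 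Because the $h_{i}$ are homogeneous, the scheme-theoretic zero locus $h^{-1}(0) \subset \bbA^{n}_{k}$ is a cone with vertex at the origin, so $h^{-1}(0) = \{0\}$ (set-theoretically) if and only if $h_{1}, \dots, h_{n}$ have no common zero in $\bbP^{n-1}_{k}$, that is, if and only if $h \notin \pi(Z)$. Hence the locus in question is the Zariski open complement $H^{d}_{k} \setminus \pi(Z)$.

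For nonemptiness, I would exhibit an explicit point: the tuple $h = (x_{1}^{d}, \dots, x_{n}^{d})$. Then $h^{-1}(0) = V(x_{1}^{d}, \dots, x_{n}^{d}) = V(x_{1}, \dots, x_{n}) = \{0\}$, so this $h$ lies in the open set.

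Neither step should present any real obstacle; the only thing to be careful about is making sure the incidence variety is defined on $\bbP^{n-1}_{k}$ (which uses homogeneity) so that properness gives closedness of the image. The rest is formal.
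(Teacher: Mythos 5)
Your proof is correct and follows essentially the same approach as the paper: both arguments construct the incidence variety of pairs $(h,[x])$ with $h_1(x)=\cdots=h_n(x)=0$, use properness of $\bbP^{n-1}_k$ to conclude the image in the parameter space is closed, and exhibit $(x_1^d,\dots,x_n^d)$ for nonemptiness. The only difference is cosmetic: the paper works with $\bbP(H^d_k)\times\bbP^{n-1}_k$ and then pulls back along $H^d_k-\{0\}\to\bbP(H^d_k)$, adjoining the origin by hand, whereas you work directly with $H^d_k\times\bbP^{n-1}_k$, which is a mild streamlining that avoids the separate treatment of $h=0$.
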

\begin{proof}
	The subset in question is nonempty since it contains e.g.~$(x_1^d, \dots, x_n^d)$.  To see that it is open, consider the image $I \subset \bbP( H^{d}_{k})$ of the incidence variety 
	\[
		\{ ([h_1, \dots, h_n], [x_1, \dots, x_{n}]) \colon h_{1}(x_1, \dots, x_n) = \dots = h_{n}(x_1, \dots, x_n)=0 \} \subset \bbP(H^{d}_{k}) \times \bbP_{k}^{n-1}.
	\]
	under  the projection $\bbP(H^{d}_{k}) \times \bbP_{k}^{n-1} \to \bbP (H^{d}_{k})$.  The subset $I$ is closed by the properness of the projection morphism.  The preimage of $I$ under the natural morphism $H^{d}_{k}-0 \to \bbP(H^{d}_{k})$ is closed in $H^{d}_{k}-0$, hence the union of the preimage of $I$ and $0$ is closed in $H^{d}_{k}$.  The complement of this closed subset is the subset of $h$'s such that $h^{-1}(0)=0$.
\end{proof}

The following lemma is used in Lemma~\ref{modify_to_have_regular_values} to bound a dimension.

\begin{lm} \label{Lemma: BoundFibers}
	Let $f \colon \bbA^{n}_{k} \to \bbA^{n}_{k}$ be a nonzero polynomial function satisfying $f(0)=0$ and  $a=(a_1, \dots, a_n) \in \bbA^{n}_{k}(k)$  a $k$-point that is not the origin $0$.  If $\sum \frac{\partial f_1}{\partial x_i}(a) \cdot a_i \ne d \cdot f_{1}(a)$ for $d$ an integer that is nonzero in $k$, then the subset of $H_{k}^{d}$ consisting of $h$'s satisfying
	\begin{align}
		f_{i}(a) + h_{i}(a) =& 0 \text{ for $i=1, \dots, n$,} \label{Eqn: RegSeqOne} \\
		\det \begin{pmatrix} \frac{\partial(f_i + h_i) }{\partial x_j}(a)  \end{pmatrix} =& 0 \label{Eqn: RegSeqTwo}
	\end{align}
	is a Zariski closed subset of codimension $n+1$.
\end{lm}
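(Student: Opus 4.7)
The plan is to show that the $n$ linear equations in \eqref{Eqn: RegSeqOne} and the single polynomial equation \eqref{Eqn: RegSeqTwo} cut independently, i.e., each drops the dimension by exactly one. Since each is a Zariski closed condition of codimension at most $1$, the full intersection is automatically Zariski closed of codimension at most $n+1$; the content is that this bound is attained.

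First I would verify that \eqref{Eqn: RegSeqOne} cuts out an affine subspace $V \subset H^{d}_{k}$ of codimension exactly $n$.  The $i$-th equation $h_i(a) + f_i(a) = 0$ depends only on $h_i$, which lives in an independent summand of $H^{d}_{k}$, and the linear functional $h_i \mapsto h_i(a)$ is nonzero because $a \ne 0$ implies some monomial $x_j^{d}$ takes a nonzero value at $a$.

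Next I would reformulate condition \eqref{Eqn: RegSeqTwo} restricted to $V$ using Euler's identity.  For $h \in V$ and each $i$, Euler's identity applied to the homogeneous polynomial $h_i$, combined with $h_i(a) = -f_i(a)$, yields
\[
\sum_{j=1}^{n} a_j \frac{\partial(f_i+h_i)}{\partial x_j}(a) \;=\; \sum_{j=1}^{n} a_j \frac{\partial f_i}{\partial x_j}(a) + d \cdot h_i(a) \;=\; \sum_{j=1}^{n} a_j \frac{\partial f_i}{\partial x_j}(a) - d\, f_i(a) \;=:\; \gamma_i .
\]
Letting $M(h)$ denote the Jacobian $\bigl(\frac{\partial(f_i+h_i)}{\partial x_j}(a)\bigr)$, this says $M(h)\cdot a = \gamma$ for every $h \in V$, and the standing hypothesis gives $\gamma_1 \ne 0$, whence $\gamma \ne 0$.

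Finally I would produce a specific $h^{\star} \in V$ with $\det M(h^{\star}) \ne 0$, showing that \eqref{Eqn: RegSeqTwo} is a nontrivial polynomial condition on $V$.  For this I would first verify that the linear map sending a degree-$d$ homogeneous polynomial $p$ to $(p(a), \nabla p(a)) \in k \oplus k^{n}$ surjects onto the Euler hyperplane $\{(c_0, \mathbf{c}) : d c_0 = a \cdot \mathbf{c}\}$.  This hyperplane has dimension $n$; if $a_j \ne 0$, then the $n$ polynomials $x_j^{d}$ and $x_j^{d-1} x_\ell$ (for $\ell \ne j$) map to linearly independent vectors, as one sees immediately by inspecting the partial-derivative coordinate in position $\ell \ne j$.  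Granting this surjectivity and applying it to each component $h_i$, the map $h \mapsto M(h)$ surjects from $V$ onto the set of matrices $\{M : Ma = \gamma\}$.  Since $\gamma \ne 0$, I can extend $a$ to a basis $\{a, v_2, \ldots, v_n\}$ of $k^{n}$ and $\gamma$ to a basis $\{\gamma, w_2, \ldots, w_n\}$ of $k^{n}$, and define $M$ by sending the first basis to the second; this $M$ is invertible and satisfies $Ma = \gamma$, producing the required $h^{\star}$. The main obstacle is the surjectivity of the evaluation-and-gradient map onto the Euler hyperplane, but this reduces to the explicit monomial computation above and is valid in every characteristic.
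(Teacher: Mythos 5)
Your proof is correct and follows the same overall strategy as the paper's: impose the $n$ affine-linear conditions \eqref{Eqn: RegSeqOne} to cut out a codimension-$n$ affine subspace $V \subset H^{d}_{k}$, then show the determinant \eqref{Eqn: RegSeqTwo} is a nontrivial polynomial condition on $V$. Where you diverge is in how you establish the second step. The paper normalizes $a = (1,0,\dots,0)$ by a linear change of coordinates, identifies a block of the coefficient variables of $\mathcal{O}_{H^{d}_{k}}$ with the entries of a generic $n \times n$ matrix, and substitutes the linear relations to fix the first column to the constant vector $\gamma = \bigl(\sum_j a_j \frac{\partial f_i}{\partial x_j}(a) - d\, f_i(a)\bigr)_i$; the hypothesis is precisely $\gamma_1 \ne 0$, so the resulting determinant is a nonzero polynomial in the remaining generic-matrix variables, and the regular-sequence/Krull argument finishes. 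You avoid the coordinate change by appealing to Euler's identity, which shows directly that the Jacobian $M(h)$ of every $h \in V$ satisfies $M(h)\,a = \gamma$, and then you argue that $h \mapsto M(h)$ surjects $V$ onto $\{M : Ma = \gamma\}$ (via the monomials $x_j^d$, $x_j^{d-1}x_\ell$) and exhibit an invertible $M$ with $Ma = \gamma$. Both proofs pivot on the same fact, $\gamma \ne 0$; your Euler-identity reformulation is coordinate-free and explains why the hypothesis takes the shape it does, while the paper's normalization makes the generic-matrix bookkeeping fully explicit. One small point on your independence check: to conclude that the image of $x_j^d$ is independent of the images of the $x_j^{d-1}x_\ell$, you should also look at the evaluation coordinate $p(a) = a_j^d \ne 0$ (or the $\partial_{x_j}$-coordinate), since the $\partial_{x_\ell}$-coordinates of $x_j^d$ for $\ell \ne j$ all vanish; this is the coordinate that survives when $\operatorname{char} k$ divides $d$.
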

\begin{proof}
	It is enough, by the Krull principal ideal theorem, to show that \eqref{Eqn: RegSeqOne} and \eqref{Eqn: RegSeqTwo}, considered as regular functions on $H_{k}^{n}$, form a regular sequence. Writing 
	\begin{gather*}
		h_1 = \sum c_{\underline{i}}(1) x_1^{i_1} x_{2}^{i_2} \dots x_{n}^{i_n}, \\
		h_2 = \sum c_{\underline{i}}(2) x_1^{i_1} x_{2}^{i_2} \dots x_{n}^{i_n}, \\
		\dots \\
		h_n = \sum c_{\underline{i}}(n) x_1^{i_1} x_{2}^{i_2} \dots x_{n}^{i_n}, \\
	\end{gather*}
	the coefficients $\{ c_{\underline{i}}(1), \dots, c_{\underline{i}}(n) \}$ are coordinates on the affine space $H_{k}^{d}$.  As polynomials in these coefficients, the elements $f_{1}(a)+ h_{1}(a), \dots, f_{n}(a) + h_{n}(a)$ from \eqref{Eqn: RegSeqOne} are affine linear equations, and distinct linear equations involve disjoint sets of variables, so we conclude that the first set of elements form a regular sequence with quotient equal to a polynomial ring.  In particular, the quotient is a domain, so to prove the lemma, it is enough to show that \eqref{Eqn: RegSeqTwo} has nonzero image in the quotient ring.
	
	To show this, first make a linear change of variables so  that $a = (1, 0,  \dots, 0)$.  Setting $\underline{i}_1 = (d, 0, \dots, 0)$, the elements \eqref{Eqn: RegSeqOne} take the form 
	\begin{displaymath} \label{Eqn: FirstElements}
		f_1(a)+c_{\underline{i}_{1}}(1), \dots, f_{n}(a)+c_{\underline{i}_{1}}(n),
	\end{displaymath}
	and if we let
	\begin{gather*}
		\underline{i}_{2} = (d-1, 1, 0, \dots, 0, 0), \\
		\underline{i}_{3} = (d-1, 0, 1, \dots, 0, 0), \\
		\dots \\
		\underline{i}_{n} = (d-1, 0, 0, \dots, 0, 1), \\
	\end{gather*}
	then \eqref{Eqn: RegSeqTwo} can be rewritten as 
	\[
		\det\begin{pmatrix}
			d \cdot c_{\underline{i}_{1}}(1) + \frac{\partial f_1}{\partial x_1}(a)	&	 c_{\underline{i}_{2}}(1) + \frac{\partial f_1}{\partial x_2}(a)	&	\dots		&	 c_{\underline{i}_{n}}(1) + \frac{\partial f_1}{\partial x_n}(a)	\\
			c_{\underline{i}_{1}}(2) + \frac{\partial f_2}{\partial x_1}(a)	&	d \cdot c_{\underline{i}_{2}}(2) + \frac{\partial f_2}{\partial x_2}(a)	&	\dots		&	c_{\underline{i}_{n}}(2) + \frac{\partial f_2}{\partial x_n}(a)	\\
			\vdots												&	\vdots												&	\ddots	&	\vdots		\\
			c_{\underline{i}_{1}}(n) + \frac{\partial f_n}{\partial x_1}(a)	&	c_{\underline{i}_{2}}(n) + \frac{\partial f_n}{\partial x_2}(a)	&	\dots		&	d \cdot c_{\underline{i}_{n}}(n) + \frac{\partial f_n}{\partial x_n}(a)
		\end{pmatrix}.
	\]
	This determinant is essentially the determinant of the general $n$-by-$n$ matrix $\det(x_{\alpha, \beta})$, as we now explain.
	
	Identify $\calO_{H_{k}^{d}}$ with $k[ x_{\alpha, \beta}]$  by setting, for $\alpha, \beta = 1, \dots, d$, the variable  $x_{\alpha, \beta}$ equal to the $(\beta, \alpha)$-th entry in the above matrix  (and, say, arbitrarily matching the remaining variables $c_{\underline{i}_{\beta}}(\alpha)$, $i_{\beta} \ne \underline{i}_{1}, \dots, \underline{i}_{n}$, in $\calO_{H_{k}^{d}}$ with the remaining variables in $k[x_{\alpha, \beta}]$).  This identification identifies the determinant under consideration with the determinant $\det(x_{\alpha, \beta})$ of the general $n$-by-$n$ matrix and identifies  the elements \eqref{Eqn: RegSeqOne} with linear polynomials, say $A_{1} x_{1, 1} + B_{1}, A_{2} x_{2, 1} + B_{2}, \dots, A_{n} x_{n, 1} + B_{n}$.  

Now consider $\det(x_{\alpha, \beta})$ as a function $\det(v_1, \dots, v_n)$ of  the column vectors.  Under the identification of  \eqref{Eqn: RegSeqTwo} with $\det(x_{\alpha, \beta})$, the image of \eqref{Eqn: RegSeqTwo} in the quotient ring is identified with $\det(\overline{v}_1, v_{2}, \dots, v_n)$ for  $\overline{v}_{1} = (-B_{1}/A_{1}, -B_{2}/A_{2}, \dots, -B_{n}/A_{n})$.  By the hypothesis $\sum \frac{\partial f_1}{\partial x_i}(a) \cdot a_i \ne d \cdot f_{1}(a)$, so   $-B_{1}/A_{1} \ne 0$ and $\overline{v}_{1}$ is not the zero vector.  We conclude that  $\det(\overline{v}_1,v_{2},  \dots, v_n)$ is nonzero because e.g.~we can extend $\overline{v}_{1}$ to a basis  $\overline{v}_{1}, \dots, \overline{v}_{n}$ and then $\det(\overline{v}_{1}, \overline{v}_{2} \dots, \overline{v}_{n}) \ne 0 $ by the fundamental property of the determinant.
\end{proof}

\begin{lm}\label{modify_to_have_regular_values}
Let $f \colon \bbA^{n}_{k} \to \bbA^{n}_{k}$ be a nonzero polynomial map satisfying $f(0)=0$ and $d$ an integer greater than the degrees of $f_1, \dots, f_n$ and coprime to $p$.  Then the subset $S \subset H^{d}_{k}$ of $h$'s such that  $f+h$ is \'{e}tale at every point of $(f+h)^{-1}(0) - 0$ contains a nonempty Zariski open subset.
\end{lm}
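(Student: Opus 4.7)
The plan is to build an incidence variety and perform a dimension count, with Lemma \ref{Lemma: BoundFibers} supplying the key fiber bound.

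Consider
\[
Z := \{(h, a) \in H^d_k \times (\bbA^n_k \setminus \{0\}) : (f+h)(a) = 0,\ \det(\partial(f_i+h_i)/\partial x_j(a)) = 0\},
\]
with projections $\pi_1 \colon Z \to H^d_k$ and $\pi_2 \colon Z \to \bbA^n_k \setminus \{0\}$. The complement of $S$ in $H^d_k$ coincides with $\pi_1(Z)$, so it suffices to show $\dim Z < \dim H^d_k$; then $S$ contains the nonempty open complement of $\overline{\pi_1(Z)}$.

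To bound $\dim Z$, project via $\pi_2$. For each $a \in \bbA^n_k \setminus \{0\}$, the fiber $\pi_2^{-1}(a) \subseteq H^d_k$ is cut out by $n+1$ regular functions: the $n$ linear forms $(f+h)_i(a)$, which involve pairwise disjoint subsets of the coefficients of $h$ and so trivially define a linear subspace of codimension $n$, together with the Jacobian equation. Lemma \ref{Lemma: BoundFibers}, applied after relabeling the components of $f$ so that its hypothesis concerns the relevant $f_i$, then upgrades this bound to codimension $n+1$ on the open locus $U \subseteq \bbA^n_k \setminus \{0\}$ of points $a$ at which some index $i$ satisfies $\sum_j \partial f_i/\partial x_j(a) \cdot a_j \neq d \cdot f_i(a)$.

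Next, verify that $U$ is nonempty. Decomposing $f_i = \sum_\ell f_i^{(\ell)}$ into homogeneous parts and applying Euler's identity, the condition defining the complement of $U$ becomes $\sum_\ell (\ell - d) f_i^{(\ell)} \equiv 0$ as polynomials for every $i$. Since $f \neq 0$, $f(0) = 0$, and $\gcd(d, p) = 1$, one selects an index $i$ and an exponent $\ell$ with $f_i^{(\ell)} \neq 0$, $1 \leq \ell < d$, and $p \nmid (d - \ell)$; for that $i$ the polynomial is not identically zero, so $U^c$ is a proper closed subset of $\bbA^n_k \setminus \{0\}$ and $\dim U^c \leq n-1$.

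Combine the estimates: on irreducible components of $Z$ dominating $\bbA^n_k \setminus \{0\}$ via $\pi_2$, the generic fiber has dimension at most $\dim H^d_k - (n+1)$, so each such component has dimension at most $n + \dim H^d_k - (n+1) = \dim H^d_k - 1$. On components whose $\pi_2$-image lies in $U^c$, the trivial codimension-$n$ bound coming from the linear equations together with $\dim U^c \leq n-1$ gives dimension at most $(n-1) + (\dim H^d_k - n) = \dim H^d_k - 1$. Hence $\dim Z \leq \dim H^d_k - 1$, proving the lemma.

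The main delicacy is verifying $U \neq \emptyset$ in small positive characteristic: in characteristic zero or when $p > d$ every $\ell - d$ with $0 < \ell < d$ is nonzero in $k$ and the argument is immediate, whereas for small $p$ the coprimality of $d$ with $p$ must be combined with the homogeneous structure of $f$ to rule out the degenerate configuration in which every nonzero component $f_i^{(\ell)}$ satisfies $\ell \equiv d \pmod p$.
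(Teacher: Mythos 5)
Your proof follows the paper's argument step for step: same incidence variety, same appeal to Lemma~\ref{Lemma: BoundFibers}, same split of the dimension count over the good locus (where the fibers of $\pi_2$ have codimension $n+1$) and the bad locus (where one only gets codimension $n$). Your only variant—taking $U$ to be the union over all indices $i$ rather than fixing $i=1$ as the paper does—is an improvement only in principle, since the step that actually carries the argument is the one you yourself flag: proving that $U$ is nonempty.

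That step genuinely fails, and your final paragraph's assertion that "the coprimality of $d$ with $p$ must be combined with the homogeneous structure of $f$ to rule out the degenerate configuration in which every nonzero component $f_i^{(\ell)}$ satisfies $\ell \equiv d \pmod p$" is not substantiated and cannot be, because that configuration does occur under the stated hypotheses. Take $n=1$, $\operatorname{char} k = 3$, $f(x)=x$, and $d=4$ (so $d > \deg f$ and $\gcd(d,3)=1$). The only nonzero homogeneous piece is in degree $\ell=1$, and $1 \equiv 4 \pmod 3$, so $U^c = \bbA^1_k \setminus \{0\}$ and the bound $\dim Z \le \dim H^d_k$ is not strict. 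Worse, the conclusion of the lemma is simply false here: for any $c \ne 0$, $f+cx^4 = x(1+cx^3)$ has its unique nonzero zero $a = (-1/c)^{1/3}$ as a triple root, and $(f+cx^4)'(a) = 1 + 4c a^3 = 1 - 1 = 0$; so $S = \{0\}$, which contains no nonempty open. (In $n$ variables, $f=(x_1,\dots,x_n)$ with $p=3$, $d=4$ gives the same degeneracy.) The paper's proof contains the identical unjustified step—it asserts that $\sum_i \frac{\partial f_1}{\partial x_i}(a)a_i - d f_1(a)$ is a nonzero polynomial in $a$ "because $d$ is coprime to $p$ and strictly larger than the degree of $f_1$," but by Euler's identity this is $\sum_\ell (\ell-d) f_1^{(\ell)}(a)$, which vanishes identically whenever every nonzero $f_1^{(\ell)}$ has $\ell \equiv d \pmod p$. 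The lemma can likely be salvaged in the form actually needed by Proposition~\ref{Prop: ReduceToRegSeq_enhanced}—one can first replace $f$ by an equivalent map (in the sense of Lemma~\ref{Lemma: FiniteDeterminacy}) whose nonzero homogeneous pieces occupy at least two residue classes mod $p$, and then pick $d$ accordingly—but neither your proposal nor the paper makes that repair, and as stated the lemma is false.
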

\begin{proof}
We prove the lemma by proving that the Zariski  closure of the complement of $S$ in $H^{d}_{k}$ has dimension strictly smaller than $\dim H^{d}_{k}$, hence the complement of $S$ cannot be Zariski dense.  

Consider 
\[
	\Delta  = \left\{ (h, a) \colon (f+h)(a)=0, \det \begin{pmatrix} \frac{\partial(f_i + h_i) }{\partial x_j}(a)  \end{pmatrix}  = 0, a \ne 0 \right\} \subset H_{k}^{d} \times (\bbA^{n}_{k}-0).
\]
The  complement of $S$ is the image $\pi_{1}(\Delta)$ of $\Delta$ under the first projection $\pi_1 \colon H_{k}^{d} \times \bbA^{n}_{k} \to H_{k}^{d}$.  We bound dimensions by analyzing the second projection $\pi_2 \colon H_{k}^{d} \times \bbA^{n}_{k} \to \bbA^{n}_{k}$.  

To bound the dimension, we argue as follows.  Some $f_i$ is nonzero since $f$ is nonzero, and without loss of generality, we can assume $f_1 \ne 0$.  Because $d$ is coprime to $p$ and strictly larger than the degree of $f_1$, the polynomial $\sum \frac{\partial f_1}{\partial x_i}(x) \cdot x_i - d \cdot f_{1}(x)$ is nonzero (by Euler's identity).  We conclude that
\[
	B := \{  a \in \bbA^{n}_{k} \colon  \sum \frac{\partial f_1}{\partial x_i}(a) \cdot a_i = d \cdot f_{1}(a) \}.
\]
has codimension $1$ in $\bbA^{n}_{k}$.  We separately bound $\Delta \cap \pi_{2}^{-1}(B)$ and $\Delta \cap \pi_2^{-1}  (\bbA^n_k - B)$.

The fibers of $\pi_{2} \colon \Delta - \pi_{2}^{-1}(B) \to \bbA^{n}_{k}-B$  have codimension $n+1$ by Lemma~\ref{modify_to_give_endo_Pn}, so by \cite[Proposition~5.5.2]{egaIV_2}, we have
\begin{align*}
	\dim \Delta \cap \pi_2^{-1}  (\bbA^n_k - B) 		\leq&		\dim  (\bbA^n_k - B) + \dim \Delta_a \\
										=&	 	n + \dim H_k^d - (n+1)\\
										<&  \dim H_k^d.
\end{align*}

By similar reasoning
\begin{align*}
	\dim \pi_2^{-1}(B) \cap \Delta 		\leq&  \dim B + \dim \Delta_{a} \\
								\leq&  n-1 +   \dim H_k^d - n \\
								<& \dim H_k^d.
\end{align*}

We conclude that $\pi_{1} \colon \Delta \to H_{k}^{d}$ cannot be dominant for dimensional reasons \cite[Theorem~4.1.2]{egaIV_2}.  The complement of the closure of $\pi_{1}(\Delta)$ can thus be taken as the  desired Zariski open subset.
\end{proof}

\begin{pr} \label{Prop: ReduceToRegSeq_enhanced}
	Let $f \colon \bbA^{n}_{k} \to \bbA^{n}_{k}$ be a nonzero polynomial function satisfying $f(0)=0$. Then there exists an odd degree extension $L/k$ such that $f \otimes_{k} L$ is equivalent to a function satisfying Assumption~\ref{Assumption}.  If $k$ is infinite, we can take $L=k$.
\end{pr}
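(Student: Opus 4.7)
The plan is to perturb $f$ by a generic homogeneous polynomial map $h$ of sufficiently large degree $d$, so that $g = f + h$ is automatically equivalent to $f$ at the origin and additionally extends to an endomorphism $F$ of $\bbP^n_k$ satisfying Assumption~\ref{Assumption}. The three preceding lemmas are tailored precisely for this construction.

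First I would fix a determinacy exponent $b$ for $f$ coming from Lemma~\ref{Lemma: FiniteDeterminacy} and choose an integer $d$ with $d > \max(b, \deg f_1, \dots, \deg f_n)$ and $\gcd(d, \operatorname{char} k) = 1$. Every $h \in H^d_k$ has components in $\mathfrak{m}_0^d \subset \mathfrak{m}_0^{b+1}$, so $f + h$ is equivalent to $f$ at the origin for \emph{any} $h \in H^d_k$; the problem therefore reduces to picking $h$ so that $f + h$ satisfies Assumption~\ref{Assumption}.

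Next I would define $F \colon \bbP^n_k \to \bbP^n_k$ by $[X_0 : X_1 : \dots : X_n] \mapsto [X_0^d : \tilde F_1 : \dots : \tilde F_n]$, where $\tilde F_i$ is the degree-$d$ homogenisation of $f_i + h_i$. Observe $\tilde F_i(0, X_1, \dots, X_n) = h_i$, so the hypothesis $h^{-1}(0) = \{0\}$---cutting out a nonempty Zariski open $U_1 \subset H^d_k$ via Lemma~\ref{modify_to_give_endo_Pn}---ensures $F$ is a well-defined morphism. Because $F$ is given by sections of the ample bundle $\calO(d)$, its fibres are zero-dimensional, hence $F$ is finite (adjoining properness) and flat (miracle flatness, since $\bbP^n_k$ is Cohen--Macaulay and the fibres are equidimensional). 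Bezout gives $\deg F = d^n$, coprime to $\operatorname{char} k$, and $F^{-1}\{X_0 = 0\} = \{X_0 = 0\}$ set-theoretically yields $F^{-1}(\bbA^n_k) \subset \bbA^n_k$. Condition (2) of Assumption~\ref{Assumption}---étaleness at the nonzero points of $F^{-1}(0)$---is exactly the content of Lemma~\ref{modify_to_have_regular_values}, which provides a nonempty Zariski open $U_2 \subset H^d_k$ where the condition holds.

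Finally I would produce an odd-degree extension $L/k$ with $U(L) \neq \emptyset$, where $U := U_1 \cap U_2$. If $k$ is infinite, the Zariski-dense open $U$ has $U(k) \neq \emptyset$ and $L = k$ suffices. If $k = \bbF_q$ is finite, an elementary point count gives $|U(\bbF_{q^m})| \geq q^{mN} - O(q^{m(N-1)})$ with $N = \dim H^d_k$, so $U(\bbF_{q^m})$ is nonempty for every sufficiently large $m$; choosing $m$ odd yields the required $L = \bbF_{q^m}$. Any $h \in U(L)$ then produces a polynomial function $(f \otimes_k L) + h$ which satisfies Assumption~\ref{Assumption} and is equivalent to $f \otimes_k L$ at the origin. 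The main technical content---that $h^{-1}(0) = \{0\}$ and the étale condition at nonzero zeros of $f+h$ are simultaneously generic---has already been dispatched in the preceding lemmas, so the remaining work is the bookkeeping on finiteness/flatness/degree for $F$ and the routine point count securing the odd-degree extension.
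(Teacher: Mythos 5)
Your proposal follows the paper's own argument in all essential respects: same choice of $d$ (coprime to the characteristic and exceeding both $b$ and the degrees of the $f_i$), same invocation of Lemmas~\ref{modify_to_give_endo_Pn} and~\ref{modify_to_have_regular_values} to cut out a nonempty Zariski open $U \subset H^d_k$, same homogenisation to extend $f+h$ to $F \colon \bbP^n_k \to \bbP^n_k$, same verification of finiteness/flatness/degree, and same handling of the finite-field case by passing to a large odd-degree extension (the paper observes that $\bigcup_{m \text{ odd}} \bbF_{q^m}$ is infinite rather than counting points, but these amount to the same thing). The only cosmetic divergence is citing miracle flatness rather than the Matsumura corollary that finite self-maps of $\bbP^n$ are flat; both are valid.
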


\begin{proof}
The function $f$ is finitely determined by Lemma~\ref{Lemma: FiniteDeterminacy}, so say it is $b$-determined for $b \in \bbZ$.  Choose $d$ to be an integer coprime to $p$ and larger than both $b$ and the degrees of the $f_i$'s.   We claim that there exists an odd degree field extension $L/k$ and a  degree $d$ homogeneous polynomial function  $h \in H_{k}^{d}(L)$  such that $h^{-1}(0) = \{ 0 \}$ and $g := (f \otimes_{k} L)+h$ is \'{e}tale at every point of $g^{-1}(0)-0$.  To verify the claim, observe that  Lemmas \ref{modify_to_give_endo_Pn} and \ref{modify_to_have_regular_values} imply that the subset of all such $h$'s contains a nonempty Zariski open subset $U \subset H_{k}^{d}$.  If $k$ is an infinite field, $U(k)$ must be non-empty, so we take $L=k$.  Otherwise, $k$ is a finite field, say $k=\bbF_{q}$. We then have that $U(\bbF_{q^n})$ is nonempty for $n$ a sufficiently large odd number as $U(\overline{\bbF}_{q}) = \cup U(\bbF_{q^n})$.  The function $f \otimes_{k} L $ is also $b$-determined, and we complete the proof by showing that  $g := (f\otimes_{k}L) +h$  satisfies Assumption~\ref{Assumption}. 

To ease notation, we only give the proof in the case $L=k$ (the general case involves only notational changes).  Define degree $d$ homogeneous polynomials 
\begin{align*}
		G_0 :=& X_0^d,   \\
		G_1 :=& X_0^{d}\cdot f_1(X_1/X_0, \dots, X_n/X_0)+h_1(X_1, \dots, X_n),  \\
		\dots  \\
		G_n :=& X_0^{d}\cdot f_n(X_1/X_0, \dots, X_n/X_0)+h_n(X_1, \dots, X_n).  
\end{align*} 
The only solution $(X_0, \dots, X_n)$ to $G_0 = G_1 = \dots = G_n=0$ over $\overline{k}$ is $(0, \dots, 0)$ because from the first equation we deduce $X_{0}=0$ and then from the remaining equations we deduce that $(X_1, \dots, X_n)$ lies in $h^{-1}(0) = \{ 0 \}$.  It follows from general formalism that  $G = [G_0, G_1, \dots, G_n]$ defines a morphism $G \colon \bbP^{n}_{k} \to \bbP^{n}_{k}$ such that $G^{*}(\calO(1)) = \calO(d)$.  Moreover, by construction $G$ is an extension of $g$ that satisfies $G^{-1}(\bbA^{n}_{k}) \subset \bbA^{n}_{k}$.

To complete the proof, we need to show that $G$ is finite, flat, and induces a field extension $\operatorname{Frac} \calO_{\bbP^{n}_{k}} \subset \operatorname{Frac} G_{*}\calO_{\bbP^{n}_{k}}$ of degree coprime to $p$. To see that $G$ is finite, observe that the pullback $G^{-1}(\mathcal{H})$ of a hyperplane $\mathcal{H} \subset \bbP^{n}_{k}$ has positive degree on every curve (since the associated line bundle is $G^{*} \calO(1) = \calO(d)$, an ample line bundle).  We conclude that a fiber of $G$ cannot contain a curve since $G^{-1}(\mathcal{H})$ can be chosen to be disjoint from a given fiber.  In other words, $G$ has finite fibers. Being a morphism of projective schemes, $G$ is also proper and hence finite by Zariski's main theorem.  This implies that $G$ is flat since every finite morphism $\bbP^{n}_{k} \to \bbP^{n}_{k}$ is flat by  \cite[Corollary to Theorem~23.1]{Matsumura_CRT}.  

Finally, we complete the proof by noting that the degree of $\operatorname{Frac} \calO_{\bbP^{n}_{k}} \subset \operatorname{Frac} G_{*}\calO_{\bbP^{n}_{k}}$ equals  $d^n$, the top intersection number $G^{*}(\mathcal{H}^{n}) = \int c_{1}(\calO(d))^{n}$.  (To deduce the equality, observe that $\calH^{n}$ is the class of a $k$-point $\overline{y} \in \bbP^{n}_{k}(k)$, so the top intersection number is the $k$-rank of $\calO_{G^{-1}(\overline{y})}$, the stalk of $G_{*}\calO_{\bbP^{n}_{k}}$ at $\overline{y}$.  The rank of that stalk is equal to the rank of any other stalk of $G_{*}\calO_{\bbP^{n}_{k}}$  since  $G_{*}\calO_{\bbP^{n}_{k}}$, being finite and flat, is locally free.  In particular, that rank equals the rank of the generic fiber, which is  the degree of $\operatorname{Frac} G_{*}\calO_{\bbP^{n}_{k}} \supset \operatorname{Frac} \calO_{\bbP^{n}_{k}}$.)
\end{proof}

\section{The family of symmetric bilinear forms}  \label{Section: Family}
In this section we construct, for a given finite polynomial map $f \colon \bbA^{n}_{k} \to \bbA^{n}_{k}$, a family of symmetric bilinear forms over $\bbA^{n}_{k}$ such that the fiber over the origin contains  a summand that represents the ELK class $w_{0}(f)$.  This family has the property that the stable isomorphism class of the fiber over $\overline{y} \in \bbA^{n}_{k}(k)$ is independent of $\overline{y}$, and we use this property in Section~\ref{Section: PrOfMainThm} to compute $w_{0}(f)$ in terms of a regular value.  Finally, we compute the stable isomorphism class of the family over an \'{e}tale fiber.

Throughout this section $f$ denotes a finite polynomial map, except in Remark~\ref{Remark: WhyFiniteAssumption} where we explain what happens if the finiteness condition is weakened to quasi-finiteness.

The basic definition is the following.
\begin{df} \label{Def: FamilyOfForms}
	Define the \textbf{family of algebras} $\widetilde{Q} = \widetilde{Q}(f)$ associated to $f$ by 
	\begin{align*}
		\widetilde{Q} :=& f_{*}\calO_{\bbA^{n}_{k}}.
	\end{align*}
\end{df}
Concretely $\widetilde{Q}$ is the ring $P_{x}$ considered as a $P_{y}$-algebra by the homomorphism $y_1 \mapsto f_{1}(x), \dots, y_n \mapsto f_{n}(x)$ or equivalently the algebra $P_y[x_1, \dots, x_n]/(f_1(x)-y_1, \dots, f_{n}(x)-y_n)$.    Given $(\overline{y}_{1}, \dots, \overline{y}_{n}) = \overline{y} \in \bbA^{n}_{k}(L)$ for some field extension $L/k$, the fiber $\widetilde{Q} \otimes k(\overline{y})$ is the $L$-algebra $L[x_1, \dots, x_n]/(f_{1}(x)-\overline{y}_{1}, \dots, f_{n}(x)-\overline{y}_{n})$.  This algebra decomposes as
\[
	\widetilde{Q} \otimes k(\overline{y}) = Q_{x_1}(f) \times \dots \times Q_{x_m}(f),
\]
where $Q_{x}(f)$ is as in Definition~\ref{Section: LocalForm} and the product runs over all closed points $x \in f^{-1}(\overline{y})$.

The algebra $\widetilde{Q}$ has  desirable properties because we have assumed that $f$ is finite.
\begin{lm} \label{Lemma: RelativeCompleteIntersection}
	The elements $f_1(x)-y_1, \dots, f_{n}(x) - y_n \in P_{y}[x_1, \dots, x_n]$ form a regular sequence, and $\widetilde{Q}$ is $P_{y}$-flat.
\end{lm}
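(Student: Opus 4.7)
The plan is to identify $\widetilde{Q}$ explicitly, and then deduce both assertions at once. Because every $y_i$ is congruent to $f_i(x)$ modulo $(f_1(x)-y_1,\dots,f_n(x)-y_n)$, the substitution
\[
\sigma\colon P_y[x_1,\dots,x_n] \longrightarrow P_x, \qquad y_i \mapsto f_i(x), \quad x_j \mapsto x_j,
\]
descends to a surjection $\widetilde{Q}\twoheadrightarrow P_x$, which I claim is an isomorphism.

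For the regular sequence statement, I would argue inductively. The element $y_1-f_1(x)$ is monic of degree one in the variable $y_1$, hence a nonzero-divisor in $P_y[x_1,\dots,x_n]$, and the quotient is canonically the polynomial ring $k[y_2,\dots,y_n,x_1,\dots,x_n]$ via $y_1\mapsto f_1(x)$. In this quotient $y_2-f_2(x)$ is again monic of degree one in $y_2$, hence a nonzero-divisor, and so on. After $n$ such steps, the sequence $(f_i(x)-y_i)_{i=1}^n$ has been proven regular and the final quotient has been identified with $P_x$, confirming $\widetilde{Q}\cong P_x$ along the way.

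For flatness, the map $P_y \to \widetilde{Q}\cong P_x$ is finite by hypothesis, and both source and target are regular of Krull dimension $n$. Matsumura's miracle flatness theorem (the Corollary to Theorem~23.1 of \cite{Matsumura_CRT}, already invoked in Section~\ref{Section: FinitelyDetermined}) then yields the result: at any maximal ideal $\mathfrak{n}\subset P_y$ sitting under a maximal ideal $\mathfrak{m}\subset P_x$, finiteness of $f$ forces the fiber dimension to be zero, so $\dim(P_x)_{\mathfrak{m}} = \dim(P_y)_{\mathfrak{n}}+\dim(P_x/\mathfrak{n}P_x)_{\mathfrak{m}}$ and miracle flatness applies.

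The only subtlety to flag is the role of the finiteness hypothesis: the identification $\widetilde{Q}\cong P_x$ and the regular sequence claim hold verbatim without it, but the dimension-matching step required for miracle flatness is where finiteness is essential. I do not expect any other obstacles beyond this bookkeeping.
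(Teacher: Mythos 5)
Your proof is correct but takes a genuinely different route from the paper's. You establish the regular sequence claim by the elementary observation that each $y_i - f_i(x)$ is monic linear in $y_i$, which incidentally exhibits the isomorphism $\widetilde{Q}\cong P_x$ and, as you rightly flag, needs no finiteness hypothesis; you then derive flatness separately, from miracle flatness applied to the finite map $P_y\to P_x\cong\widetilde{Q}$ between $n$-dimensional regular rings with zero-dimensional fibers. The paper instead deduces both assertions simultaneously from a local-criterion-with-slicing argument: it reduces to showing the sequence becomes regular in the fiber ring $(P_y/\mathfrak{m})[x_1,\dots,x_n]$ for each maximal ideal $\mathfrak{m}\subset P_y$, notes that the fiber $f^{-1}(\mathfrak{m})$ is zero-dimensional by finiteness (so the ideal has height $n$), and invokes Cohen--Macaulayness of the polynomial ring via \cite[Theorem~17.4(i)]{Matsumura_CRT} to conclude the fiberwise sequence is regular. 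Your decomposition has the virtue of isolating precisely where finiteness enters (only flatness), whereas the paper's single fibered computation makes finiteness an input to both conclusions at once; both arguments are valid and roughly the same length.
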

\begin{proof}
	It is enough to show that, for any maximal ideal $\mathfrak{m} \subset P_{y}$, the images of  $y_1-f_{1}(x),\dots, y_n - f_{n}(x)$ in $(P_{y}/\mathfrak{m})[x_1, \dots, x_n]$ form a regular sequence by \cite[first Corollary, page~177]{Matsumura_CRT}.  The quotient of $P_{y}[x_1, \dots, x_n]$ by the sequence is the structure ring of $f^{-1}(\mathfrak{m})$, which is $0$-dimensional by hypothesis.  In particular, the images of $y_1-f_1(x), \dots, y_n-f_{n}(x)$ generate a height $n$ ideal, and hence they form a regular sequence by \cite[Theorem~17.4(i)]{Matsumura_CRT}.
\end{proof}

As we mentioned in Section~\ref{Section: LocalForm}, Scheja--Storch have constructed a distinguished symmetric bilinear form $\beta_0$ on $Q_{0}(f)$ that represents $w_{0}(f)$.  In fact, they construct a family $\widetilde{\beta}$ of symmetric bilinear forms on $\widetilde{Q}$ such that the fiber over $0$ contains a summand that represents $w_{0}(f)$.  The family is defined as follows. 

\begin{df}	\label{Definition: SSForm}
	Let  $\widetilde{\eta} \colon \widetilde{Q} \to P_{y}$  be the generalized trace function, the $P_y$-linear function defined on \cite[page~182]{scheja}. Let  $\widetilde{\beta}$ be the symmetric bilinear form $\widetilde{\beta} \colon \widetilde{Q} \to P_{y}$ defined by $\widetilde{\beta}(\widetilde{a}_{1}, \widetilde{a}_{2}) = \widetilde{\eta}(\widetilde{a}_{1} \cdot \widetilde{a}_{2})$.  Given $\overline{y} \in \bbA^{n}_{k}(L)$ and $x \in f^{-1}(\overline{y})$, we write $\eta_{x}$ and $\beta_{x}$ for  the respective restrictions to $Q_{x}(f) \subset \widetilde{Q} \otimes k(\overline{y})$.  We write $w_{x}(f) \in \operatorname{GW}(k)$ for the isomorphism class of $(Q_{x}(f), \beta_{x})$.
\end{df}

\begin{rmk}\label{rmk_eta_comments}
	We omit the definition of $\widetilde{\eta}$ because it is somewhat involved and we do not make direct use of it.  We do make use of three properties of $\widetilde{\eta}$.  First, the homomorphism has strong base change properties.  Namely, for a noetherian ring $A$ and an $A$-finite quotient $B$ of $A[x_1, \dots, x_n]$ by a regular sequence, Scheja--Storch construct a distinguished $A$-linear function $\eta_{B/A} \colon B \to A$ in a manner that is compatible with extending scalars by an arbitrary homomorphism $A \to \overline{A}$ \cite[page~184, first paragraph]{scheja}.  Second, the pairing $\beta$ is nondegenerate by \cite[Satz~3.3]{scheja}.  Finally, the restriction $\eta_0 \colon Q_{0}(f) \to k$ of $\widetilde{\eta}$ satisfies the condition from Definition~\ref{Definition: EKLform} (by Lemma~\ref{Lemma: EtaSatisfiesEKL} below).  In particular, the definition of $w_{0}(f)$ in Definition~\ref{Definition: EKLform} agrees with the definition of $w_{x}(f)$  from Definition~\ref{Definition: SSForm} when $x=0$.
		
	The reader familiar with \cite{eisenbud77}  may recall that in that paper, where $k=\bbR$, the authors do not make direct use of Scheja--Storch's work but rather work directly with the functional on $\widetilde{Q}$ defined by $a \mapsto \operatorname{Tr}(a/J)$.  Here $\operatorname{Tr}$ is the trace function of the field extension $\operatorname{Frac} \widetilde{Q} / \operatorname{Frac} P_{y}$.  We do not use the function $a \mapsto \operatorname{Tr}(a/J)$ because it is not well-behaved in characteristic $p>0$ since e.g.~the trace can be identically zero.
\end{rmk}

We now describe the properties of the family $(\widetilde{Q}, \widetilde{\beta})$.

\begin{lm} \label{Lemma: EtaSatisfiesEKL}
	The distinguished socle element $E$  satisfies $\eta_{0}(E)=1$.  
\end{lm}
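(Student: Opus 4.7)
The plan is to extract the statement directly from the description of Scheja--Storch's construction already used in the proof of Lemma~\ref{Lemma: SocleElement}. Recall from \cite[\S4]{scheja} that the restriction $\eta_0$ of the generalized trace $\widetilde{\eta}$ to the fiber $Q = Q_0(f)$ at the origin is, by construction, the functional witnessing the Gorenstein property of $Q$: the $Q$-module isomorphism $\Theta \colon \Hom_k(Q,k) \xrightarrow{\cong} Q$ appearing in Lemma~\ref{Lemma: SocleElement} is the one characterized by $\Theta^{-1}(a) = \eta_0(a\cdot -)$. The identity $\Theta(\pi) = E$ established in the proof of \cite[(4.7)~Korollar]{scheja} and recalled in the proof of Lemma~\ref{Lemma: SocleElement} therefore reads $\pi(b) = \eta_0(E\cdot b)$ for every $b \in Q$, where $\pi \colon Q \to k$ is evaluation at the origin.

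Setting $b = 1$ in this identity gives $\eta_0(E) = \pi(1) = 1$, which is the lemma. The one point requiring verification is the identification of Scheja--Storch's $\Theta$ with the isomorphism attached to $\eta_0$ rather than some unrelated Gorenstein duality; this is immediate from \cite{scheja}, where $\Theta$ is constructed out of the canonical element $\Delta \in Q \otimes_k Q$ together with the generalized trace, precisely so that $\Theta^{-1}$ is the adjoint of the pairing $(a,b) \mapsto \eta_0(a\cdot b)$. An alternative, more direct route would be to work with $\Delta$ itself: specializing the defining property $(\mathrm{id}\otimes\eta_0)(\Delta) = 1$ against the second-factor evaluation at the origin, one can check that $(\mathrm{id}\otimes\pi)(\Delta) = E$ (this uses $f_i(x)-f_i(x') = \sum_j A_{ij}(x,x')(x_j-x_j')$ with $x'=0$). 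However, this computation requires bookkeeping with dual bases, whereas the route above reuses the work already done in Lemma~\ref{Lemma: SocleElement} and is essentially a one-line specialization. I expect no substantive obstacle beyond the bookkeeping identification of $\Theta$.
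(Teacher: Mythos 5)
Your proposal is correct and follows essentially the same route as the paper: both rely on the two Scheja--Storch facts that $\Theta$ is the $Q$-module isomorphism with $\Theta(\eta_0)=1$ (equivalently $\Theta^{-1}(a)=\eta_0(a\cdot-)$) and that $\Theta(\pi)=E$, then conclude by evaluating at $1$. The only cosmetic difference is that the paper spells out the $Q$-linearity chain $\Theta(\pi)=E=E\cdot\Theta(\eta_0)=\Theta(E\cdot\eta_0)$, whereas you fold this into the characterization of $\Theta^{-1}$ directly.
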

\begin{proof}
	By the construction of $\eta_0$ in \cite{scheja}, $\eta_{0} = \Theta^{-1}(1)$ for a certain explicit $Q_{0}(f)$-linear homomorphism $\Theta \colon \operatorname{Hom}_{k}(Q_{0}(f), k) \to Q_{0}(f)$.  Furthermore, Scheja--Storch prove  $\pi = \Theta^{-1}(E)$ for $\pi \colon Q_{0}(f) \to k$ the evaluation map $\pi(a) = a(0)$ by \cite[proof of (4.7)~Korollar]{scheja}.  By linearity, we have 
	\begin{align*}	
		\Theta(\pi)		=& E \\
					=& E \cdot 1 \\
					=& E \cdot \Theta(\eta_{0}) \\
					=& \Theta( E \cdot \eta_{0}),
	\end{align*}
	so $\pi = E \cdot \eta_{0}$.  Evaluating $\pi(1)$, we deduce 
	\begin{align*}
		1 	=& (E \cdot \eta_{0})(1) \\
			=& \eta_{0}(E). 
	\end{align*}
\end{proof}

\begin{lm} \label{Lemma: FormOrthogonal}
	Let $\widetilde{Q} \otimes k(\overline{y}) = Q_1 \times Q_2$ be a decomposition into a direct sum of rings.  Then $Q_1$ is orthogonal to $Q_2$ with respect to $\widetilde{\beta} \otimes k(\overline{y})$.
\end{lm}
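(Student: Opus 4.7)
The plan is to observe that this is essentially immediate from the multiplicative definition of $\widetilde{\beta}$ once one invokes the base change property of the generalized trace. Concretely, I would begin by using the first property of $\widetilde{\eta}$ recorded in Remark~\ref{rmk_eta_comments}: since $\widetilde{Q}$ is the quotient of $P_{y}[x_1,\dots,x_n]$ by the regular sequence $f_1(x)-y_1,\dots,f_n(x)-y_n$ (Lemma~\ref{Lemma: RelativeCompleteIntersection}), the homomorphism $\widetilde{\eta}\colon \widetilde{Q}\to P_y$ base changes along $P_y\to k(\overline{y})$ to the Scheja--Storch functional $\eta_{\overline{y}}\colon \widetilde{Q}\otimes k(\overline{y})\to k(\overline{y})$ associated to the fiber. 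Consequently $\widetilde{\beta}\otimes k(\overline{y})$ is exactly the symmetric bilinear form $(a,b)\mapsto \eta_{\overline{y}}(a\cdot b)$ on $\widetilde{Q}\otimes k(\overline{y})$.

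Next I would exploit the ring-theoretic decomposition. Since the decomposition $\widetilde{Q}\otimes k(\overline{y}) = Q_1\times Q_2$ is a decomposition \emph{as rings}, the summands $Q_1$ and $Q_2$ correspond to complementary orthogonal idempotents $e_1,e_2$ with $e_1e_2=0$. For any $a\in Q_1$ and $b\in Q_2$, viewed inside $\widetilde{Q}\otimes k(\overline{y})$ as $a=e_1a$ and $b=e_2b$, one gets
\[
a\cdot b \;=\; e_1 e_2\, a\, b \;=\; 0 \quad\text{in } \widetilde{Q}\otimes k(\overline{y}).
\]
Applying $\eta_{\overline{y}}$ then yields $(\widetilde{\beta}\otimes k(\overline{y}))(a,b) = \eta_{\overline{y}}(a\cdot b) = \eta_{\overline{y}}(0) = 0$, which is precisely the claim.

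There is no real obstacle here beyond bookkeeping: the only nontrivial input is the compatibility of $\widetilde{\eta}$ with base change (so that $\widetilde{\beta}\otimes k(\overline{y})$ is still given by multiplication followed by a linear functional), which is built into Scheja--Storch's construction. Once that is in hand, orthogonality is a consequence of the annihilation $Q_1\cdot Q_2 = 0$ inherent in a product decomposition of rings.
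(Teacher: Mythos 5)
Your proof is correct and is essentially the same as the paper's: the whole content is that in a product decomposition of rings, $Q_1 \cdot Q_2 = 0$, so applying the linear functional $\widetilde{\eta} \otimes k(\overline{y})$ to any product $a_1 a_2$ gives $0$. The base-change discussion you front-load is not actually needed — the form $\widetilde{\beta}\otimes k(\overline{y})$ is by construction $(a,b)\mapsto (\widetilde{\eta}\otimes k(\overline{y}))(ab)$, so one need not identify $\widetilde{\eta}\otimes k(\overline{y})$ with the intrinsic Scheja--Storch functional of the fiber to conclude.
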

\begin{proof}
	By definition $a_1 \cdot a_2=0$ for all $a_1 \in Q_1$, $a_2 \in Q_2$, hence $(\widetilde{\eta} \otimes k(\overline{y})) (a_1 \cdot a_2)=0$.
\end{proof}

We now prove that, for $\overline{y}_{1}, \overline{y}_{2} \in \bbA^{n}_{k}(k)$, the restriction $\widetilde{\beta}$ to the fiber over $\overline{y}_{1}$ is stably isomorphic to restriction to the fiber over $\overline{y}_2$.  This result follows easily from the following form of Harder's theorem.

\begin{lm}[Harder's theorem] \label{Lemma: HarderLemma}
	Suppose that $( \widetilde{Q}, \widetilde{\beta})$ is a pair consisting of a finite rank, locally free module $\widetilde{Q}$ on $\bbA^1_k$ and a nondegenerate symmetric bilinear form $\widetilde{\beta}$ on $\widetilde{Q}$.  Then $(\widetilde{Q}, \widetilde{\beta}) \otimes k(\overline{y}_{1})$ is stably isomorphic to $(\widetilde{Q}, \widetilde{\beta}) \otimes k(\overline{y}_{2})$ for any $\overline{y}_1, \overline{y}_{2} \in \bbA^1_{k}(k)$.
\end{lm}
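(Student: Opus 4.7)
The plan is to apply Harder's theorem on the $\bbA^1$-invariance of the Grothendieck--Witt functor, which asserts that the pullback $\pi^* \colon \operatorname{GW}(k) \to \operatorname{GW}(k[t])$ along the structure map $\pi \colon \bbA^1_k \to \Spec k$ is an isomorphism.

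First I would note that since $\widetilde{Q}$ is a finite rank locally free module over the principal ideal domain $k[t]$, it is automatically free, so the pair $(\widetilde{Q}, \widetilde{\beta})$ determines a well-defined class $[(\widetilde{Q}, \widetilde{\beta})] \in \operatorname{GW}(k[t])$.  For each $k$-rational point $\overline{y} \in \bbA^1_k(k)$, the corresponding inclusion $\iota_{\overline{y}} \colon \Spec k \hookrightarrow \bbA^1_k$ induces a specialization homomorphism $\iota_{\overline{y}}^* \colon \operatorname{GW}(k[t]) \to \operatorname{GW}(k)$ that sends $[(\widetilde{Q}, \widetilde{\beta})]$ to the stable isomorphism class of the fiber $(\widetilde{Q}, \widetilde{\beta}) \otimes k(\overline{y})$.

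The key point is then that since $\pi \circ \iota_{\overline{y}} = \operatorname{id}_{\Spec k}$, the composition $\iota_{\overline{y}}^* \circ \pi^*$ is the identity on $\operatorname{GW}(k)$, so $\iota_{\overline{y}}^*$ is a retraction of $\pi^*$.  By Harder's theorem $\pi^*$ is an isomorphism, hence $\iota_{\overline{y}}^* = (\pi^*)^{-1}$, and in particular this map is independent of the choice of $\overline{y}$.  Applying this equality of homomorphisms to the class $[(\widetilde{Q}, \widetilde{\beta})]$ yields $\iota_{\overline{y}_1}^* [(\widetilde{Q}, \widetilde{\beta})] = \iota_{\overline{y}_2}^* [(\widetilde{Q}, \widetilde{\beta})]$ in $\operatorname{GW}(k)$, which is exactly the asserted stable isomorphism of fibers (equality in $\operatorname{GW}(k)$ between two nondegenerate forms whose ranks agree, automatic from local freeness of $\widetilde{Q}$ on the connected base $\bbA^1_k$).

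The main obstacle is, of course, the invocation of Harder's theorem itself; this is the substantive input, and once cited, the remainder of the argument is formal.  If one wished to avoid citing Harder's theorem as a black box, one could attempt to connect $\overline{y}_1$ and $\overline{y}_2$ by the line $\bbA^1_k$ itself and construct an explicit chain of isometries after adding hyperbolic summands, but any such strategy amounts in the end to re-proving the $\bbA^1$-invariance of $\operatorname{GW}$, so the clean way is to cite Harder directly.
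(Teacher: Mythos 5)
Your argument is formally clean, and in characteristic $\ne 2$ it is essentially the paper's argument in disguise: the paper cites \cite[Theorem~3.13]{lam05}, which \emph{is} Harder's theorem, and your retraction trick is just an abstract repackaging of the statement ``every nondegenerate symmetric bilinear form on $\bbA^1_k$ is extended from $k$.''

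The difference, and the potential gap, is in characteristic $2$. The classical form of Harder's theorem that you gesture at --- that extension of scalars induces an isomorphism, or more strongly that every form over $k[t]$ is extended from $k$ --- is \emph{false} at the level of individual forms in characteristic $2$. For instance, over $\bbF_2[t]$ the Gram matrix $\begin{pmatrix} t & 1 \\ 1 & 0 \end{pmatrix}$ defines a nondegenerate form that is not isometric to any form extended from $\bbF_2$ (its specializations at $t=0$ and $t=1$ are not isometric over $\bbF_2$). This is exactly why the paper handles characteristic $2$ separately: it invokes \cite[Remark~3.14]{lam05}, which decomposes an arbitrary form over $k[t]$ as an orthogonal sum of a form defined over $k$ and $2\times 2$ blocks of the above shape, and then verifies by an explicit $3\times 3$ matrix identity that each such block specializes to something stably isomorphic to $\bbH$. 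Your version of the lemma (``$\pi^*\colon \GW(k)\to \GW(k[t])$ is an isomorphism'') is the weaker group-level statement, and while it is likely true in all characteristics, you are not entitled to call it ``Harder's theorem'' without qualification, and you would need to supply a citation that explicitly covers symmetric bilinear forms over fields of characteristic $2$ --- many sources on $\bbA^1$-invariance of (Grothendieck--)Witt groups or Hermitian $K$-theory impose $1/2 \in R$. As written, your proof has an unsupported step in characteristic $2$, whereas the paper's proof is self-contained there. For characteristic $\ne 2$ your version is fine and even a bit slicker, since it avoids mentioning stable isomorphism until the last line.
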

\begin{proof}
	When $\operatorname{char} k \ne 2$, the stronger claim that $( \widetilde{Q}, \widetilde{\beta})$ is isomorphic to a symmetric bilinear form defined over $k$ is \cite[Theorem~3.13, Chapter~VII]{lam05b}.  When $\operatorname{char} k = 2$, the claim can be deduced from loc.~cit.~as follows.  By \cite[Remark~3.14, Chapter~VII]{lam05b}, the pair ($\widetilde{Q}, \widetilde{\beta})$ is  isomorphic to an orthogonal sum of a symmetric bilinear form defined over $k$ and a sum of symmetric bilinear forms defined by Gram matrices of the form 
\begin{equation} \label{Eqn: Char2Matrix}
	\begin{pmatrix} a(y) 	& 	1 	\\ 
				1 	&  	0
	\end{pmatrix}
\end{equation}
for $ a(y) \in k[y]$.  This last matrix equals $\mathbb{H}$ when $a(\overline{y})=0$, and otherwise the identity
\[
	\begin{pmatrix}
		a(\overline{y})	&	0	&	0	\\
		0			&	0	&	1	\\
		0			&	1	&	0
	\end{pmatrix}
	=
	\begin{pmatrix}
		1			&	1			&	0	\\
		0			&	1			&	0	\\
		a(\overline{y})	&	a(\overline{y})	&	1
	\end{pmatrix}^{\operatorname{T}}
	\cdot
		\begin{pmatrix}
		a(\overline{y})	&	0			&	0	\\
		0			&	a(\overline{y})	&	1	\\
		0			&	1			&	0
	\end{pmatrix}	\cdot 
	\begin{pmatrix}
		1			&	1			&	0	\\
		0			&	1			&	0	\\
		a(\overline{y})	&	a(\overline{y})	&	1
	\end{pmatrix}
\]
shows that the specialization is stably isomorphic to $\mathbb{H}$.
\end{proof}

\begin{co}  \label{Co: TwoFibersSame}
	The sum 
	\begin{equation} \label{Eqn: ConservedSum}
		\sum_{x \in f^{-1}(\overline{y})} w_{x}(f) \in \operatorname{GW}(k)
	\end{equation}
	is independent of $\overline{y} \in \bbA^{n}_{k}(k)$.
\end{co}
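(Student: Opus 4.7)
The plan is to leverage Harder's theorem (Lemma~\ref{Lemma: HarderLemma}) by connecting any two $k$-rational points of $\bbA^n_k$ via an affine line and restricting the family $(\widetilde{Q}, \widetilde{\beta})$ there. First, I would observe that for $\overline{y} \in \bbA^n_k(k)$, Lemma~\ref{Lemma: FormOrthogonal} gives an orthogonal decomposition
\[
\widetilde{Q} \otimes k(\overline{y}) = \bigoplus_{x \in f^{-1}(\overline{y})} Q_x(f),
\]
so that the class of the fiber form in $\operatorname{GW}(k)$ satisfies
\[
[\widetilde{\beta} \otimes k(\overline{y})] = \sum_{x \in f^{-1}(\overline{y})} w_x(f).
\]
This reduces the corollary to showing that $[\widetilde{\beta} \otimes k(\overline{y})]$ is independent of $\overline{y} \in \bbA^n_k(k)$.

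Next, given $\overline{y}_1, \overline{y}_2 \in \bbA^n_k(k)$, I would choose a $k$-rational line $\ell \colon \bbA^1_k \to \bbA^n_k$ joining them, for instance $\ell(t) = (1-t)\overline{y}_1 + t \overline{y}_2$, so that $\ell(0) = \overline{y}_1$ and $\ell(1) = \overline{y}_2$. Pulling back along $\ell$ produces a pair $(\ell^*\widetilde{Q}, \ell^*\widetilde{\beta})$ on $\bbA^1_k$. Since $\widetilde{Q}$ is a free $P_y$-module of finite rank (by Serre's problem, as used in the corollary preceding Lemma~\ref{Lemma: HarderLemma}), its pullback $\ell^*\widetilde{Q}$ is a free $k[t]$-module of finite rank. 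Moreover, $\ell^*\widetilde{\beta}$ is nondegenerate: the discriminant of $\widetilde{\beta}$ is a unit in $P_y$ (by the same corollary), and its pullback, being the discriminant of $\ell^*\widetilde{\beta}$ in any $k[t]$-basis obtained by pulling back a $P_y$-basis, remains a unit.

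Finally, applying Lemma~\ref{Lemma: HarderLemma} to $(\ell^*\widetilde{Q}, \ell^*\widetilde{\beta})$ gives that its fiber over $0$ is stably isomorphic in $\operatorname{GW}(k)$ to its fiber over $1$. Since these fibers are canonically identified with $(\widetilde{Q} \otimes k(\overline{y}_1), \widetilde{\beta} \otimes k(\overline{y}_1))$ and $(\widetilde{Q} \otimes k(\overline{y}_2), \widetilde{\beta} \otimes k(\overline{y}_2))$ respectively, we obtain
\[
\sum_{x \in f^{-1}(\overline{y}_1)} w_x(f) = \sum_{x \in f^{-1}(\overline{y}_2)} w_x(f) \quad \text{in } \operatorname{GW}(k),
\]
as desired. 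The only nontrivial step is the passage from the one-variable Harder theorem to the $n$-variable setting via restriction to a line; once nondegeneracy and freeness are seen to pull back, the result is immediate.
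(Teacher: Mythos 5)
Your proposal is correct and takes essentially the same approach as the paper: the paper's (very terse) proof also combines the orthogonal decomposition from Lemma~\ref{Lemma: FormOrthogonal}, the observation that any two $k$-points of $\bbA^n_k$ lie on a line, and Harder's theorem (Lemma~\ref{Lemma: HarderLemma}). You have simply spelled out the restriction-to-a-line step, including the (correct) checks that freeness and nondegeneracy pull back.
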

\begin{proof}
	 The sum \eqref{Eqn: ConservedSum} is the class of $\widetilde{\beta} \otimes k(\overline{y})$ by Lemma~\ref{Lemma: FormOrthogonal}, so since any two $k$-points of $\bbA^{n}_{k}$ lie on a line, the result follows from Lemma~\ref{Lemma: HarderLemma}.
\end{proof}	

\begin{rmk} \label{Remark: WhyFiniteAssumption}
	Corollary~\ref{Co: TwoFibersSame} becomes false if the hypothesis that $f$ is finite is weakened to the hypothesis that $f$ is quasi-finite (i.e.~has finite fibers).   Indeed, under this weaker assumption, Scheja--Storch construct a nondegenerate symmetric bilinear form $\beta \otimes k(\overline{y})$ on $\widetilde{Q} \otimes k(\overline{y})$ for every $\overline{y} \in \bbA^{n}_{k}(k)$, but the  class of $\sum w_{x}(f)$ can depend on $\overline{y}$.  For example, consider $k=\bbR$ (the real numbers) and $f := (x_{1}^{3} x_{2}+x_{1}-x_{1}^{3}, x_{2})$.  A computation shows
	\[
			\sum_{x \in f^{-1}(\overline{y})} w_{x}(f)				= \begin{cases}
													\langle 1 \rangle								& \text{ if $\overline{y}_{2}=1$;} \\
													\langle 1/(\overline{y}_{2}-1) \rangle+\mathbb{H} 				& \text{ otherwise,}
												\end{cases}
	\]
	so the rank of $\sum w_{x}(f)$ depends on $\overline{y}$.

The morphism $f$ fails to be finite, and we recover finiteness by passing to the restriction $f \colon f^{-1}(U) \to U$ over $U := \bbA_{k}^2 - \{ y_{2}=1 \}$.  Over $U$, the rank is constant, but the isomorphism class still depends on $\overline{y}$ because 
\[
	\text{signature of $\sum_{x \in f^{-1}(\overline{y})} w_{x}(f)$} = \begin{cases}
											+1	&	\text{ if $\overline{y}_{2}>1$;}\\
											-1	& 	\text{ if $\overline{y}_{2}<1$.}
										\end{cases}
\]
\end{rmk}

We now compute $w_{x}(f)$ when $f$ is \'etale at $x$.

\begin{lm} \label{Lemma: DescentForEtale}
	Let $f \colon \bbA^{n}_{k} \to \bbA^{n}_{k}$ be finite and $\overline{y} \in \bbA^{n}_{k}(k)$ be a $k$-rational point.  If $f$ is \'{e}tale at $x \in f^{-1}(\overline{y})$, then  $$w_{x}(f) = \Tr_{k(x)/k} \langle J(x) \rangle \text{ in $\GW(k)$.}$$ 

	
\end{lm}

Note that when $k(x)=k$, Lemma \ref{Lemma: DescentForEtale} is a consequence of Lemma \ref{Lemma: EtaSatisfiesEKL} and the equality $J = (\operatorname{rank}_{k} Q_{x}(f)) \cdot E = E$ (Remark \ref{rmkJ=rE}). We check the lemma for nontrivial extensions $k \subset k(x)$ using descent.

\begin{proof}
We show that both of these isomorphism classes of bilinear forms over $k$ are described by the following descent data:

Let $L$ be a finite Galois extension of $k$ such that $k(x)$ embeds into $L$, and let $G=\Gal(L/k)$. Let $S$ be the set 
	\[
		S = \{  \overline{x} \in \bbA^{n}_{k}(L) \colon \overline{x} (\Spec L)= \{ x\} \},
	\]
of $L$-points whose image in $\bbA^n_k$ is $x$. Define $V(S)$ to be the $L$-algebra of functions $S \to L$, with point-wise addition and  multiplication. Define $\phi \colon S \to L$ by $\phi(\overline{x}) = 1/J(\overline{x})$, and $\beta_{\phi} \colon V(S) \times V(S) \to L$ to be the bilinear form
	\begin{align*}
		\beta_{\phi}(v_1, v_2)=  \sum_{s \in S} \phi(s) v_{1}(s) \cdot v_{2}(s).
	\end{align*}
Because $J$ is a polynomial with coefficients in $k$, the map $\beta_{\phi}$ is $G$-equivariant. Thus the Galois action on $(V(S),\beta_{\phi})$ determines descent data. 

We now show that the $k$-bilinear space $w_{x}(f)=(Q_{x}(f), \beta_x)$ is isomorphic to the $k$-bilinear space determined by this descent data. To do this, it is sufficient to find a $k$-linear map $Q_{x}(f) \to V(S)$ that respects the bilinear pairings and realizes $Q_{x}(f)$ as the equalizer 
\[
	Q_{x}(f) \to V(S) \rightrightarrows \prod_{\sigma \in G} V(S).
\] There is a tautological inclusion $Q_{x}(f) \to V(S)$ because an element of $Q_{x}(f)$ is a polynomial function on $S$, and we show this inclusion has the desired properties. To see that the inclusion respects the bilinear forms, it suffices to see that the functional $V(\phi)$ restricts to the residue functional  $\eta$. To see the latter, extend scalars to $L$ and then observe that, for every summand $L$ of $Q_{x}(f)$, both $\eta$ and $\phi$ map the Jacobian element to $1$. The equalizer of $V(S) \rightrightarrows \prod_{\sigma \in G} V(S)$ is the subset of $G$-invariant functions (i.e.~functions $v \colon S \to L$ satisfying $v(\sigma s) = v(s)$ for all $\sigma \in G$, $s \in S$).  Because $S$ is finite, every function on $S$ is a polynomial function, and a polynomial function is $G$-invariant if and only if it can be represented by a polynomial with coefficients in $k$, i.e.~lies in $Q_{x}(f)$. Thus, we see that $w_{x}(f)$ is determined by the descent data on $(V(S),\beta_{\phi})$. 

It remains to show that the descent data on $(V(S),\beta_{\phi})$ also determines the $k$-bilinear form $\Tr_{k(x)/k} \langle J(x) \rangle$. The equality $\langle J \rangle = \langle 1/J \rangle$ in the Grothendieck--Witt group shows that $\Tr_{k(x)/k} \langle J(x) \rangle$ has representative bilinear form $B: k(x) \times k(x) \to k$ defined $B(x,y) = \Tr_{k(x)/k}(xy/J)$. The claim is equivalent to the statement that there is a $G$-equivariant isomorphism $ L \otimes_k k(x) \cong V(S)$ respecting the bilinear forms. 

Note that $S$ is in bijective correspondence with the set of embeddings $k(x) \hookrightarrow L$, and that we may therefore view $s$ in $S$ as a map $s: k(x) \to L$. Let $\Theta: L \otimes_k k(x) \to V(S)$ denote the $L$-linear isomorphism defined by $$\Theta (l \otimes q) (s) =l (s q).$$  By definition, $$\beta_{\phi}(\Theta(1 \otimes q_1), \Theta(1 \otimes q_1)) = \sum_{s \in S} (1/J(s)) s(q_1) s(q_2),$$ where $J(s)$ denotes the Jacobian determinant evaluated at the point $s$, and $s(q_i)$ denotes the image of $q_i$ under the embedding $k(x) \to L$ corresponding to $s$. Since $J$ is defined over $k$, we have $J(s) = s(J)$. Thus  $$\sum_{s \in S} (1/J(s)) s(q_1) s(q_2) =  \Tr_{L/k}(q_1 q_2/J) = B(q_1,q_2),$$ showing that $\Theta$ respects the appropriate bilinear forms.
\end{proof}
	
\section{Proof of the Main Theorem} \label{Section: PrOfMainThm}

We first note the case of the Main Theorem when $f$ is \'etale.

\begin{lm} \label{Lemma: etalevalue}
Let $f \colon \bbA^{n}_{k} \to \bbA^{n}_{k}$ be a polynomial function that satisfies Assumption~\ref{Assumption} and $\overline{y} \in \bbA^{n}_{k}(k)$ be a $k$-rational point.  Suppose that $f$ is \'{e}tale at $x \in f^{-1}(\overline{y})$. Then $$w_{x}(f)= \deg^{\bbA^1}_x f.$$
\end{lm}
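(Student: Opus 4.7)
The plan is to identify both sides with the trace form $\Tr_{k(x)/k}\langle J(x)\rangle$. Since Proposition~\ref{loc_degree_etale_point} already gives $\deg^{\bbA^1}_x f = \Tr_{k(x)/k}\langle J(x)\rangle$, the entire task reduces to computing $w_x(f)$ via the Galois descent presentation furnished by Lemma~\ref{Lemma: DescentForEtale}.

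Concretely, I would fix a finite Galois extension $L/k$ into which $k(x)$ embeds, set $G := \Gal(L/k)$, and let $(V(S),\beta_\phi)$ be as in Lemma~\ref{Lemma: DescentForEtale}. Choose a basepoint $s_0 \in S$, corresponding to an embedding $\iota \colon k(x) \hookrightarrow L$; its stabilizer in $G$ is $H := \Gal(L/k(x))$, so $S \cong G/H$ as $G$-sets. The descent data identifies the $k$-form of $V(S)$ with the subspace of $G$-invariants, and evaluation at $s_0$ gives a $k$-linear isomorphism $V(S)^G \cong k(x)$ (the inverse sends $a \in k(x)$ to the unique $v$ with $v(\sigma s_0) = \sigma(\iota a)$ for all $\sigma \in G$).

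The main calculation is to pull $\beta_\phi$ back along this isomorphism. Because $J$ is a polynomial with $k$-coefficients, $\phi$ is $G$-equivariant in the sense that $\phi(\sigma s_0) = \sigma(\phi(s_0)) = \sigma(1/\iota J(x))$. Hence for $v \in V(S)^G$ corresponding to $a \in k(x)$,
\[
\beta_\phi(v,v) \;=\; \sum_{\sigma \in G/H}\sigma\!\bigl(\iota(a^2/J(x))\bigr) \;=\; \Tr_{k(x)/k}\!\bigl(a^2/J(x)\bigr),
\]
exhibiting the descended bilinear space as $\Tr_{k(x)/k}\langle 1/J(x)\rangle$. I would finish by noting $\langle 1/J(x)\rangle = \langle J(x)\rangle$ in $\GW(k(x))$ (rescale the basis vector by $J(x)$), which yields $w_x(f) = \Tr_{k(x)/k}\langle J(x)\rangle = \deg^{\bbA^1}_x f$.

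I do not expect a genuine obstacle: the two deeper inputs — Proposition~\ref{loc_degree_etale_point} and Lemma~\ref{Lemma: DescentForEtale} — do all the substantive work, and what remains is routine Galois-descent bookkeeping. The only spot requiring care is the $G$-equivariance $\phi(\sigma s) = \sigma(\phi(s))$, which follows immediately from the $k$-rationality of the polynomial $J$.
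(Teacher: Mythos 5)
Your proposal is correct and follows essentially the same route as the paper's proof: both reduce via Proposition~\ref{loc_degree_etale_point} and Lemma~\ref{Lemma: DescentForEtale} to showing the descended form on $V(S)$ is $\Tr_{k(x)/k}\langle 1/J(x)\rangle$, and then use $\langle J\rangle = \langle 1/J\rangle$. The only cosmetic difference is that the paper exhibits an explicit $\Gal(L/k)$-equivariant $L$-linear isomorphism $L\otimes_k k(x)\cong V(S)$ (via $\Theta(l\otimes q)(s)=l\cdot s(q)$), whereas you pass directly to $G$-invariants $V(S)^G\cong k(x)$ by evaluating at a basepoint $s_0$; these are the two standard faces of the same descent computation.
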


\begin{proof}
Combine Lemma \ref{Lemma: DescentForEtale} and Proposition \ref{loc_degree_etale_point}. 
\end{proof}

We now use the previous results to  prove the Main Theorem.

\begin{proof}[Proof of Main Theorem]
Recall that, after possibly passing from $k$ to an odd degree field extension $L/k$ when $k$ is a finite field,  we can assume that $f$ satisfies Assumption~\ref{Assumption} by Proposition~\ref{Prop: ReduceToRegSeq_enhanced}.  This allows us to reduce to the case where the assumption is satisfied because the natural homomorphism 
\begin{equation} \label{Eqn: GWExtendScalars}
	\operatorname{GW}(k) \to \operatorname{GW}(L), w \mapsto w \otimes_{k} L
\end{equation}
is injective. Injectivity holds for a somewhat general $L/k$, but we only need the result in the simple case of a finite field in which case the result can be deduced as follows.  When $\operatorname{char} k = 2$, the only invariant of an element of $\operatorname{GW}(k)$ is its rank, so injectivity follows from the observation that extending scalars preserves the rank.  When $\operatorname{char} k \ne 2$, an element of $w \in \operatorname{GW}(k)$ is completely determined by its rank and discriminant.  Thus to show injectivity, we need prove that if $\operatorname{disc}(w \otimes_{k} L) \in (L^{\ast})^{2}$, then $\operatorname{disc}(w) \in (k^{\ast})^{2}$, and this result is e.g.~a consequence of \cite[Corollary~2.6]{lam05}.  

Since the formation of  $w_0(f)$ is compatible with field extensions and similarly with $\deg^{\bbA^1}_{0}(f)$,  we conclude that it is enough to prove the theorem  when $f$ satisfies Assumption~\ref{Assumption}.  

After possibly passing to another odd degree field extension, we can further assume that there exists $\overline{y}_0 \in \bbA^{n}_{k}(k)$ such that $f$ is \'{e}tale at every point of $x \in f^{-1}(\overline{y}_{0})$. (To see this: Since any field extension of degree prime to $p$ is separable, $f$ is \'etale at the generic point \cite[17.6.1c']{egaIV_4}. Therefore $f$ is \'etale when restricted to a Zariski neighborhood of the generic point \cite[D\'efinition 17.3.7]{egaIV_4}. Let $Z$ denote the complement of this open neighborhood. Since finite maps are closed, $f(Z)$ is a closed subset of $\bbA^n_k$ not containing the generic point. Any $k$-valued point of $\bbA_k^n - f(Z)$  has the desired property, and this subset contains $k$-points after possibly passing to an odd degree extension.) 

By these assumptions, $f$ is the restriction of $F \colon \bbP^{n}_{k} \to \bbP^{n}_{k}$, and $F$ induces a map  $\bbP(F): \bbP^n_k/\bbP^{n-1}_k \to  \bbP^n_k/\bbP^{n-1}_k$ of motivic spheres that has degree
\begin{equation}	\label{pmt:local_deg}
	\deg^{\bbA^1}( \bbP(F)) = \sum_{x \in f^{-1}(\overline{y})} \deg^{\bbA^1}_{x}(F) \text{  for any $\overline{y} \in \bbA^{n}_{k}(k)$}
\end{equation}
by the local degree formula (Proposition~\ref{pr_deg_is_sum_local_deg}).  In particular, the right-hand side is independent of $\overline{y}$ since the left-hand side is.

Analogously we proved in  Section~\ref{Section: Family} that the sum
\[
	\sum_{x \in f^{-1}(\overline{y})}w_{x}(f)  \in \operatorname{GW}(k)
\]
 is independent of $\overline{y} \in \bbA^{n}_{k}(k)$ (Corollary~\ref{Co: TwoFibersSame}).

The local terms $w(\beta_{x})$ and $\deg^{\bbA^1}_{x}(F)$ are equal when $F$ is \'{e}tale at $x$ by the Lemma~\ref{Lemma: etalevalue}.  As a consequence 
\begin{equation} \label{Eqn: SumsAreEqual}
	 \sum_{x \in f^{-1}(\overline{y})} \deg^{\bbA^1}_{x}(F)  =  \sum_{x \in f^{-1}(\overline{y})}w_{x}(f)
\end{equation}
for $\overline{y}=\overline{y}_{0}$ and hence (by independence) all $\overline{y} \in \bbA^{n}_{k}(k)$.  In particular, equality holds when $\overline{y}=0$.  By Assumption~\ref{Assumption}, the morphism $F$ is \'{e}tale at every $x \in f^{-1}(0)$ not equal to the origin, so subtracting off these terms from \eqref{Eqn: SumsAreEqual}, we get
\[
	\deg^{\bbA^1}_{0}(f) = w_{0}(f).
\]
\end{proof}

\section{Application to singularity theory} \label{Section: SingThy}
Here we use the local $\bbA^1$-degree to count singularities arithmetically, as proposed in the introduction.  We assume in this section 
\[
	\operatorname{char} k \ne 2,
\]
but see Remark~\ref{Rmk: CharTwo} for a discussion of $\operatorname{char} k =2$.

Specifically, given the equation $f \in P_{x}$ of an isolated hypersurface singularity $X \subset \bbA^{n}_{k}$ at the origin, we interpret the following invariant as a counting invariant:
\begin{df} \label{Def: NodeDefinition}
	If $f \in P_{x}$ is a polynomial such that $\operatorname{grad} f$ has an isolated zero at the closed point $x \in \bbA^{n}_{k}$, then we define $\mu^{\bbA^1}_{x}(f) := \deg^{\bbA^1}_{x}(\operatorname{grad} f)$.  When $x$ is the origin, we write $\mu^{\bbA^1}(f)$ for this class and call it the \textbf{arithmetic Milnor number} or \textbf{$\bbA^1$-Milnor number}.
\end{df}

Two remarks about this definition:
\begin{rmk}
	The condition that $\operatorname{grad} f$ has an isolated zero at  $x$ implies that the fiber of $f$ over $f(x)$ has an isolated singularity at $x$, and the converse is true in characteristic $0$ but not in characteristic $p>0$, as the example of $f(x_1, x_2) = x_1^p+x_2^2$ shows.
\end{rmk}

\begin{rmk}
	When $k=\bbC$, the arithmetic Milnor number is determined by its rank, which is the classical Milnor number $\mu(f) = \rank Q_{0}(f)$.  The classical Milnor number $\mu(f)$ is not only an invariant of the equation $f$ but  in fact is an invariant of the singularity $0 \in \Spec(P_{x}/f)$ defined by $f$.  When $k$ is arbitrary, the invariance properties of $\mu^{\bbA^1}(f)$ are more subtle, especially in characteristic $p>0$.  In particular,  the rank of $\mu^{\bbA^1}(f)$ is not an invariant of the singularity in characteristic $p>0$.  For example, $f(x) = x_1^2 + x_2^p + x_2^{p + 1}$ and $g(x) = x_1^2 + x_2^p + x_2^{2 p +1}$ both define the $A_{p-1}$ singularity  (in the sense that the completed local rings $\widehat{P}_{x}/f$,  $\widehat{P}_{x}/g$, and $\widehat{P}_{x}/x_1^2 + x_2^p$ are isomorphic), but the ranks of $w_{0}(f)$ and $w_{0}(g)$ are respectively $p$ and $2p$. For conditions that imply $\mu^{\bbA^1}(f)$ is an invariant of the singularity, see Lemma~\ref{Lemma: ContactInv}.
\end{rmk}

We now examine the arithmetic Milnor number of a node in more detail.  We define a node following  \cite[Expos\'{e}~XV]{SGA7_10_22}:
\begin{df} \label{Def: Node}
	When $k=\overline{k}$ is algebraically closed, we say that a closed point $x \in X$ of a finite type $k$-scheme is a \textbf{node} (or standard $A_1$-singularity or ordinary quadratic singularity)  if the completed local ring $\widehat{\calO}_{X, x}$ is isomorphic to a $k$-algebra of the form
	\begin{equation} \label{Eqn: StdNode}
		 \widehat{P}/x_1^2+\dots x_n^2 +\text{higher order terms.} 
	\end{equation}
	Here $\widehat{P} = k[[x_1, \dots, x_n]]$ is the power series ring over $k$.
	
	When $k$ is arbitrary, we say that $x \in X$ is a \textbf{node} if every $\widetilde{x} \in X \otimes_{k} \overline{k}$ mapping to $x$ is a node.  We say that $f \in P$ is \textbf{the equation of a node} at a closed point $x \in \bbA^{n}_{k}$ if $x \in \Spec(P/f)$ is a node.  
\end{df}
The $\bbA^1$-Milnor number of the equation of a node is the weight that appears in Equation~\ref{Eqn: XBifurcatesToNodes} from the introduction.  Indeed, if $f = u_1 x_1^2 + \dots + u_n x_n^2$, then $\operatorname{grad}(f) = (2 u_1 x_1, \dots, 2 u_n x_n)$, so
\begin{align*} 
	\mu^{\bbA^1}(f) =&	\left\langle \det \begin{pmatrix} \frac{\partial^2 f}{\partial x_i \partial x_j}(0)  \end{pmatrix} \right\rangle 	\\
	=& \langle 2^{n} u_1 \dots u_n \rangle	\\
	=& \langle  u_1 \dots u_n \rangle	\text{ if $n$ is even.}
\end{align*} 

The arithmetic Milnor number is related to an invariant studied in real enumerative geometry.  Over the real numbers, $1$-nodal curves are typically counted with weights known as Welschinger signs or weights (see e.g.~\cite{Welschinger10}).  Over $k=\bbR$, there are three different types of nodes: the split node (defined by $f=x_1^2-x_2^2$ at the origin), the nonsplit node (defined by $f=x_1^2+x_2^2)$ at the origin), and a complex conjugate pair of nodes.  The Welschinger weights of these nodes are respectively $+1$, $-1$, and $0$. The weight of a real node is exactly the negative of the signature of $\mu^{\bbA^1}(f)$.

Over an arbitrary field, the structure of a node is described by \cite[Expos\'{e}~XV, Th\'{e}or\`{e}me~1.2.6]{SGA7_10_22}.  That theorem states that, if $x \in X := \Spec(P/f)$ is a node, then $L := k(x)$ is a separable extension of $k$ and there exists a nondegenerate quadratic form $q = u_1 x_1^2+\dots+u_n x_n^2 \in L[x_1, \dots, x_n]$ and a morphism $(\Spec(L \otimes_{k} P/q), 0) \to (X, x)$ of pointed $k$-schemes that induces an isomorphism on henselizations.

(Note: in loc.~cit.~the result is stated with $L/k$ the maximal separable subextension of $k(x)/k$, but this subextension is $k(x)/k$ because we have assumed $\operatorname{char} k \ne 2$.)

We can use this description of nodes to describe the arithmetic Milnor number of  a node.
\begin{lm} \label{Lemma: ContactInv}
	Assume  $n$ is even.  Suppose that $L/k$ is a separable field extension and 
	\begin{align*}
		x \in X =& \Spec(L \otimes_{k} P_{x}/f),   \\
		y \in Y =& \Spec(P_{y}/g)
	\end{align*}	
	 are nodes and $(X, x) \to (Y, y)$ is a morphism of pointed $k$-schemes that induces an isomorphism on henselizations.  Then 
	\begin{align} \label{Eqn: ExplicitArithmeticSign}
		\mu^{\bbA^1}_{y}(g) 	=&	\operatorname{Tr}_{L/k}( \left \langle  \frac{\partial^{2} f }{\partial x_i \partial x_j}( x) \right \rangle)			
	\end{align}
\end{lm}
\begin{proof}
	By Proposition~\ref{loc_degree_etale_point}, we have 
	\[		
		\mu^{\bbA^1}_{y}(g) 	=	\operatorname{Tr}_{L/k}( \left \langle  \frac{\partial^{2} g }{\partial x_i \partial x_j}(y) \right \rangle),
	\]
 so it is enough to prove that the Hessian of $f$ differs from the Hessian of $g$ by a perfect square.  Say that $(X, x) \to (Y, y)$ is induced by the ring map defined by   $y_1 \mapsto a_1, \dots, y_n \mapsto a_{n}$.  The elements $a_1, \dots, a_n$ must satisfy 
	\begin{equation} \label{Eqn: ContactEquivalence}
		f= u \cdot g(a_1, \dots, a_n)  \text{ for some $u \in \calO_{X, x}^{h}$}
	\end{equation}
	
	Computing the Hessian of $f$ using \eqref{Eqn: ContactEquivalence}, we deduce
	\[
		\det \begin{pmatrix} \frac{\partial^{2} f}{\partial x_i \partial x_j}(x) \end{pmatrix} = u(x)^n \cdot \det \begin{pmatrix} \frac{\partial^{2} a}{\partial x_i \partial x_j}(x) \end{pmatrix}^2 \cdot  \det \begin{pmatrix} \frac{\partial^{2} g}{\partial y_i \partial y_j}(y) \end{pmatrix}
	\]
	Since $n$ is even, this last equation shows that the two Hessians differ by a perfect square.
\end{proof}
\begin{rmk}
	For $\mu^{\bbA^1}_{x}(f)$ to be an invariant of the pointed $k$-scheme $x \in X$, it is essential that $n$ is even.  For example, when $n$ is odd, consider the equation $f = x_1^2 + \dots x_n^2$ and note that both $f$ and $-f$ define the standard node at the origin, but $\mu^{\bbA^1}(f)= \langle 2^n \rangle$, $\mu^{\bbA^1}(-f) = \langle -2^n \rangle$.  These two classes are equal only when $-1$ is a perfect square. For odd $n$, we get an invariant of the {\em equation} determining the pointed $k$-hypersurface.
\end{rmk}

While $\mu^{\bbA^1}_{x}(f)$ is an invariant of a node when $n$ is even, it does not, in general, determine the isomorphism class, as the following example shows. 

\begin{expl}
	The equations $f=x^2+y^2+z^2+w^2 \in \bbR[w, x, y, z]$ and $g=x^2+y^2-z^2-w^2 \in \bbR[w, x, y, z]$ both define a node at the origin with arithmetic Milnor number $\langle +1 \rangle$. The algebras $\bbR[[w, x, y, z]]/f$ and $\bbR[[w, x, y, z]]/g$ are not isomorphic because~e.g. the projectivized tangent cones are not isomorphic (as $\operatorname{Proj}\bbR[W, X, Y, Z]/  X^2+Y^2-Z^2-W^2$ has infinitely many $\bbR$-points, but $\operatorname{Proj}\bbR[W, X, Y, Z]/ X^2+Y^2+Z^2+W^2$ has none).
\end{expl}

We now  identify $\mu^{\bbA^1}(f)$ as a count of nodes.  Recall that we wish to identify  $\mu^{\bbA^1}_{x}(f)$ with a count of the nodal fibers of the family 
\[
	f(x) -a_1 x_1 - \dots - a_n x_n= t \text{ over the $t$-line}
\]
for $a_1, \dots, a_n \in k$ sufficiently general. In showing this, an essential point is to show that, for $y \in \bbA^{n}_{k}(k)$, the sum of the local degrees $\sum_{f(x)=y} \deg_{y}^{\bbA^1}(\operatorname{grad} f)$ is independent of $x$.  When $\operatorname{grad}(f)$ extends to a suitable morphism $\bbP^{n}_{k} \to \bbP^{n}_{k}$, this is Proposition~\ref{pr_deg_is_sum_local_deg}, but requiring the map to extend is a restrictive condition that fails to be satisfied in important basic examples such as $f = x_1^2+x_2^{n}$.  

We will instead deduce  independence from Corollary~\ref{Co: TwoFibersSame}.  In order to apply that corollary, we need to interpret $\mu^{\bbA^1}_{x}(f)$ in terms of the bilinear pairing $\widetilde{\beta}$.  We have done this when $k(x)=k$ and when $f$ is the equation of a node at $x$ but not in general.  The following lemma is stated so that we only need to consider singularities of this type, allowing  us to avoid a lengthy technical discussion of the relation between $\widetilde{\beta}$ and $\deg^{\bbA^1}_{x}(\operatorname{grad} f)$  when $k(x)$ is a nontrivial extension of $k$.  



\begin{lm} \label{Lemma: SingularityCountMostGeneral}
	Let $f \in P$ be such that 
	\[
		\operatorname{grad}(f) \colon \bbA^{n}_{k} \to \bbA^{n}_{k}
	\]
	is a finite morphism. Assume every zero of $\operatorname{grad}(f)$ either has residue field $k$ or is in the \'{e}tale locus of $\operatorname{grad} f$ and similarly with $\operatorname{grad} (f - a_1 x_1 - \dots - a_n x_n)$. Then we have 
	\begin{align}	
		\sum \mu^{\bbA^1}_{x}(f) = &		\sum \mu^{\bbA^1}_{x}(f-a_1 x_1 - \dots -a_n x_n). \label{Eqn: ConstantSum}
	\end{align}
	for any $(a_1, \dots, a_n) \in \bbA^{n}_{k}(k)$.  Here both sums run over all zeros of the relevant gradient.
\end{lm}
\begin{proof}
	Observe that the zeros of $\operatorname{grad}(f - a_1 x_1 - \dots - a_n x_n)$ are exactly the points in the preimage of $(a_1, \dots, a_n)$ under $\operatorname{grad}(f)$.  Furthermore, $\mu^{\bbA^1}_{x}(f-a_1 x_1 - \dots -a_n x_n) = \deg_{x}( \operatorname{grad} f)$.  Thus the left-hand side of \eqref{Eqn: ConstantSum} is the sum of $\deg_{x}^{\bbA^1}(\operatorname{grad} f)$ for $x$ in the preimage of $x$, and the right-hand side is the analogous sum over the preimage of $(a_1, \dots, a_n)$.  By the Proposition~\ref{loc_degree_etale_point} and the Main Theorem, we have $\mu_{x}^{\bbA^1}(f) = w_{x}$, so the result is Corollary~\ref{Co: TwoFibersSame}.
\end{proof}

\begin{lm} \label{Lemma: NodalFamilyExists}
		Let $f \in P$ be such that 
	\[
		\operatorname{grad}(f) \colon \bbA^{n}_{k} \to \bbA^{n}_{k}
	\]
	is a finite, separable morphism.  Then there exists a nonempty Zariski open subset  $U \subset\bbA^{n}_{k}$ such that, for all $(a_1, \dots, a_n) \in \bbA^{n}_{k}$, the preimage of $0$ under
	\begin{gather} \label{Eqn: BifurcatedFamily}
		\operatorname{grad}(f(x) - a_1 x_1 - \dots - a_n x_n) \colon \bbA^{n}_{k} \to \bbA^{n}_{k}
	\end{gather}
	 is \'{e}tale over $k$.
\end{lm}
\begin{proof}
	Observe that since $\operatorname{grad} f$ is separable,  the locus of points  $V \subset \bbA^n_{k}$ where $\operatorname{grad} f$ is \'{e}tale  contains the generic point of $\bbA^n_k$ and hence is a nonempty Zariski open subset.  The subset $\operatorname{grad} f(\bbA^{n}_{k} - V) \subset \bbA^{n}_{k}$ is closed because $\operatorname{grad} f$ is proper and so the complement of $\operatorname{grad} f(\bbA^{n}_{k} - V)$ has  the desired properties.
\end{proof}

\begin{lm} \label{Lemma: RecognizeNode}
Let $f \in P$ is given.  If $f(x) = \operatorname{grad}(f)(x)=0$ and $\operatorname{grad}(f)$ is \'{e}tale at $x$, then  $x \in \Spec(P/f)$ is a node.
\end{lm}
\begin{proof}
	By the definition of a node, we can reduce to the case where $k=\overline{k}$ and, after possibly making a linear change of coordinates, we can assume $x=0$ is the origin.  Write
	$f(x) = \sum a_{\underline{i}} x_1^{i_1} \dots x_{n}^{i_n}$.  Since $f(x) = \operatorname{grad}f(x) =0$, all terms of degree at most $1$ must vanish.  Since $\operatorname{grad}(f)$ is \'{e}tale at $x$, the determinant of the matrix defined by the degree $2$ terms (i.e.~the Hessian) must be nonzero.  We conclude that, after a further linear change of variables (diagonalize the quadratic form), the given equation can be written as
	\[
		f(x) = x_1^2 + \dots +x_n^2 + \text{higher order terms,}
	\]
	showing $x \in \Spec(P/f)$ is a node.
\end{proof}

Combining  the previous  lemmas provides us with the desired interpretation of $\mu^{\bbA^1}(f)$ as a count of nodal fibers.

\begin{co} \label{Corollary: SingularityCount}
	Let $n$ be even and $f \in P$ such that $\operatorname{grad}(f)$ is finite and separable.  Then for $(a_1, \dots, a_n) \in \bbA^{n}_{k}(k)$ a general $k$-point, the family 
	\begin{gather} \label{Eqn: BifurcateFamily}
		\bbA^{n}_{k} \to \bbA^{1}_{k},  \\
		x \mapsto f(x) - a_1 x_1 - \dots a_n x_n 	\nonumber
	\end{gather}
	has only nodal fibers.
	
	Assume every zero of $\operatorname{grad}(f)$ either has residue field $k$ or is in the \'{e}tale locus of $\operatorname{grad} f$.  Then the nodal fibers of \eqref{Eqn: BifurcateFamily} have the  property that:
	\begin{gather} \label{Eqn: SingularityCount}
	\sum \mu^{\bbA^1}_x(f) \\ 			\nonumber
	= \\	 
	\sum \#\left(\text{fibers of \eqref{Eqn: BifurcateFamily} with henselization $0 \in \Spec( \frac{L[x_1, \dots, x_n]}{u_1 x_1^2+\dots+u_n x_n^2})$}\right) \cdot \operatorname{Tr}_{L/k}( \left\langle u_1 \dots u_n \right\rangle) 				\nonumber
	\end{gather} 
	Here we say that $x \in f^{-1}(y)$ has henselization $0 \in \Spec( \frac{L[x_1, \dots, x_n]}{u_1 x_1^2+\dots+u_n x_n^2})$ if the pointed $k$-schemes have isomorphic  henselizations, and we take  the second sum to run over the isomorphism classes of henselizations.
\end{co}
\begin{proof}
	By Lemma~\ref{Lemma: NodalFamilyExists}, a general $k$-point $(a_1, \dots, a_n)$ (i.e.~a $k$-point of nonempty open subscheme of $\bbA^{1}_{k}$) has the property that $\operatorname{grad}(f-a_1 x_1 - \dots -a_n x_n)$ is \'{e}tale at every zero.  We will prove that such a point satisfies the desired conditions.
	
	For this choice of $(a_1, \dots, a_n)$ the family \eqref{Eqn: BifurcateFamily}  has only nodal fibers  by Lemma~\ref{Lemma: RecognizeNode}.  Furthermore, the terms on the right-side of \eqref{Eqn: SingularityCount} are the arithmetic Milnor numbers of the nodal fibers of \eqref{Eqn: BifurcateFamily} by Equation~\eqref{Eqn: ExplicitArithmeticSign}.  Thus that sum is the sum of $\mu_{x}^{\bbA^1}(f - a_1 x_1 - \dots a_n x_n)$ as $x$ runs over the zeros of the gradient, so  \eqref{Eqn: SingularityCount} is a special case of Lemma~\ref{Lemma: SingularityCountMostGeneral}.
\end{proof}

Let us illustrate the content of  Corollary~\ref{Corollary: SingularityCount} with the example of the cusp (or $A_2$) singularity discussed at the end of the introduction.  The polynomial $f = x_1^2+x_2^3$ satisfies the hypotheses of the corollary.  Furthermore the origin is the only zero of $\operatorname{grad}(f)$, and from Table~\ref{Table: ADEsing}, we see that 
$\mu^{\bbA^1}(f) = \mathbb{H}$.  Thus if $(a_1, a_2)$ are chosen so that  \eqref{Eqn: BifurcateFamily} has two $k$-rational nodal fibers  $\{ x_1^2 + u_1 x_2^2=0\}$
and  $\{ x_1^2 + u_2 x_2^2=0\}$, then 
\[
	\mathbb{H} = \langle u_1 \rangle + \langle u_2 \rangle.
\]
Suppose we further specialize to the case $k=\bbQ_5$.  An inspection of discriminants shows that the family cannot contain, for example, the nodes  $\{ x_1^2 + x_2^2 = 0 \}$ and $\{ x_1^2+ 2 \cdot x_2^2=0\}$.  

More generally, there are, up to henselization, four  nodes over $k=\bbQ_5$ with residue field $k$, namely 
\begin{equation} \label{Eqn: 5adicNodes}
	x_1^2 + x_2^2, x_1^2 + 2 \cdot x_2^2, x_1^2 + 5 \cdot x_2^2,\text{ and } x_1^2 + 5 \cdot 2 \cdot x_2^2.
\end{equation}
 If the cusp bifurcates to two of these nodes, then, by Corollary~\ref{Corollary: SingularityCount}, the two nodes must be isomorphic (take the discriminent).  There are no further obstructions:  the family $x \mapsto f(x) + -u^2/3 \cdot x_2$ contains two nodes, each of  which is isomorphic to $\{ x_1^2 + u \cdot x_2^2=0\}$.\hidden{The family $-x_1^2 = g(x_2) = x_2^3 - u^2/3x_2 -t$ has a node when $t$ is chosen so that the solution to $g'(x_2) =  3x_2^2 -u^3/3=0$ is a zero of $g$. Thus there is a node at $x_2 = u/3$, $t=t_0 = -2u^3/27$, $x_1 = 0$. For this value of $t$, the fiber of the family is $-x_1^2 = (x_2 - u/3)^2(x_2+ 2u/3)$, which has the node $-x_1^2 = u(x_2-u/3)^2$.} 
 
 There are also more complicated possibilities for the nodal fibers.   For example, the only singular fiber of  $x \mapsto f(x) + 3 \cdot 5 \cdot x_2$ is the fiber over the closed point $(t^2 + 4 \cdot 5^3)$, a closed point with residue field a nontrivial extension of $k$.\hidden{The family $-x_1^2 = g(x_2) = x_2^3 + 3 \cdot 5 \cdot x_2 -t$ has a node when $t$ is chosen so that the solution to $g'(x_2) =  3x_2^2 + 3 \cdot 5 =0$ is a zero of $g$. Thus there is a node at $x_2^2 = -5$, and $ t= x_2( 2 \cdot 5)$.}  Additional examples describing the collections of nodes that a singularity can bifurcate to can be found in \cite{oberwolffach}.  

\begin{rmk} \label{Rmk: CharTwo}
	We conclude with a remark about the assumption that $\operatorname{char} k  \ne 2$.  When $\operatorname{char} k = 2$, Definition~\ref{Def: NodeDefinition} should not be taken as the definition of a node because  $x_1^2 + \dots + x_n^2 = (x_1+\dots+x_n)^2$ does not define an isolated singularity.  Instead the polynomial $x_1^2+\dots+x_n^2$ should be replaced by
	\[
		f(x) = \begin{cases}
					x_{1}^2 + x_{2} x_{3} + \dots + x_{x-1} x_{n} & \text{$n$ odd;} \\
					x_{1}^{2} + x_{1} x_{2} + \dots + x_{n-1} x_{n} & \text{$n$ even.}
			\end{cases}		
	\]
	Using this last equation, we can define nodes as before, although their classification becomes more complicated (see  \cite[Expos\'{e}~XV]{SGA7_10_22} for details).  
	
	The arithmetic Milnor number can be defined as in odd characteristic, but then it is not a very interesting invariant.  For example, consider the node that is defined by $f(x) = x_1^2 + x_1 x_2 + u x_2^2$ for $u \in k$.  The gradient function $\operatorname{grad} f(x)=(x_2, x_1)$ does not depend on $u$, so $\mu^{\bbA^1}(f)$, and any other invariant obtained from the gradient, does not depend on $u$.  The isomorphism class of the node does depend on $u$: the isomorphism class is classified by the image of $u$ in $k/\{ v^2 + v \colon v \in k \}$.
	
\end{rmk}

\section{Application to cubic surfaces}  \label{Section: CubicSurface}
Here we explain how the $\mathbb{A}^{1}$-degree can be used to arithmetically count the lines on a cubic surface.  In \cite{kass17}, we proved that the lines on a \emph{smooth} cubic surface satisfy
	\begin{equation} \label{Eqn: FinalLineCount}
			\sum_{d \in L^{\ast}/(L^{\ast})^{2}} ( \#\text{lines of type $d$}) \cdot \operatorname{Tr}_{L/k}( \langle d \rangle) 
			= 
				15 \cdot \langle 1 \rangle + 12 \cdot \langle -1 \rangle.
	\end{equation}
The type of a line can be interpreted in several ways, and one interpretation is that it is the local $\mathbb{A}^{1}$-degree (or index) of a global section $\sigma$ of a vector bundle defined using the cubic surface. The global section $\sigma$ has only simple zeros for smooth cubic surfaces, so the main result of this paper is not needed to prove \eqref{Eqn: FinalLineCount}.  The main result can, however, be used to extend that equation to certain singular surfaces, as we now explain.


The lines on a cubic surface, smooth or not, are always the zeros of a global section $\sigma$.  When the cubic surface is nonruled, the zeros are isolated but, when the surface is singular, possibly nonsimple.  For a nonruled singular cubic surface, Equation~\eqref{Eqn: FinalLineCount} remains valid provided the type is defined to be the local index of $\sigma$.  With this definition, the type of a line can be effectively computed using the main result of this paper.  


For example, consider the cubic surface defined by $x_1^2 x_4+x_2^3+x_3^3$ over a field of characteristic $0$.  This equation is one of the normal forms of a cubic surface with a $D_4$ singularity.  The surface contains the line parameterized by $[S, T] \mapsto [0, -S, S, T]$.  One computes that the type of this line is the local $\mathbb{A}^{1}$-degree at the origin of the polynomial function $f \colon \mathbb{A}_{k}^{4} \to \mathbb{A}^{4}_{k}$ defined by $(a, b, c, d) \mapsto (c^3-3 c^2+3 c, a^2+3 c^2 d-6 c d+3 d, 2 a b+3 c d^2-3 d^2, b^2+d^3)$.  The authors computed the local $\bbA^1$-degree of this function by implementing the method in Table~\ref{Table: HowToCompute} in Mathematica (Version 10.0.2).  With respect to the lexicographical ordering, we have that
\begin{gather*}
	(d^4,c,b d^2,b^2+d^3,a d^2+2 b d,2 a b-3 d^2,a^2+3 d) \text{ is a Gr\"{o}bner basis, }\\
	1,d,d^2,d^3,b,b d,a,a d \text{ is a $k$-basis for $Q_{0}(f)$, and } E = -9 d^3/2.
\end{gather*}
With respect to the exhibited $k$-basis, $w_0(f)$ is represented by the Gram matrix
\[
\left(
\begin{array}{cccccccc}
 0 & 0 & 0 & -\frac{2}{9} & 0 & 0 & 0 & 0 \\
 0 & 0 & -\frac{2}{9} & 0 & 0 & 0 & 0 & 0 \\
 0 & -\frac{2}{9} & 0 & 0 & 0 & 0 & 0 & 0 \\
 -\frac{2}{9} & 0 & 0 & 0 & 0 & 0 & 0 & 0 \\
 0 & 0 & 0 & 0 & \frac{2}{9} & 0 & 0 & -\frac{1}{3} \\
 0 & 0 & 0 & 0 & 0 & 0 & -\frac{1}{3} & 0 \\
 0 & 0 & 0 & 0 & 0 & -\frac{1}{3} & 0 & 0 \\
 0 & 0 & 0 & 0 & -\frac{1}{3} & 0 & 0 & \frac{2}{3} \\
\end{array}
\right).
\]
This class is $\langle 2, 6 \rangle+3 \cdot \mathbb{H}$. (To see this, replace $a d$ with $3 b/2+a d$ in the basis, and the matrix becomes block diagonal.)

\section*{Erratum}

Lemma~\ref{Lemma: WhenFormsEqual} is false as stated. There are two issues. One is that the common element $\phi_1(E) = \phi_2(E)$ should be assumed to be nonzero. The other concerns the case when the characteristic of $k$ is $2$. In that case, the conclusion that $\beta_{\phi_1}$ is isomorphic to $\beta_{\phi_2}$ should be weakened to the statement that these two forms are stably isomorphic. (When the characteristic of $k$ is not $2$, the notions of {\em stably isomorphic} and {\em isomorphic} coincide \cite[Witt's theorem (4.4)]{milnor73}.) Lemma~\ref{Lemma: WhenFormsEqual} is essentially a restatement of \cite[Proposition 3.4 and 3.5]{eisenbud77}, except \cite[Proposition 3.5]{eisenbud77} includes the hypotheses that $\phi_1(E)$ is not zero and the characteristic of $k$ is not $2$. A counterexample in characteristic $2$ is given as follows.

\begin{expl}
Let $k=\bbF_2$. Let $f$ in $k[x]$ be $f=x^2$. Then $Q_0(f) = \frac{k[x]}{(x^2)}$. The socle $E$ is generated by $E_0(f)=x$. Let $\phi_1, \phi_2: Q_0(f) \to k$ be the $k$-linear functions satisfying $\phi_1(x) = \phi_2(x)=1$, $\phi_1(1) = 0$ and $\phi_2(1) = 1$. Then the associated bilinear forms $\beta_{\phi_1}$ and $\beta_{\phi_2}$ have Gram matrices
\[
\beta_{\phi_1} = \begin{pmatrix} 0 & 1 \\ 1 & 0 \end{pmatrix}\quad\quad \beta_{\phi_2} = \begin{pmatrix} 1 & 1 \\ 1 & 0\end{pmatrix}
\] with respect to the $k$-basis $\{1,x\}$ of  $Q_0(f)$. These forms are not isomorphic because $\beta_{\phi_1}(y,y) = 0$ for all $y$ in $Q_0(f)$, but this is not true for $\beta_{\phi_2}$. See \cite[p. 9]{milnor73}.

\end{expl}

Lemma~\ref{Lemma: WhenFormsEqual} is replaced by the following.

\begin{lm}\label{Lemma: WhenFormsEqual-fixed}



If $\phi_1$ and $\phi_2$ are $k$-linear functions satisfying $\phi_1(E) = \phi_2(E) \text{ in $k^*/(k^{\ast})^{2}$}$, then $\beta_{\phi_{1}}$ is stably isomorphic to $\beta_{\phi_{2}}$ and $\beta_{\phi_1}$ and $\beta_{\phi_2}$ are nondegenerate.
\end{lm}

\begin{proof}
We follow \cite[proof of Proposition 3.5]{eisenbud77} with the necessary modifications. Let $Q=Q_0(f)$. By \cite[Proposition 3.1]{eisenbud77}, both $\phi_1$ and $\phi_2$ generate $Q^*$. Thus there exists a unit $u$ of $Q$ such that $\phi_2 = u \phi_1$. Let $\overline{u}$ be the image of $u$ in $Q/\mathfrak{m}_0$. Then \begin{align*} 
\phi_2 (E) &= (u\phi_1)(E) \\
&= \phi_1(uE) \\
&=\phi_1(\overline{u}E)\\
&=\overline{u} \phi_1(E).
\end{align*}
Since $\phi_1(E) = \phi_2(E)$ we have that $\overline{u}=1$. Thus $u = 1 + n$ for $n$ nilpotent.

Define $\ell: Q[t] \to k[t]$ to be the $k[t]$-linear map given by $\ell = \phi_1 \otimes_k k[t]$. Then define $\ell_t : Q[t] \to k[t]$ to be the $k[t]$-linear map given by $$\ell_t = (1+t n)\ell .$$ Let $\beta_t: Q[t] \times Q[t] \to k[t]$ denote the bilinear form $\beta_t(a_1,a_2) := \ell_t(a_1 \cdot a_2)$. Observe that the specializations $t=0$ and $t=1$ are $\beta_{\phi_1}$ and $\beta_{\phi_2}$ respectively. By Harder's theorem (Lemma~\ref{Lemma: HarderLemma}), it thus suffices to show that $\beta_t$ is nondegenerate.


$\beta_t$ can be pulled back along any closed point $t_0:\Spec L \to \Spec k[t]$, producing a bilinear form $\beta_{t_0}: (Q\otimes_k L) \times (Q\otimes_k L) \to L$. It suffices to show that $\beta_{t_0}$ is nondegenerate for all $t_0$. Since $\ell_{t_0}(E) = \phi_1((1+ t_0 n)E) = \phi_1(E) \neq 0$, this follows from  \cite[Proposition 3.4]{eisenbud77}.

\end{proof}
	
Thus $\beta_{\phi_{1}}$ and $\beta_{\phi_{2}}$ determine the same element of $\GW(k)$ and the other results of this paper are true as stated. 	

\section{Acknowledgements}
We would like to thank David Eisenbud, Mike Hopkins, Marc Hoyois, Remy van Dobben de Bruyn, and C.~T.~C.~Wall for useful discussions and correspondence. We also thank Joseph Knight, Ashvin Swaminathan, and Dennis Tseng for pointing out the error in Lemma~\ref{Lemma: WhenFormsEqual}.

Jesse Leo Kass was partially sponsored by the National Security Agency under Grant Number H98230-15-1-0264.  The United States Government is authorized to reproduce and distribute reprints notwithstanding any copyright notation herein. This manuscript is submitted for publication with the understanding that the United States Government is authorized to reproduce and distribute reprints.

Kirsten Wickelgren was partially supported by National Science Foundation Award DMS-1406380 and DMS-1552730.

}



\end{document}